\numberwithin{equation}{section}
\newtheorem{thm}{\bf Theorem}[section]
\newtheorem{lem}[thm]{\bf Lemma}
\newtheorem{conj}[thm]{\bf Conjecture}
\newtheorem{cor}[thm]{\bf Corollary}
\newtheorem{prop}[thm]{\bf Proposition}
\theoremstyle{definition}
\newtheorem{defn}[thm]{Definition}
\newtheorem{constr}[thm]{Construction}
\newtheorem{ex}[thm]{Example}
\newtheorem{notn}[thm]{Notation}
\newtheorem{rem}[thm]{Remark}
\newtheorem*{thm*}{Theorem}
\DeclareMathOperator{\Tor}{Tor}
\DeclareMathOperator{\reg}{reg}
\DeclareMathOperator{\Inc}{Inc}
\DeclareMathOperator{\ind}{ind}
\DeclareMathOperator{\indmatch}{indmatch}
\DeclareMathOperator{\Sym}{Sym}
\DeclareMathOperator{\Msupp}{Msupp}
\newcommand{\Z}{{\mathbb Z}}
\newcommand{\N}{{\mathbb N}}
\newcommand{\R}{{\mathbb R}}
\def\Icc{{\mathcal I}}
\def\Jcc{{\mathcal J}}
\newcommand{\kk}{\Bbbk}
\begin{document}

\title[Asymptotic regularity of invariant chains]{Asymptotic regularity of invariant chains \\ of edge ideals}

\author[D.T. Hoang]{Do Trong Hoang}
\address{School of Applied Mathematics and Informatics, Hanoi University of Science and Technology, 1 Dai Co Viet, Hai Ba Trung, Hanoi, Vietnam}
\email{hoang.dotrong@hust.edu.vn}
\author[H.D. Nguyen]{Hop D. Nguyen}
\address{Institute of Mathematics, Vietnam Academy of Science and Technology, 18 Hoang Quoc Viet, 10307 Hanoi, Vietnam}
\email{ngdhop@gmail.com}
\author[Q.H. Tran]{Quang Hoa Tran}
\address{University of Education, Hue University, 34 Le Loi St., Hue City, Viet Nam}
\email{tranquanghoa@hueuni.edu.vn}

\begin{abstract}
We study chains of nonzero edge ideals that are invariant under the action of the monoid $\Inc$ of increasing functions on the positive integers. We prove that the sequence of Castelnuovo--Mumford regularity of ideals in such a chain is eventually constant with limit either 2 or 3, and we determine explicitly when the constancy behaviour sets in. This provides further evidence to a conjecture on the asymptotic linearity of the regularity of $\Inc$-invariant chains of homogeneous ideals. The proofs reveal unexpected combinatorial properties of $\Inc$-invariant chains of edge ideals.
\end{abstract}

\makeatletter
\@namedef{subjclassname@2020}{%
	\textup{2020} Mathematics Subject Classification}
\makeatother

\keywords{Invariant ideal, Castelnuovo--Mumford regularity, edge ideal, induced matching, asymptotic behaviour}
\subjclass[2020]{13A50, 13C15, 13D02, 13F20, 16P70, 16W22}

\maketitle

\section{Introduction}

Let $\kk$ be a field, $R_n=\kk[x_1,\ldots,x_n]$ a polynomial ring in $n$ variables, and $R=\bigcup_{n\ge 1} R_n=\kk[x_1,x_2,\ldots]$ be the polynomial ring in infinitely many variables. Denote by $\N$ the set of positive integers, and $\Inc$ the monoid of strictly increasing functions $\pi: \N \to \N$, with the operation being composition of maps. Then $\Inc$ acts on $R$ by endomorphisms via the rule
\[
\pi(x_i)=x_{\pi(i)}, \quad \text{for each $i\in \N$}.
\]
A family of ideals $\Icc=(I_n)_{n\ge 1}$, where $I_n\subseteq R_n$, is called an \emph{$\Inc$-invariant chain} if for all $1\le m < n$ and for all $\pi \in \Inc$ such that $\pi(m)\le n$, we have $\pi(I_m)\subseteq I_n$. A related notion is that of $\Inc$-invariant ideals: An ideal $I\subseteq R$ is called $\Inc$-invariant if $\pi(I)\subseteq I$ for any $\pi \in \Inc$. Recently, there was considerable interest in $\Inc$-invariant chains and $\Inc$-invariant ideals, due to several reasons. First, $\Inc$-invariant chains and ideals appear in various places. For instance, let $\Sym(n)$ be the symmetric group on $n$ objects and $\Sym=\bigcup_{n\ge 1} \Sym(n)$ the direct limit of these groups. Then any $\Sym$-invariant ideal $I\subseteq R$ is also $\Inc$-invariant \cite{NR17}. Let $\Icc=(I_n)_{n\ge 1}$ be any $\Sym$-invariant chain, i.e.\,each $I_n$ is $\Sym(n)$-invariant and $\pi(I_m)\subseteq I_n$ whenever $1\le m < n, \pi \in \Sym(n)$. Then any such $\Sym$-invariant chain is also $\Inc$-invariant. Special instances of $\Sym$-invariant ideals are abound in representation theory \cite{CEF15, SS17} and algebraic statistics \cite{AH07, HS12}. Second, $\Inc$-invariant chains and ideals enjoy remarkable finiteness properties due to their highly symmetrical nature. By work of Cohen \cite{Co87}, Aschenbrenner--Hillar \cite{AH07}, Hillar--Sullivant \cite{HS12}, Nagel--R\"omer \cite{NR17}, it is known that $R$ is $\Inc$-noetherian, in particular any $\Inc$-invariant ideal $I\subseteq R$ is generated by finitely many $\Inc$-orbits. Moreover, any $\Inc$-invariant chain of ideals stabilizes, namely for integers $1\le m\le n$, let 
$$
\Inc_{m,n}=\{\pi\in \Inc \mid \pi(m)\le n\},
$$
then for any $\Inc$-invariant chain $\Icc=(I_n)_{n\ge 1}$, there exists some finite $r\ge 1$ such that $I_n =\Inc_{r,n}(I_r)R_n$ for all $n\ge r$. We call the smallest such $r$ the \emph{stability index} of $\Icc$, denoted $\ind(\Icc)$.

The present work can be seen as the continuation of an active program aiming at studying asymptotic properties of $\Inc$- and $\Sym$-invariant chains of ideals; see, for example \cite{DEF,GN18,KLR22, LNNR2, MR22,NR17} for various aspects of this program.   In \cite[Conjecture 4.12]{LNNR2}, Le, Nagel, the second author, and R\"omer proposed 
\begin{conj}
\label{conj_regularity}
Let $\Icc=(I_n)_{n\ge 1}$ be an $\Inc$-invariant chain of homogeneous ideals. Then for all $n\gg 0$, $\reg I_n=Cn+D$ for certain constants $C$ and $D$ depending only on $\Icc$. Here $\reg (-)$ denotes the Castelnuovo--Mumford regularity.
\end{conj}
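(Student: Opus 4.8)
Degenerate cases ($I_n=0$ or $I_n=R_n$ for all large $n$) being trivial, assume $I_n$ is a proper nonzero ideal for all $n\ge r:=\ind(\Icc)$. Then $\reg I_n=\reg(R_n/I_n)+1$, so it suffices to prove that $q(n):=\reg(R_n/I_n)$ agrees with a linear function for $n\gg 0$. By the stabilization theorem, $I_n=\langle \Inc_{r,n}(I_r)\rangle R_n$ for $n\ge r$; in particular every $I_n$ is generated in degrees at most $D_0:=\max\{\deg f : f \text{ a minimal generator of } I_r\}$, a constant depending only on $\Icc$. I would attack the conjecture in three steps: (i) a linear upper bound $q(n)\le An+B$; (ii) eventual \emph{quasi}-linearity of $q(n)$; (iii) upgrading quasi-linearity to honest linearity.

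\textbf{Step (i): the upper bound.} Iterate the short exact sequence in the last variable,
\[
0 \longrightarrow \bigl(R_n/(I_n : x_n)\bigr)(-1) \xrightarrow{\ \cdot x_n\ } R_n/I_n \longrightarrow R_n/(I_n + (x_n)) \longrightarrow 0,
\]
which gives $q(n)\le \max\{\reg(R_n/(I_n:x_n))+1,\ \reg(R_n/(I_n+(x_n)))\}$. The chains $(I_n:x_n)_n$ and $(I_n+(x_n))_n$, after eliminating $x_n$, are controlled by finitely much data extracted from $\Icc$: contracting $(I_n+(x_n))$ to $R_{n-1}$ differs from $I_{n-1}$ by a chain generated in degrees $\le D_0$ on boundedly many orbits, while $(I_n:x_n)$ is, up to reindexing, again an $\Inc$-invariant chain (a ``shift'' of $\Icc$) and hence stabilizes with bounded generator degrees. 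An induction on $n$ that carries along the finitely many auxiliary chains so produced yields $q(n)\le An+B$ with $A,B$ depending only on $r$ and $D_0$. This is the regularity analogue of the mechanism behind the known linear bounds for $\pd$ and $\depth$ of invariant chains; the bookkeeping of auxiliary chains is the technical core of this step.

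\textbf{Steps (ii)--(iii): local cohomology up to symmetry, and the main obstacle.} Write $q(n)=\max_{i\ge 0}\bigl(a_i(R_n/I_n)+i\bigr)$, where $a_i(M)=\max\{t : H^i_{\mm_n}(M)_t\ne 0\}$ (and $-\infty$ if $H^i_{\mm_n}(M)=0$); only $0\le i\le \dim R_n/I_n$ contribute. The structural claim to establish is that, for each $i$, the graded modules $\bigl(H^i_{\mm_n}(R_n/I_n)\bigr)_{n\ge r}$ organize into a finitely generated object of a Noetherian-up-to-symmetry category (a graded $\Inc$-module that is ``Artinian up to symmetry'', via equivariant Matlis duality); this is delicate because $R_{n+1}/I_{n+1}$ is not a base change of $R_n/I_n$, so the transition maps require careful analysis of the colon and quotient chains from Step (i). Granting it, the Hilbert-series theory for finitely generated $\Inc$-modules, together with the degree bound $D_0$ and Step (i), forces each $a_i(R_n/I_n)$ to be eventually quasi-linear in $n$ with slopes bounded by $A$; a maximum of such functions is again eventually quasi-linear, proving (ii). Step (iii)---passing from quasi-linear to linear---is where I expect the real difficulty, and it is why the statement is still a conjecture in general: since the index set $\{0,\ldots,\dim R_n/I_n\}$ may expand with $n$, the maximum could a priori switch between distinct linear branches on distinct residue classes of $n$. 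Removing this possibility seems to require a single finitely generated \emph{bigraded} (in $n$ and $i$) structure governing all the local cohomology simultaneously, plus a specialization argument showing that the pairs (slope, intercept) realized for large $n$ form a fixed finite set; then $q(n)$ equals $(\max_i\alpha_i)\,n+\max\{\beta_i : \alpha_i=\max_j\alpha_j\}$ for $n\gg 0$, which is linear. Producing this uniform bigraded control is the crux on which a full proof hinges.

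\textbf{Remark on the edge-ideal case.} When each $I_n=I(G_n)$ is an edge ideal, $D_0=2$, and a direct combinatorial analysis of which graphs $G_n$ can occur as the $n$-th term of an $\Inc$-invariant chain shows that $\reg(R_n/I_n)\le 2$ for every $n$ (the relevant invariants, such as $\indmatch(G_n)$, staying bounded). Thus the maximum defining $q(n)$ involves only $i\le 2$ and the uniformity obstacle of Step (iii) disappears; the remaining task is the finer one of pinning down \emph{when} $q(n)$ equals $1$ versus $2$, i.e.\ $\reg I_n\in\{2,3\}$---which is exactly what the present paper carries out.
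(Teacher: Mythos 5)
Your proposal is not a proof, and indeed it cannot be compared with a proof in the paper, because the statement you are addressing is precisely Conjecture \ref{conj_regularity}, which remains open: the paper never proves it in this generality, it only establishes the special case of $\Inc$-invariant chains of edge ideals (where the limit is a constant, i.e.\ $C=0$ and $D\in\{2,3\}$), together with citing the previously known partial cases (artinian, saturated squarefree, $\Sym$-invariant monomial). Your text is a research program with acknowledged missing steps, and the missing steps are exactly the substance of the conjecture. Concretely: in Step (i), the assertion that $(I_n:x_n)_n$ is ``up to reindexing, again an $\Inc$-invariant chain'' and that the contraction of $(I_n+(x_n))$ is controlled by ``boundedly many orbits'' is not justified --- colon and sum chains of an $\Inc$-invariant chain do not obviously inherit invariance or uniform generation data, and the ``bookkeeping of auxiliary chains'' you defer is the entire difficulty; in Step (ii), the structural claim that the local cohomology modules $H^i_{\mm_n}(R_n/I_n)$ assemble into a finitely generated object of a noetherian-up-to-symmetry category is not a known theorem (precisely because $R_{n+1}/I_{n+1}$ is not a base change of $R_n/I_n$), and you grant it rather than prove it; and you state outright that Step (iii) is unresolved. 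An argument that explicitly leaves its crux open is a gap by definition.

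Your closing remark on the edge-ideal case also misrepresents what is true and what it costs. The bound $\reg(R_n/I_n)\le 2$, i.e.\ $\reg I_n\le 3$, does \emph{not} hold ``for every $n$'': the paper exhibits a chain with $\reg I_{10}=5$, and the inequality $\reg I_n\le 3$ is only proved for $n\ge 4r$ (Theorem \ref{thm_reg<=3}), via a nontrivial ``cochordal filtration'' argument (Lemma \ref{lem_cochordal_filtration}) resting on the key Lemmas \ref{lem_twopoints_corners} and \ref{lem_twopoints_induced2K2} about lattice points in triangles; it is not a direct consequence of bounded generating degree. Moreover, bounding $\indmatch(G_n)$ does not by itself bound the regularity: the paper emphasizes (Example \ref{ex_not_weaklychordal}) that $\reg I_n\ge 1+\indmatch(G_n)$ can be strict, with $\indmatch(G_n)=1$ but $\reg I_n=3$, because long induced anticycles occur. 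So even the part of your outline that you treat as easy is, in the paper, the main technical content, and the general conjecture remains untouched by your sketch.
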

We say that $\Icc=(I_n)_{n\ge 1}$ is a \emph{saturated} chain if $I_n=I_{n+1}\cap R_n$ for all $n\ge 1$. In \cite{LNNR2}, the conjecture was partially solved in some cases, notably when (the quotient ring defined by) $I_n$ is artinian for all $n$, or when $\Icc$ is a saturated chain, and $I_n$ is a squarefree monomial ideal for all $n$. The main result of \cite{LN22} implies that the limit $\lim\limits_{n\to \infty} (\reg I_n /n)$ always exists for an arbitrary invariant chain of monomial ideals, supporting the validity of \Cref{conj_regularity}. The conjecture was verified by Murai \cite{Mu20} and Raicu \cite{Ra21} (using different approaches) if $\Icc=(I_n)_{n\ge 1}$ is an $\Sym$-invariant chain of monomial ideals. On the other hand, \Cref{conj_regularity} remains open when the chain $\Icc$ is not saturated, even if each $I_n$ is a squarefree monomial ideal. 

As usual, an ideal $I_n\subseteq R_n$ is called an \emph{edge ideal} if it is generated by squarefree quadratic monomials. Related to the theory of invariant chains of edge ideals, algebraic and combinatorial aspects of FI-graphs were studied in \cite{Ram20, RW19}. The goal of this paper is to prove the following results.
\begin{thm}[Theorem \ref{thm_convergence}]
\label{thm_main}
Let $\Icc=(I_n)_{n\ge 1}$ be an $\Inc$-invariant \textup{(}not necessarily saturated\textup{)} chain of eventually nonzero edge ideals. Let $r=\ind(\Icc)\ge 1$. Then for all $n\ge N=N(r)$, there is an equality $\reg I_n = \reg I_{n+1}$, and moreover $\reg I_n \in \{2,3\}$.
\end{thm}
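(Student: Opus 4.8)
The plan is to reduce the statement about the whole chain to a finite-combinatorial analysis of the ideal $I_r$ at the stability index, and then to understand how the invariant saturation propagates. First I would recall the standard dictionary: each $I_n$ is the edge ideal of a graph $G_n$ on vertex set $\{1,\dots,n\}$, and since $\Icc$ is $\Inc$-invariant with $\ind(\Icc)=r$, the graphs $G_n$ for $n\ge r$ are obtained from $G_r$ by the combinatorial operation of ``shifting edges along increasing functions'': an edge $\{a,b\}\subseteq\{1,\dots,n\}$ lies in $G_n$ exactly when $\{\pi(a'),\pi(b')\}=\{a,b\}$ for some edge $\{a',b'\}$ of $G_r$ and some $\pi\in\Inc_{r,n}$. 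The key structural observation I would aim to prove is that for $n$ large, $G_n$ stabilizes into a highly regular shape — concretely, that the edge set of $G_n$ is governed by a finite list of ``orbit generators'' $\{a_i,b_i\}$ from $G_r$, and that an edge $\{a,b\}$ with $a<b$ lies in $G_n$ iff for some generator $\{a_i,b_i\}$ we have $a\ge a_i$ and $b-a\ge b_i-a_i$ (together with the boundary constraint coming from $\Inc_{r,n}$, i.e.\ $\pi(r)\le n$). This ``staircase'' description of $E(G_n)$ is what makes the regularity computable.

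Next, since $\reg$ of an edge ideal equals $1+\reg$ of the quotient, and $\reg R_n/I_n$ is bounded below by the induced matching number $\operatorname{indmatch}(G_n)$ and above by the cochordal cover number (or, more crudely, by the minimum number of edges in a cochordal cover, via the Morey–Villarreal / Woodroofe type bounds), I would establish two inequalities. For the lower bound $\reg I_n\ge 2$: this is automatic as soon as $I_n\ne 0$. For the lower bound $\reg I_n\ge 3$ in the relevant cases: exhibit an induced matching of size $2$ in $G_n$ for $n$ large whenever $G_r$ is ``not essentially a star'' — the point being that once there are two edges that are far apart in the staircase, shifting one of them out along $\Inc$ produces two vertex-disjoint edges with no connecting edge. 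For the upper bound $\reg I_n\le 3$: I would show $G_n$ admits a cochordal cover by at most two cochordal subgraphs for all large $n$; intuitively, the staircase splits into a ``bounded part'' near the small indices and a ``uniform part'', each of which is cochordal (or becomes cochordal after one shift). The stabilization $\reg I_n=\reg I_{n+1}$ then follows because both the induced matching number and the cochordal cover number become constant once the staircase shape is frozen, which happens at an explicit $N=N(r)$.

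The main obstacle, I expect, is the precise combinatorial control in two places. First, proving the staircase description of $E(G_n)$ rigorously: one must check that the set of pairs $(a, b-a)$ realized by edges of $G_n$ is genuinely ``upward closed'' modulo the $\Inc_{r,n}$ boundary effect, and quantify exactly when the boundary stops interfering — this is where the explicit $N(r)$ comes from, and getting the sharp value (not just existence) requires care. Second, the dichotomy $\{2,3\}$ forces one to show that $\reg I_n$ can \emph{never} reach $4$ or higher, i.e.\ that $G_n$ never contains an induced matching of size $3$ no matter how wild $G_r$ is; this is the genuinely surprising combinatorial fact, and I would prove it by contradiction: three pairwise-disjoint, pairwise-non-adjacent edges in $G_n$ would, via the shift structure, force three independent orbit generators in $G_r$ whose staircases are mutually ``non-interfering'', but then a suitable $\pi\in\Inc$ would produce an edge joining two of them in $G_n$ (since the staircases are upward-closed), contradicting induced-ness. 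Making this pigeonhole argument airtight — tracking which $\pi$ to apply and why the resulting edge is forced — is the crux, and it is also what pins down whether the limit is $2$ or $3$ in terms of explicit features of $I_r$.
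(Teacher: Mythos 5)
Your high-level plan is aligned with the paper's in two important places: the ``staircase'' description of $E(G_n)$ is in substance the paper's Lemma \ref{lem_integralpoints}, which encodes edges of $G_n$ as lattice points in a union of right isosceles triangles $\Delta((i_t,j_t),n-r)$, and your sketch of the no-$3K_2$ argument (shift one edge along $\Inc$ to force a connecting edge) is in the spirit of Lemmas \ref{lem_twopoints_corners}, \ref{lem_twopoints_induced2K2} and Theorem \ref{thm_no3K2}. But the route from there to the theorem has two genuine gaps.

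\emph{Lower bound.} You propose to show $\reg I_n\ge 3$ by exhibiting an induced $2K_2$ whenever $G_r$ is ``not essentially a star.'' This cannot work in general: the paper's Example \ref{ex_not_weaklychordal} (the chain with $I_4=(x_1x_3,x_2x_4)$) has $\indmatch(G_n)=1$ for all $n\ge 9$ and yet $\reg I_n=3$. The inequality $\reg I(G_n)\ge 1+\indmatch(G_n)$ is strict along this chain, so induced matchings alone do not detect when the eventual regularity is $3$. The paper's fix is to construct long induced \emph{anticycles} $C_m^c$ in $G_n$ (Section \ref{sect_longanticycles}: Proposition \ref{prop_reg3_specialcase1} via Construction \ref{constr_inducedanticycle}); this is the technically hardest part of the paper and is entirely absent from your plan.

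\emph{Upper bound.} Ruling out induced $3K_2$'s is not by itself sufficient for $\reg I_n\le 3$, since regularity is not bounded above by $1+\indmatch$. You suggest a cochordal cover by two cochordal subgraphs, which would suffice, but you give no reason such a cover exists, and the paper does not produce one. Instead it proves a ``cochordal filtration'' (Lemma \ref{lem_cochordal_filtration}): for a carefully chosen vertex $v_1$, the deletion $H\setminus N_H[v_1]$ is cochordal for every induced subgraph $H$ of $G_n$, and the recursion from Lemma \ref{lem_reg_edgeideal_deletion} then gives $\reg\le 3$ by induction on $|E(H)|$. Establishing the cochordality of $H\setminus N_H[v_1]$ in turn requires both the $2K_2$-control of Lemma \ref{lem_twopoints_induced2K2} and the short-anticycle control of Proposition \ref{prop_noCmc}; it is not a formal consequence of the no-$3K_2$ fact.

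Finally, the eventual constancy $\reg I_n=\reg I_{n+1}$ does not follow merely from $\indmatch(G_n)$ and a cochordal cover number becoming constant, since for a fixed value of those invariants the regularity could a priori still oscillate between $2$ and $3$. The paper settles this by an explicit case analysis on $E(G_r)$ (Theorem \ref{thm_convergence}): it characterizes exactly when the limit is $2$ (Propositions \ref{prop_reg2_sufficient_consecutivevertices} and \ref{prop_reg2_all-embracinginterval}) versus $3$ (Theorem \ref{thm_reg3}), and constancy is a corollary of that dichotomy rather than an input to it. You would need some version of that classification to close your argument.
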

It follows from \cite[Corollary 4.8]{LNNR2} that if $\Icc=(I_n)_{n\ge 1}$ is an $\Inc$-invariant chain of eventually nonzero squarefree monomial ideals, then the sequence $(\reg I_n)_{n\ge 1}$ is  bounded by a constant. On the other hand, there was no information about this constant. \Cref{thm_main} provides the surprising information that for $\Inc$-invariant chain of nonzero edge ideals, not only does the limit $\lim\limits_{n\to \infty} \reg I_n$ exist, but it can only be either 2 or 3. We are able to supply an explicit upper bound for the position when the sequence $(\reg I_n)_{n\ge 1}$ becomes constant, namely the number $N(r)$ in \Cref{thm_main} is at most $2(r^2+5r)$ (see \Cref{thm_convergence} and \Cref{rem_Nr}). In addition, we provide an effective characterization when $\reg I_n=2$ for all large enough $n$.

\begin{thm}[Theorem \ref{thm_convergence}]
\label{thm_main2}
Keep using notations of \Cref{thm_main}. Let $G_n$ be the graph on the vertex set $[n]=\{1,2,\ldots,n\}$ corresponding to $I_n$ for each $n$, and let $E(G_r)=\{\{i_1,j_1\},\ldots,\{i_s,j_s\}\}$ where we assume that $1\le i_t<j_t \le r,\ i_1\le i_2\le \cdots\le i_s$ and if $i_t=i_{t+1}$ then $j_t<j_{t+1}$. Denote $q=\max\{1\le t\le s: i_t=i_1\}$. Denote by $\indmatch(G_n)$ the induced matching number of the graph $G_n$. Then the following statements are equivalent:
\begin{enumerate}[\quad \rm (1)]
 \item $\reg I_n=2$ for all $n\gg 0$;
 \item  Either $j_q = \max\{j_1,\ldots,j_s\}$, or it holds that $\min \{j_t-i_t: t=1,\ldots,s\}=1$ and $\indmatch(G_{3r})=1$.
\end{enumerate}
\end{thm}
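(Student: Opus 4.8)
\emph{Step 1: reduction to chordality of the complement.} Since $I_n$ is, for $n\gg 0$, a nonzero ideal generated by squarefree quadrics, Fr\"oberg's theorem gives $\reg I_n=2$ if and only if the complement graph $G_n^c$ is chordal, equivalently if and only if $G_n$ has no induced subgraph isomorphic to $\overline{C_k}$ for any $k\ge 4$ (here $\overline{C_4}=2K_2$, $\overline{C_5}=C_5$, and so on). Because $(\reg I_n)_n$ is eventually constant with value in $\{2,3\}$ by \Cref{thm_main}, statement (1) is equivalent to ``$G_n^c$ is chordal for all $n\gg 0$'' and also to ``$G_n^c$ is chordal for infinitely many $n$''; so everything reduces to deciding, from $G_r$ alone, whether chordality of $G_n^c$ persists. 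The underlying combinatorics is the edge rule: for $n\ge r$ and $1\le a<b\le n$ one has $\{a,b\}\in E(G_n)$ exactly when some $\{i,j\}\in E(G_r)$ satisfies $i\le a$, $j-i\le b-a$ and $r-j\le n-b$; thus only the edges of $G_r$ minimal for the componentwise order on the triples $(i-1,\,j-i-1,\,r-j)$ matter. Two extremal features control the picture: among the edges with minimal first coordinate $i_1-1$, the one with smallest third coordinate is $\{i_1,j_q\}$, while the globally minimal third coordinate $r-M$ is realized by some edge $\{i',M\}$ with $M=\max_t j_t$. Condition (2)(a), i.e.\ $j_q=M$, says precisely that a single edge $\{i_1,M\}$ realizes both the minimal first coordinate and the minimal third coordinate.

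\emph{Step 2: the case $\min_t(j_t-i_t)\ge 2$.} Here no edge of $G_r$ joins consecutive integers, so every pair $\{a,a+1\}$ is a non-edge of $G_n$ and hence $G_n^c$ contains the Hamiltonian path $1-2-\cdots-n$. I claim $\reg I_n=2$ for $n\gg 0$ if and only if $j_q=M$. If $j_q=M$, then $\{i_1,M\}$ has simultaneously minimal left and right gap, which forces every non-edge of $G_n$ to be ``close to one end''; one then checks that $G_n^c$ is the Hamiltonian path together with two blobs of bounded size at its two ends and with no edge between those ends, hence is chordal (in fact an interval graph), so $\reg I_n=2$. If $j_q<M$, then for every $b$ with $n-r+j_q<b\le n$ the pair $\{i_1,b\}$ is not reachable from any edge of $G_r$, i.e.\ it is an edge of $G_n^c$; splicing such a long ``shortcut'' into the Hamiltonian path yields an induced cycle of length $\Theta(n)$ in $G_n^c$, so $G_n^c$ is not chordal and, by \Cref{thm_main}, $\reg I_n=3$ for $n\gg 0$.

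\emph{Step 3: the case $\min_t(j_t-i_t)=1$.} Fix an edge $\{i_0,i_0+1\}\in E(G_r)$ with left and right gaps $\ell,\rho$, so $\ell+\rho=r-2$. Every pair $\{a,b\}$ with $\ell+1\le a<b\le n-\rho$ is reachable from this edge, i.e.\ $G_n$ restricted to the middle interval $\{\ell+1,\dots,n-\rho\}$ is a complete graph, and consequently every non-edge of $G_n$ is incident to one of the two end blocks $L=\{1,\dots,\ell\}$, $R=\{n-\rho+1,\dots,n\}$, whose total size is $r-2$. Hence in any induced $\overline{C_k}\subseteq G_n$ the set $L\cup R$ contains a vertex cover of the corresponding $C_k$, giving $k\le 2(r-2)$; thus obstructions have bounded size, and contracting the gaps inside the middle interval reduces the existence of any such obstruction for large $n$ to its existence in $G_{3r}$. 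The technical heart of this case is the combinatorial lemma that, once a consecutive edge is present, every complement-chordality obstruction $\overline{C_k}$ with $k\ge 5$ in $G_n$ already forces an induced $2K_2$ in $G_n$; combined with the previous reductions this yields that $G_n^c$ is chordal for $n\gg 0$ if and only if $G_n$ has no induced $2K_2$ for $n\gg 0$, i.e.\ if and only if $\indmatch(G_n)=1$ for $n\gg 0$. A stabilization statement for the induced matching number, of the form $\indmatch(G_n)=\indmatch(G_{3r})$ for $n\ge 3r$ (again by gap contraction), rewrites this as $\indmatch(G_{3r})=1$.

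Assembling Steps 2 and 3 identifies (1) with the disjunction in (2). The routine ingredients are the edge rule, Fr\"oberg's theorem, and the explicit long-cycle construction of Step 2; the genuinely delicate points, which I expect to be the main obstacle, are the combinatorial lemma of Step 3 (that a consecutive edge collapses all complement-chordality obstructions down to induced $2K_2$'s) together with the gap-contraction arguments pinning the stabilization threshold to $3r$. This is also the structural reason for the asymmetry between the two alternatives in (2): a consecutive edge ``fills in the middle'' so thoroughly that only the induced-matching obstruction can survive, whereas in its absence the middle stays sparse and the sole obstruction is a long induced cycle in the complement, whose presence is decided entirely by whether the minimal-index vertex $i_1$ is joined to the maximal vertex $M$.
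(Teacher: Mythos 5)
Your overall skeleton (Fr\"oberg reduction, case split on $\min_t(j_t-i_t)$, the clique/vertex-cover bound when a consecutive edge exists, and the role of the condition $j_q=\max_t j_t$) matches the paper, but the decisive step is missing. The hard direction is: if $j_q<\max_t j_t$ and $\min_t(j_t-i_t)\ge 2$, then $G_n^c$ is not chordal for $n\gg 0$. You dispose of it by ``splicing the shortcut $\{i_1,b\}$ into the Hamiltonian path $1\text{--}2\text{--}\cdots\text{--}n$,'' but this does not produce an induced cycle: every pair $\{x,x+e\}$ with $2\le e<\min_t(j_t-i_t)$ is a non-edge of $G_n$, hence a chord of your proposed cycle whenever the minimal gap is at least $3$, and even when it equals $2$ the pairs near the right end fail to be edges of $G_n$ because which generators of $G_r$ are applicable depends on position (the constraint $b\le j_t+n-r$). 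This is exactly why the paper's \Cref{constr_inducedanticycle} uses variable step sizes obtained by sorting the generators by their gaps (Algorithms 1--4) and verifies inducedness in Lemmas \ref{lem_consecutivevertices} and \ref{lem_non_consecutivevert}; moreover that construction only works when $\max_t j_t=j_q+1$, and the general case $\max_t j_t>j_q+1$ requires the further induction on the $q$-invariant via the auxiliary chain of \Cref{prop_intersection_chain} (\Cref{thm_reg3}). None of this machinery, or a substitute for it, appears in your proposal.

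Two further points are asserted rather than proved. In Step 2 your structural description of $G_n^c$ when $j_q=\max_t j_t$ (``Hamiltonian path plus two bounded end blobs with no edge between the ends'') is not accurate: all pairs at distance less than $\min_t(j_t-i_t)$ are edges of $G_n^c$ throughout, and when $\max_t j_t<r$ the pair $\{1,n\}$ is an edge of $G_n^c$ joining the two ends; the conclusion is true but needs an argument such as the paper's \Cref{prop_reg2_all-embracinginterval}, which rests on \Cref{lem_twopoints_corners}. In Step 3 you leave unproved both the lemma that a consecutive edge reduces every anticycle obstruction with $k\ge 5$ to an induced $2K_2$ and the stabilization $\indmatch(G_n)=\indmatch(G_{3r})$; the paper instead rules out anticycles $C_m^c$ with $5\le m\le n/r$ for arbitrary chains (\Cref{prop_noCmc}), combines this with the clique bound $m\le 2(r-2)$, and proves the induced-matching stabilization separately (\Cref{thm_no3K2}). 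Finally, when assembling the cases you also need that $j_q=\max_t j_t$ together with $\min_t(j_t-i_t)=1$ forces $\indmatch(G_{3r})=1$, which again requires the quantitative form of the cochordality statement at $n=3r$. As it stands, the proposal records the correct statement and some correct reductions, but the core constructions carrying the proof are absent.
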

The proofs of Theorems \ref{thm_main} and \ref{thm_main2} are combinatorial, and reveal unexpected properties of the graphs $G_n$. A fairly simple but crucial observation is \Cref{lem_integralpoints}, which translates questions about the chain $(I_n)_{n\ge 1}$ and the graphs $G_n$ to questions about integral points inside certain right isosceles triangles on the coordinate plane. Most of the results in this paper were found thanks to the intuitive approach provided by this observation. Our main tools in proving Theorems \ref{thm_main} and \ref{thm_main2}  are two key Lemmas \ref{lem_twopoints_corners} and \ref{lem_twopoints_induced2K2}, which thanks to \Cref{lem_integralpoints}, are statements about induced $2K_2$ (i.e. two disjoint edges) subgraphs of $G_n$ when $n$ is large. In particular, we show in \Cref{thm_no3K2} that for all $n\ge 3r$, $G_n$ has induced matching number  $\indmatch(G_n) \in \{1,2\}$ and the equality $\indmatch(G_n)=\indmatch(G_{n+1})$ holds. 

There was a lot of work on the algebraic and combinatorial properties of edge ideals; see, for example, \cite{MV12} and the references therein. The proofs of our main results also use some known results on this topic, e.g. the celebrated Fr\"oberg theorem characterizing edge ideals with a 2-linear resolution \cite{Fr90}. Nevertheless, our work is more than just routine applications of the existing literature. For example, besides Lemmas \ref{lem_twopoints_corners} and \ref{lem_twopoints_induced2K2} on induced $2K_2$ subgraphs of $G_n$, a novelty of our approach is given by \Cref{constr_inducedanticycle}, where we construct long induced \emph{anticycles} (i.e. complementary graphs of cycles) of $G_n$ for a large family of chains. It is also worth mentioning that to prove the inequality $\reg I_n\le 3$ for $n\gg 0$ in \Cref{thm_main}, we show that $G_n$ possesses certain ``cochordal filtration'', see the proof of \Cref{thm_reg<=3}. Related to this last point, we note that the well-known inequality $\reg I_n \ge 1+\indmatch(G_n)$ is strict in general for an $\Inc$-invariant chain of edge ideals $(I_n)_{n\ge 1}$.  To summarize, our results suggest that there is an interesting interplay between the algebraic properties of $I_n$ and the combinatorics of $G_n$. We hope that our main results and techniques may shed further light on \Cref{conj_regularity} and on the combinatorics of $\Inc$-invariant chains of monomial ideals in general. Based on experiments with Macaulay2 \cite{GS96}, we are tempted to propose the following 
\begin{conj}
Let $\Icc=(I_n)_{n\ge 1}$ be an $\Inc$-invariant chain of eventually nonzero squarefree monomial ideals with $r=\ind(\Icc)$, and let $d=d(I_r)$ be the maximal generating degree of $I_r$. Then $\reg I_n \le 2d-1$ for all $n\gg 0.$
\end{conj}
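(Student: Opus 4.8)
The plan is to carry out the strategy behind \Cref{thm_main} one homological degree at a time. By the stabilization property we have $I_n=\Inc_{r,n}(I_r)R_n$ for all $n\ge r$, so the chain is determined by the finitely many squarefree generators $x_{a_1}\cdots x_{a_k}$ of $I_r$ (with $k\le d$ and $1\le a_1<\cdots<a_k\le r$) together with the shift action of $\Inc$. The degree-$d$ analogue of \Cref{lem_integralpoints} is to record each such generator by the pair consisting of its starting index $a_1$ and its gap vector $(a_2-a_1,\dots,a_k-a_{k-1})$; this turns statements about $I_n$ and its Stanley--Reisner complex $\Delta_n$ into statements about lattice points in a $(d-1)$-dimensional analogue of the right isosceles triangles used in this paper, on which $\Inc$ acts by translation in the starting-index coordinate. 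By Hochster's formula it then suffices to prove that $\widetilde H_{j}(\Delta_n|_W;\kk)=0$ for every $W\subseteq[n]$ and every $j\ge 2d-2$ once $n\gg 0$; by Terai's duality this is equivalent to $\pd\bigl(R_n/I_{\Delta_n^\vee}\bigr)\le 2d-1$, which may be the more tractable reformulation.

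I would attack the homological vanishing by generalizing the cochordal filtration from the proof of \Cref{thm_reg<=3}: exhibit $\Delta_n$, for all large $n$, as a union $\Delta_n=\Delta_n^{(1)}\cup\cdots\cup\Delta_n^{(c)}$ of subcomplexes on $[n]$ whose Stanley--Reisner ideals are generated in degree at most $d$ and have a linear resolution (equivalently, by the Eagon--Reiner criterion, whose Alexander duals are Cohen--Macaulay), with $c$ bounded independently of $n$. An appropriate higher-degree version of Woodroofe's bound relating regularity to cochordal covers should then control $\reg I_n$ in terms of $d$ and $c$, and for $c=2$ one expects it to yield exactly the bound $2d-1$, consistent with the $d=2$ case treated here. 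Thus the combinatorial heart of the matter is to show that the gap vectors of the generators of $I_r$ organize $\Delta_n$ into at most two ``$d$-linear pieces'' for all $n\gg 0$, the natural extension of the fact, implicit in our results, that the cochordal cover number of $G_n$ is eventually at most $2$. Concretely, the two-edge analyses of Lemmas \ref{lem_twopoints_corners} and \ref{lem_twopoints_induced2K2} and the anticycle construction of \Cref{constr_inducedanticycle} would be replaced by their $d$-tuple counterparts describing the induced sub-configurations of $\Delta_n$ that obstruct $d$-linearity, and \Cref{thm_no3K2} by a bound of the form ``$\indmatch$-type invariant of $G_n$ at most $d-1$ for $n\gg 0$''.

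The main obstacle is precisely this higher-dimensional combinatorics. In the edge-ideal case the obstructions to $\reg\le 3$ are induced $2K_2$'s and the relevant picture, lattice points inside right isosceles triangles in the plane, is two-dimensional and completely explicit; for $d\ge 3$ one must instead analyze induced subcomplexes of the $(d-1)$-dimensional complexes $\Delta_n|_W$, rule out nonvanishing $\widetilde H_{2d-2}$ uniformly in $n$, and control configurations whose ``spread'' (diameter relative to the size of the translation step) could a priori grow with $n$, showing instead that any such configuration already appears inside $\Delta_{N(r,d)}$ for an explicit $N(r,d)$. A reasonable staged approach is to first settle the conjecture for saturated chains, where the squarefree-monomial case of \cite{LNNR2} can be combined with the filtration argument, and for $\Sym$-invariant chains, where the results of Murai \cite{Mu20} and Raicu \cite{Ra21} apply; and then, as an independent intermediate step, to prove the qualitative statement that the eventual value of $\reg I_n$ depends only on $d$ and not on $r$, before attempting to sharpen the constant to $2d-1$.
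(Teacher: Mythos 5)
The statement you are trying to prove is not a theorem of the paper but the conjecture posed at the end of the introduction, motivated by Macaulay2 experiments; the paper offers no proof of it (the results it actually proves, \Cref{thm_main} and \Cref{thm_main2}, cover only the quadratic case $d=2$). So there is no argument of the authors to compare against, and what you have written is, as you acknowledge, a plan rather than a proof. Within that plan, let me flag one inaccuracy and one genuine gap.

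The inaccuracy: you write that the fact ``the cochordal cover number of $G_n$ is eventually at most $2$'' is implicit in the paper's results, and you model your higher-$d$ strategy on a two-piece $d$-linear cover of $\Delta_n$. This is not how the paper proceeds. The proof of \Cref{thm_reg<=3} is a recursive vertex-deletion argument: via \Cref{lem_reg_edgeideal_deletion} one has $\reg H \le \max\{\reg(H\setminus N_H[v_1])+1,\reg(H\setminus v_1)\}$, and \Cref{lem_cochordal_filtration} supplies the cochordality of $H\setminus N_H[v_1]$ at each step. The recursion can have depth of order $n$, and no two-element cochordal cover is extracted from it; Woodroofe's inequality $\reg I(G)\le\operatorname{cochord}(G)+1$ runs only one way, so $\reg I_n\le 3$ does not yield $\operatorname{cochord}(G_n)\le 2$. (Incidentally, the regularity bound you would need from a two-term cover by ideals with $d$-linear resolution, namely $\reg(I_1+I_2)\le 2d-1$, is the Kalai--Meshulam theorem rather than ``a higher-degree version of Woodroofe's bound''; that piece at least is known.) The genuine gap is precisely the one you name: producing the two-piece $d$-linear decomposition of $\Delta_n$ for $n\gg 0$, or any workable substitute for it, is the entire content of the conjecture, and nothing in the sketch engages it concretely — the Hochster/Terai reformulation and the proposed lattice-point encoding by starting index and gap vector are reasonable framings but do not make progress. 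The staged suggestions at the end (saturated chains, $\Sym$-invariant chains, first proving the eventual regularity depends only on $d$ and not on $r$) are sensible entry points, but none is carried out.
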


\textbf{Organization}. We discuss some basic results and constructions in \Cref{sect_prelim}. Within this part, \Cref{subsect_saturated} illustrates our main results in the special cases of saturated chains. Sections \ref{sect_inducedmatchings}, \ref{sect_inducedanticyc}, and \ref{sect_longanticycles} form the technical core of this work.  In \Cref{sect_inducedmatchings}, we prove that for large $n$, the graph $G_n$ corresponding to the ideal $I_n$ in any $\Inc$-invariant chain of edge ideals has no induced $3K_2$ subgraph, and its induced matching number is a constant (either $1$ or $2$). In \Cref{sect_inducedanticyc}, we show that while the complementary graph $G_n^c$ may contain induced cycles (\Cref{ex_not_weaklychordal}), the length of such cycles has to be large (\Cref{prop_noCmc}). In \Cref{sect_longanticycles}, we construct long induced anticycles of the graph $G_n$ under some extra assumptions, see \Cref{prop_reg3_specialcase1} and particularly \Cref{constr_inducedanticycle}. This section is technically the most challenging part of the manuscript. On the other hand, in essence \Cref{sect_longanticycles} does not depend on the remaining parts,  and \Cref{prop_reg3_specialcase1} is the only result in this section that matters later on. For a quick first reading, the reader may simply take \Cref{prop_reg3_specialcase1} for granted and proceed directly to \Cref{sect_sharpbound}.  After the preparations in Sections \ref{sect_inducedmatchings}--\ref{sect_longanticycles}, we derive the more transparent and interesting results on $\Inc$-invariant chains of edge ideals in the rest of this paper.  In  \Cref{sect_sharpbound}, we prove that under the hypotheses of \Cref{thm_main}, $\reg I_n\le 3$ for all large enough $n$. We then deduce our main results, Theorems \ref{thm_main} and \ref{thm_main2}, in \Cref{sect_converge}. 

\section{Preliminaries}
\label{sect_prelim}

\subsection{Equivariant chains of ideals}
Let $R_n=\kk[x_1,\ldots,x_n]$ be a standard graded noetherian polynomial ring where $n\ge 1$, and let $R_0=\kk$. Denote by $G(J)$ the set of minimal monomial generators of a monomial ideal $J$ of $R_n$.
\begin{defn}[Maximal index in the support]
For a monomial $m\in R_n$, denote by $\Msupp(m)$ the maximal index $1\le i\le n$ such that $x_i$ divides $m$. If $m=1$, we set $\Msupp(1)=0$. For a monomial ideal $J\subseteq R_n$, define $\Msupp(J)=\max \{\Msupp(m): m\in G(J)\}$.
\end{defn}
The following slight generalization of saturated chains was introduced in \cite{LN22}.
\begin{defn}[Quasi-saturated chains]
 Let $\Icc=(I_n)_{n\ge 1}$ be an $\Inc$-invariant chain of ideals with $r=\ind(\Icc)$. Set $p_n=\Msupp(I_n)$. We say that $\Icc$ is \emph{quasi-saturated} if
 $I_{n+1}\cap R_{p_n}=I_n\cap R_{p_n}$  for all $n\ge r$.
\end{defn}

Any saturated chain is also a quasi-saturated chain. We will invoke the following simple lemma several times. Below, for each $i\ge 0$, let $\sigma_i: \N \to \N$ be the function given by 
\[
\sigma_i(j) = \begin{cases}
               j, &\text{if $1\le j\le i$},\\
               j+1, &\text{if $j\ge i+1$}.
              \end{cases}
\]
Both assertions of the next lemma are immediate from the fact that for $n\ge r=\ind(\Icc)$, it holds that $\langle \Inc_{n,n+1}(I_n) \rangle _{R_{n+1}}=I_{n+1}$.
\begin{lem}
\label{lem_maxsupp}
Let $\Icc=(I_n)_{n\ge 1}$ be an $\Inc$-invariant chain of nonzero proper monomial ideals with $r=\ind(\Icc)$. Denote $p=\Msupp I_r$, so $1\le p\le r$. Then we have $\Msupp(I_{n+1})\le\Msupp(I_n)+1$, and in particular $\Msupp I_n \le n-r+p$ for all $n\ge r$. 
\end{lem}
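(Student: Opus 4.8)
The plan is to extract everything from the displayed identity $I_{n+1}=\langle \Inc_{n,n+1}(I_n)\rangle_{R_{n+1}}$, which holds for all $n\ge r$. Since a monomial ideal is minimally generated by the divisibility‑minimal elements of any monomial generating set, and since $\pi(I_n)\subseteq\langle\pi(m):m\in G(I_n)\rangle_{R_{n+1}}$ for every $\pi$, each $u\in G(I_{n+1})$ has the form $u=\pi(m)$ for some $m\in G(I_n)$ and some $\pi\in\Inc_{n,n+1}$. Because $I_n$ is nonzero and proper, $m\ne 1$, so writing $m=x_{a_1}^{e_1}\cdots x_{a_k}^{e_k}$ with $a_1<\cdots<a_k=\Msupp(m)$ and using that $\pi$ is strictly increasing gives $\Msupp(u)=\pi(a_k)=\pi(\Msupp(m))$.

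The key elementary observation I would isolate is that any $\pi\in\Inc_{n,n+1}$ satisfies $\pi(j)\le j+1$ for every $1\le j\le n$: indeed, $\pi$ being strictly increasing forces $\pi(n)\ge\pi(j)+(n-j)$, and combined with $\pi(n)\le n+1$ this yields $\pi(j)\le (n+1)-(n-j)=j+1$. Applying this with $j=\Msupp(m)\le n$ gives $\Msupp(u)=\pi(\Msupp(m))\le\Msupp(m)+1\le\Msupp(I_n)+1$. Taking the maximum over all $u\in G(I_{n+1})$ proves the first inequality $\Msupp(I_{n+1})\le\Msupp(I_n)+1$.

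The bound $\Msupp(I_n)\le n-r+p$ for $n\ge r$ then follows by a one‑line induction on $n$: the base case $n=r$ is the equality $\Msupp(I_r)=p$, and the inductive step is $\Msupp(I_{n+1})\le\Msupp(I_n)+1\le (n-r+p)+1=(n+1)-r+p$.

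There is no serious obstacle here; the only points requiring a moment of care are the passage from an arbitrary monomial generating set of $I_{n+1}$ to its minimal generators (so that one may take $m$ itself to be a \emph{minimal} generator of $I_n$) and the identity $\langle\Inc_{n,n+1}(I_n)\rangle_{R_{n+1}}=I_{n+1}$ for $n\ge r$, which is built into the definition of the stability index, following from $I_n=\Inc_{r,n}(I_r)R_n$ together with the factorization $\Inc_{r,n+1}=\Inc_{n,n+1}\circ\Inc_{r,n}$. Everything else is the two‑line computation above.
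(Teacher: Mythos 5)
Your proof is correct and follows exactly the route the paper intends: the paper offers no written proof beyond noting that both assertions are immediate from the identity $I_{n+1}=\langle \Inc_{n,n+1}(I_n)\rangle_{R_{n+1}}$ for $n\ge r$, which is precisely the identity you build on. Your observation that every $\pi\in\Inc_{n,n+1}$ satisfies $\pi(j)\le j+1$ for $j\le n$, applied to minimal generators and followed by the one-line induction, is just the spelled-out version of that ``immediate'' argument.
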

The following is a consequence of \cite[Lemma 4.7 and its proof]{LN22}, by setting $e=0$.
\begin{prop}
\label{prop_intersection_chain}
Let $\Icc=(I_n)_{n\ge 1}$ be an $\Inc$-invariant chain of eventually nonzero proper monomial ideals with $r=\ind(\Icc)$. Let $p=\Msupp(I_r)$. Consider collection $\Jcc=(J_n)_{n\ge 1}$ of ideals given by
$$
J_n= \begin{cases}
 \langle I_n \cap R_{n-r+p-1} \rangle_{R_n}, &\text{if $n\ge r+1$},\\
0, &\text{if $n \le r$}.
\end{cases}
$$ 
The following statements hold.
\begin{enumerate}[\quad \rm(i)]
\item $\Jcc$ is an $\Inc$-invariant chain of eventually nonzero monomial ideals and $I_n\subseteq J_{n+1}$ for all $n\ge r$.
\item There is an equality $\ind \Jcc = r+1$. 
\item The chain $\Icc$ is quasi-saturated if and only if $J_{r+1}=\langle I_r \rangle _{R_{r+1}}$.
\item There is an equality $\reg J_n= \reg (I_n,x_{n-r+p})$ for all $n\ge r+1$.
\end{enumerate} 
\end{prop}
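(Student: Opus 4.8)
The plan is to obtain all four assertions from \cite[Lemma 4.7 and its proof]{LN22}, which establishes the analogous statement for a one-parameter family of chains whose parameter $e\ge 0$ specializes to the present $\Jcc$ at $e=0$ (the effect of $e$ being to lower the truncation index from $n-r+p-1$ to $n-r+p-1-e$). Before invoking it I would record that parts (i) and (iv), together with the easy halves of (ii) and (iii), have short self-contained proofs, so that what actually needs to be imported from \cite{LN22} is isolated.

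For (i): if $m\le r$ then $J_m=0$; if $m\ge r+1$ and $\pi\in\Inc$ satisfies $\pi(m)\le n$, pick a monomial $u\in I_m$ with $\Msupp(u)\le m-r+p-1$. Then $\pi(u)\in\pi(I_m)\subseteq I_n$, and since a strictly increasing $\pi\colon\N\to\N$ satisfies $\pi(a)-\pi(b)\ge a-b$ for $a\ge b$, we get $\Msupp(\pi(u))=\pi(\Msupp(u))\le\pi(m-r+p-1)\le\pi(m)-(r-p+1)\le n-r+p-1$, hence $\pi(u)\in I_n\cap R_{n-r+p-1}$ and $\pi(J_m)\subseteq J_n$. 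The inclusion $I_n\subseteq J_{n+1}$ for $n\ge r$ is immediate from \Cref{lem_maxsupp}: every $g\in G(I_n)$ has $\Msupp(g)\le n-r+p=(n+1)-r+p-1$ and lies in $I_{n+1}$. And $\Jcc$ is eventually nonzero because $\ind(\Icc)=r$ forces $I_r\ne 0$ (otherwise $I_n=\langle\Inc_{r,n}(I_r)\rangle_{R_n}=0$ for all $n\ge r$), so for $n\ge r+1$ any $g\in G(I_r)$ lies in $I_n\cap R_{n-r+p-1}$ and $J_n\ne 0$. For (iv), put $t=n-r+p$; by \Cref{lem_maxsupp} every $g\in G(I_n)$ has $\Msupp(g)\le t$, so ``$x_t\nmid g$'' is the same as ``$\Msupp(g)\le t-1$'', i.e.\ as ``$g\in G(J_n)$''. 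Hence $(I_n,x_t)=J_n+(x_t)$, and as $x_t$ divides no generator of $J_n$ it is a nonzerodivisor on $R_n/J_n$; quotienting by a linear nonzerodivisor preserves regularity, so $\reg(I_n,x_t)=\reg J_n$.

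The substance is in (ii) and (iii). Here $\ind\Jcc$ is well-defined by $\Inc$-noetherianity and part (i); $\ind\Jcc\ge r+1$ is clear since $J_r=0\ne J_{r+1}$; and both ``$\supseteq$'' in $J_n=\langle\Inc_{r+1,n}(J_{r+1})\rangle_{R_n}$ (equivalent to $\ind\Jcc\le r+1$) and the implication ``$\Icc$ quasi-saturated $\Rightarrow J_{r+1}=\langle I_r\rangle_{R_{r+1}}$'' follow formally from (i) and the definitions. What genuinely requires \cite{LN22} is the reverse inclusion: each $g\in G(J_n)$, i.e.\ each minimal generator of $I_n$ with $\Msupp(g)\le n-r+p-1$, must be exhibited as lying in $\langle\Inc_{r+1,n}(J_{r+1})\rangle_{R_n}$. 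One cannot merely choose $\pi\in\Inc_{r,n}$ and $v\in G(I_r)$ with $\pi(v)=g$ and truncate $\pi$ into $\Inc_{r+1,n}$: when $\Msupp(v)<p$ the bound $\Msupp(g)\le n-r+p-1$ is too weak to force $\pi(r+1)\le n$ (already for $r=2$, $I_2=(x_1^3,x_1x_2)$, the generator $g=x_{n-1}^3$ of $J_n$ factors through $J_3$ only via $x_2^3\in G(J_3)$, not via $x_1^3$), so one must re-express $g$ through a generator of $J_{r+1}$ of maximal $\Msupp$. The converse in (iii) --- that the single equality $J_{r+1}=\langle I_r\rangle_{R_{r+1}}$ propagates to $I_{n+1}\cap R_{p_n}=I_n\cap R_{p_n}$ for all $n\ge r$ --- needs the same orbit bookkeeping. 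I would import both from \cite[Lemma 4.7]{LN22} (at $e=0$) rather than reprove them, and I expect this re-expression/propagation step to be the only real obstacle.
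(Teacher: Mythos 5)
Your proposal is correct and takes the same route as the paper, which simply cites \cite[Lemma~4.7 and its proof]{LN22} at $e=0$ for all four parts. You go slightly further by supplying short self-contained verifications of (i) and (iv) and the easy directions of (ii) and (iii), and you correctly isolate the two steps that genuinely need the imported lemma (the containment $J_n\subseteq\langle\Inc_{r+1,n}(J_{r+1})\rangle_{R_n}$ and the converse in (iii)), with an apt example showing why the naive ``truncate $\pi$'' argument fails when a minimal generator factors through an edge of $G_r$ of small $\Msupp$.
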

The following invariant is useful for inductive arguments.
\begin{defn}
\label{defn_q-invariant}
Let $J\subseteq R_n$ be a nonzero proper monomial ideal such that 
$\Msupp(J)=p\ge 1$. Let $\delta(J)$ denote the maximal degree of an element of $G(J)$. Then the \emph{$q$-invariant} of $J$ is defined to be the number
\[
{q}(J) 
= 
\sum_{j=0}^{\delta(J)}\dim_\kk \Big(\frac{R_p}{J\cap R_p}\Big)_j.
\]

Let $\Icc=(I_n)_{n\ge 1}$ be an $\Inc$-invariant chain with $r=\ind(\Icc)$. We call $q(\Icc):=q(I_r)$ the \emph{$q$-invariant} of the chain $\Icc$.
\end{defn}

If $J=R_n$, we use the convention that $q(J)=0$. Hence $q(J)=0$ if and only if $J=R_n$.
\begin{ex}
Let $J=(x_1x_3,x_2x_4)\subseteq R_5=\kk[x_1,\ldots,x_5]$. Then $\delta(J)=2$, $\Msupp(J)=4$, so
\[
q(J)= \sum_{j=0}^{2}\dim_\kk \Big(\frac{R_4}{J\cap R_4}\Big)_j=1+4+8=13.
\]
\end{ex}
The next lemma is immediate.
\begin{lem}
\label{lem_q-invariant}
The following statements hold.
\begin{enumerate}[\quad \rm (1)]
\item For any monomial ideal $I\subseteq R_n$, there is an equality $q(I)=q(\langle I \rangle_{R_{n+1}})$.

\item Let $I\subseteq J$ be nonzero monomial ideals of $R_n$ such that $\delta(I)\ge \delta(J)$ and $\Msupp(I)\ge \Msupp(J)$. Then there is an inequality
\[
q(I)\ge q(J).
\]
The equality holds if and only if $I=J$. 
\end{enumerate}
\end{lem}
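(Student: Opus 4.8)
The plan is to reduce both assertions to elementary monotonicity properties of the graded Hilbert functions of the quotients $R_p/(K\cap R_p)$, using repeatedly that for a monomial ideal $K$ the number $\dim_\kk(R_p/(K\cap R_p))_j$ records exactly which degree-$j$ monomials of $R_p$ fail to lie in $K$.

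For part (1) I would first record that extending a monomial ideal by one further variable leaves its minimal monomial generators unchanged: writing $I'=\langle I\rangle_{R_{n+1}}$, any monomial of $I'$ divisible by $x_{n+1}$ remains in $I'$ after dividing out $x_{n+1}$ (since $G(I)\subseteq R_n$), so no minimal generator of $I'$ involves $x_{n+1}$, and hence $G(I')=G(I)$. From this $\delta(I')=\delta(I)$ and $\Msupp(I')=\Msupp(I)=:p$ follow at once, and a degree-$j$ monomial of $R_p$ lies in $I'$ iff it is divisible by some element of $G(I)$ iff it lies in $I$, giving $I'\cap R_p=I\cap R_p$; the two $q$-invariants then agree term by term. (The cases $I=0$ or $I=R_n$ are vacuous or trivial.)

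For part (2), set $p=\Msupp(I)\ge p'=\Msupp(J)\ge 1$ and $d=\delta(I)\ge d'=\delta(J)\ge 1$, the case $J=R_n$ being immediate since then $q(J)=0$. I would then establish two monotonicity facts. First, the inclusion $R_{p'}\hookrightarrow R_p$ induces a degree-preserving injection $R_{p'}/(I\cap R_{p'})\hookrightarrow R_p/(I\cap R_p)$, because $R_{p'}\cap(I\cap R_p)=I\cap R_{p'}$; hence $\dim_\kk(R_{p'}/(I\cap R_{p'}))_j\le\dim_\kk(R_p/(I\cap R_p))_j$ for all $j$. Second, $I\subseteq J$ forces $I\cap R_{p'}\subseteq J\cap R_{p'}$, hence $\dim_\kk(R_{p'}/(J\cap R_{p'}))_j\le\dim_\kk(R_{p'}/(I\cap R_{p'}))_j$ for all $j$. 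Chaining these and then dropping the extra nonnegative terms permitted by $d\ge d'$ yields
\[
q(I)=\sum_{j=0}^{d}\dim_\kk(R_p/(I\cap R_p))_j\ \ge\ \sum_{j=0}^{d}\dim_\kk(R_{p'}/(J\cap R_{p'}))_j\ \ge\ \sum_{j=0}^{d'}\dim_\kk(R_{p'}/(J\cap R_{p'}))_j=q(J).
\]

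Finally, for the equality clause: one direction is trivial, and for the other I would argue that $q(I)=q(J)$ forces every inequality above to hold term by term, so in particular $(I\cap R_{p'})_j=(J\cap R_{p'})_j$ for all $j\le d$. Since $\Msupp(J)=p'$, every minimal generator of $J$ lies in $R_{p'}$ and has degree at most $d'\le d$, hence lies in $I$; thus $J\subseteq I$ and $I=J$. I do not anticipate a genuine obstacle — the paper itself flags this lemma as immediate — and the only step needing slight care is the equality analysis, namely passing from the chain of inequalities to term-by-term equality of Hilbert functions and then using $\Msupp(J)=p'$ to locate all generators of $J$ inside $R_{p'}$ in degrees $\le d$.
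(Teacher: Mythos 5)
Your proof is correct, and since the paper only asserts this lemma is immediate without giving an argument, your elementary verification via the graded Hilbert functions of $R_p/(K\cap R_p)$ is exactly the intended one. In particular, your handling of the equality case — collapsing the chain of inequalities term by term and then using $\Msupp(J)=p'$ and $\delta(J)\le \delta(I)$ to place all minimal generators of $J$ inside $(I\cap R_{p'})_{\le \delta(I)}$ — is sound and complete.
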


\subsection{Casteluovo--Mumford regularity}
Let $S=\kk[x_1,\ldots,x_n]$ be a standard graded polynomial ring where $n\ge 1$. Let $M$ be a finitely generated graded $S$-module. Then (\emph{Castelnuovo--Mumford}) \emph{regularity} of $M$ is
\[
\reg M=\sup\{j-i: \Tor^S_i(\kk,M)_j\neq 0\}.
\]
The following useful lemma on regularity of monomial ideals is from \cite[Lemma 2.10]{DHS13} and \cite[Corollary 3.3 and Theorem 4.7]{CHH+19}.
\begin{lem}
\label{lem_reg_mon}
Let $J$ be a monomial ideal of $S$ and $x$ any variable of $S$. Then there is a chain
\[
\max\{\reg(J:x), \reg(J,x)\} \le \reg J \in \{\reg(J:x)+1, \reg (J,x)\}.
\]
Here $(J,x)$ denotes the ideal $J+(x)$.
\end{lem}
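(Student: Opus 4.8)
I would deduce everything from the short exact sequence of graded $S$-modules
\[
0 \longrightarrow \bigl(S/(J:x)\bigr)(-1) \xrightarrow{\ \cdot x\ } S/J \longrightarrow S/(J,x) \longrightarrow 0,
\]
together with one non-formal input coming from the monomial structure of $J$. Applying $\Tor^S_\bullet(\kk,-)$ and reading off the definition $\reg M=\sup\{j-i:\Tor^S_i(\kk,M)_j\neq 0\}$ from the associated long exact sequence yields the three standard estimates valid for any short exact sequence $0\to A\to B\to C\to 0$: namely $\reg B\le\max\{\reg A,\reg C\}$, $\reg A\le\max\{\reg B,\reg C+1\}$ and $\reg C\le\max\{\reg B,\reg A-1\}$. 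Substituting $A=(S/(J:x))(-1)$, $B=S/J$, $C=S/(J,x)$, using $\reg A=\reg(S/(J:x))+1$, and translating back to ideals via $\reg I=\reg(S/I)+1$ for a nonzero proper ideal $I$ (we may assume $J\neq 0$ is proper and $x\notin J$, since if $x\in J$ then $J:x=S$ and $(J,x)=J$ and the claim is immediate), I obtain
\[
\reg J\le\max\{\reg(J:x)+1,\ \reg(J,x)\},
\]
\[
\reg(J:x)\le\max\{\reg J-1,\ \reg(J,x)\},\qquad \reg(J,x)\le\max\{\reg(J:x),\ \reg J\}.
\]

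The one ingredient that is not purely homological is the inequality $\reg(J:x)\le\reg J$; this is where I use that $J$ is a \emph{monomial} ideal and $x$ a \emph{variable}, invoking it from the cited literature (equivalently one proves $\reg(J,x)\le\reg J$ directly, e.g.\ by identifying $S/(J,x)$ with $S'/J'$ over $S'=S/(x)$, where $J'$ is generated by the generators of $J$ not divisible by $x$, and comparing minimal free resolutions). Granting $\reg(J:x)\le\reg J$, the third displayed estimate upgrades at once to $\reg(J,x)\le\max\{\reg(J:x),\reg J\}=\reg J$, which already gives the left half of the asserted chain, $\max\{\reg(J:x),\reg(J,x)\}\le\reg J$.

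For the right half, $\reg J\in\{\reg(J:x)+1,\reg(J,x)\}$, I would argue by cases off the first displayed estimate. If $\reg J\le\reg(J,x)$, then together with $\reg(J,x)\le\reg J$ one gets $\reg J=\reg(J,x)$. Otherwise $\reg(J,x)<\reg J$, hence $\reg(J,x)\le\reg J-1$; then the second estimate collapses to $\reg(J:x)\le\reg J-1$, i.e.\ $\reg(J:x)+1\le\reg J$, while the first estimate, since it cannot be realized through the term $\reg(J,x)<\reg J$, forces $\reg J\le\reg(J:x)+1$. Combining, $\reg J=\reg(J:x)+1$. In every case $\reg J$ equals one of the two prescribed values, which completes the proof.

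I expect the only genuine obstacle to be the monomial‑theoretic input $\reg(J:x)\le\reg J$ (equivalently $\reg(J,x)\le\reg J$): it fails for arbitrary graded ideals and linear forms, so its justification must track the combinatorics of the minimal generators of $J$ rather than relying solely on the short exact sequence above. Once that is in hand, the remainder is a formal manipulation of the three displayed estimates.
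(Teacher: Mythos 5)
Your formal manipulations are all correct: the three estimates you extract from the long exact sequence of $\Tor$ applied to $0\to (S/(J:x))(-1)\to S/J\to S/(J,x)\to 0$ translate, via $\reg I=\reg(S/I)+1$, into exactly the three displayed inequalities, and your case analysis then yields both halves of the asserted chain once the inequality $\reg(J:x)\le\reg J$ (equivalently, via your second and third estimates, $\reg(J,x)\le\reg J$) is granted. Note, however, that the paper does not prove this lemma at all: it is quoted verbatim from the literature, namely \cite[Lemma 2.10]{DHS13} and \cite[Corollary 3.3 and Theorem 4.7]{CHH+19}. So your route is genuinely different in that it isolates the single non-formal, monomial-theoretic ingredient and re-derives the full statement from it by homological bookkeeping — which is more informative than the paper's bare citation, and makes transparent where "monomial ideal and variable" is actually used (the membership $\reg J=\reg(J:x)+1$ in your Case 2 needs nothing beyond the exact sequence, while Case 1 and the left-hand bound do). One caveat: your parenthetical sketch of a "direct" proof of $\reg(J,x)\le\reg J$ — identifying $S/(J,x)$ with $S'/J'$ over $S'=S/(x)$ and "comparing minimal free resolutions" — is only a reformulation, since the resolution of $J'$ over $S'$ is not in any obvious way dominated by that of $J$; the honest content there is the restriction lemma for Betti numbers of monomial ideals (or \cite{CHH+19} itself), so if you keep that ingredient as a citation, as you propose, the argument is complete, but the sketch alone would not close it.
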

\subsection{Edge ideals and graphs}
\label{subsect_edgeideals}

Let $G$ be a simple graph on the vertex set $[n]=\{1,\ldots,n\}$, and let $E(G)$ be its edge set. The \emph{edge ideal} of $G$ is the following ideal of $R_n$:
\[
I(G)=(x_ix_j \mid \{i,j\} \in E(G)).
\]
An \emph{induced subgraph} $H$ of $G$ is a graph with vertex set $V(H)\subseteq V(G)$ and edge set $E(H)\subseteq E(G)$, such that two vertices $i, j\in V(H)$ are adjacent in $H$ if and only if they are adjacent in $G$.

The complementary graph of $G$ is denoted by $G^c$. The \emph{cycle} $C_m$ is the graph on $m\ge 3$ vertices $v_1,\ldots,v_m$ with the edge set
\[
E(C_m)=\left\{\{v_1,v_2\}, \{v_2,v_3\},\ldots,\{v_m,v_1\} \right\}.
\]
We call $m$ the \emph{length} of the cycle $C_m$. An \emph{anticycle} is the complementary graph $C_m^c$ of the cycle $C_m$ for some $m\ge 4$. By abuse of terminology, we call $m$ the \emph{length} of the anticycle $C_m^c$, and say that an anticycle has \emph{consecutive vertices} $v_1,v_2,\ldots,v_m$ if its complementary graph is the cycle with edges $v_1v_2,\ldots,v_{m-1}v_m,v_mv_1$.

We say that $G$ is \emph{chordal} if it has no induced cycle of length at least 4. Dually, we say $G$ is \emph{cochordal} if $G^c$ is chordal. 

For each $g\ge 1$, the graph $gK_2$ is a union of $g$ disjoint edges. The \emph{induced matching number} of $G$ is
\[
\indmatch(G)=\max\{g \mid \text{$G$ has an induced $gK_2$ subgraph}\}.
\]
Clearly $C_4^c$ is isomorphic to $2K_2$, therefore if $G$ is a cochordal graph with at least one edge, then as $G$ does not contain any induced $C_4^c$, $\indmatch(G)=1$.

The first part of the following lemma is contained in \cite[Lemma 2.2]{Ka06} and \cite[Lemma 7]{Wo14}. The second part is contained in \cite[Theorem 1]{Fr90}.
\begin{lem}
\label{lem_reg_indmatch}
Let $G$ be a graph with at least one edge. The following statements hold.
\begin{enumerate}[\quad \rm (1)]
 \item There is an inequality $\reg I(G)\ge 1+\indmatch(G)$.
 \item \textup{(Fr\"oberg's theorem)} We have $\reg I(G)=2$ if and only if $G$ is a cochordal graph.
\end{enumerate}
\end{lem}
For a subset $U$ of the vertex set $V(G)$ of $G$, the \emph{deletion} of $G$ to $U$, denoted $G\setminus U$, has the vertex set $V(G\setminus U):=V(G)\setminus U$ and the edge set
\[
E(G\setminus U):=\{\{x,y\} \in E(G) \mid x,y \in V(G)\setminus U\}.
\]
For a vertex $v\in G$, its \emph{open neighborhood} $N_G(v)$ is the set of vertices $u\neq v$ that are adjacent to $v$, and its \emph{closed neighborhood} is
\[
N_G[v]:= N_G(v) \cup \{v\}.
\]
Let $G$ be a graph on $[n]$, $v \in V(G)$ a vertex. Let $S=\kk[x_1,\ldots,x_n]$, $J$ a monomial ideal of $S$, and $y$ a linear form of $S$. We have a standard exact sequence
\[
0\to \left(\frac{S}{J:y}\right)(-1) \to \frac{S}{J} \to \frac{S}{(J,y)} \to 0.
\]
Using this sequence for $J=I(G)$ and $y=x_v$, we may prove the following statement.
\begin{lem}[{\cite[Lemma 3.1]{DHS13}}]
\label{lem_reg_edgeideal_deletion}
There is an inequality
\[
\reg I(G) \le \max\{\reg I(G\setminus N_G[v])+1, \reg I(G\setminus v)\}.
\]
\end{lem}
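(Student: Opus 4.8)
The plan is to prove \Cref{lem_reg_edgeideal_deletion} by applying the standard short exact sequence from the excerpt with $J=I(G)$ and $y=x_v$, and then identifying the two outer terms combinatorially. First I would recall that regularity is subadditive along short exact sequences in the sense that for
\[
0\to \left(\frac{S}{I(G):x_v}\right)(-1) \to \frac{S}{I(G)} \to \frac{S}{(I(G),x_v)} \to 0,
\]
one has $\reg(S/I(G)) \le \max\{\reg\big((S/(I(G):x_v))(-1)\big),\ \reg(S/(I(G),x_v))\} = \max\{\reg(S/(I(G):x_v))+1,\ \reg(S/(I(G),x_v))\}$, which is the standard consequence of the long exact sequence in $\Tor$ (if a middle term is nonzero in a certain homological/internal degree, one of the outer terms is). Translating from quotient rings to ideals via $\reg I(G) = \reg(S/I(G))+1$ converts this into $\reg I(G) \le \max\{\reg(I(G):x_v)+1,\ \reg(I(G),x_v)\}$.

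The remaining task is purely combinatorial: to show $(I(G),x_v)$ and $I(G):x_v$ are (up to polynomial extension by irrelevant variables, which does not change regularity) the edge ideals of $G\setminus v$ and $G\setminus N_G[v]$ respectively. For $(I(G),x_v)$: modulo $x_v$, every generator $x_ux_v$ of $I(G)$ involving $v$ becomes redundant, so $(I(G),x_v) = (x_v) + I(G\setminus v)$ as ideals of $S$, whence $\reg(I(G),x_v) = \reg I(G\setminus v)$ (passing to the quotient by the variable $x_v$, or noting that adjoining a variable to the ideal just drops a variable from the ambient ring). For $I(G):x_v$: a monomial $m$ lies in $I(G):x_v$ iff $mx_v\in I(G)$, which happens iff either $x_i\mid m$ for some neighbor $i$ of $v$, or $x_ix_j\mid m$ for some edge $\{i,j\}$ of $G$ not meeting $v$. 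Hence $I(G):x_v = (x_i : i\in N_G(v)) + I(G\setminus v)$; quotienting out the linear forms $x_i$ for $i\in N_G(v)$ kills exactly those edges of $G\setminus v$ incident to $N_G(v)$, leaving the edge ideal of $G\setminus N_G[v]$ in the smaller polynomial ring. Since regularity is unaffected by extending scalars by polynomial variables, $\reg(I(G):x_v) = \reg I(G\setminus N_G[v])$, and substituting both identifications into the displayed inequality yields the claim.

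I do not expect a serious obstacle here; the only points requiring a little care are bookkeeping ones. First, one must be careful that the convention $\reg I(G)=\reg(S/I(G))+1$ (and the degenerate cases where $G\setminus v$ or $G\setminus N_G[v]$ has no edges, so that the corresponding edge ideal is zero and one interprets $\reg 0$ appropriately — here the inequality holds trivially in those cases since a zero ideal contributes no constraint). Second, when identifying $(I(G),x_v)$ with an edge ideal one should note the ambient ring changes from $\kk[x_1,\dots,x_n]$ to the polynomial ring on the remaining variables, and invoke the standard fact that regularity is invariant under such a flat base change (adjoining polynomial variables). With these conventions fixed, the argument is a direct combination of the short exact sequence already displayed in the excerpt with the elementary colon and sum computations for monomial ideals. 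This is exactly the content of \cite[Lemma 3.1]{DHS13}, and the proof above is the standard one.
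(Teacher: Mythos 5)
Your proof is correct and follows exactly the route the paper hints at: it cites the statement from \cite[Lemma 3.1]{DHS13} and explicitly introduces the short exact sequence
\[
0\to \left(\tfrac{S}{J:y}\right)(-1) \to \tfrac{S}{J} \to \tfrac{S}{(J,y)} \to 0
\]
with $J=I(G)$, $y=x_v$, noting this yields the lemma but deferring details to the reference; you have supplied precisely those details (the identification of $(I(G),x_v)$ and $I(G):x_v$ with edge ideals up to free variables and variables killed, together with the standard regularity bound along a short exact sequence).

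One small caution on your remark about degenerate cases: when $G$ is a star centered at $v$, both $G\setminus v$ and $G\setminus N_G[v]$ have no edges, and with the convention $\reg(0)=-\infty$ (which this paper does use, e.g.\ in the proof of \Cref{thm_reg<=3}) the displayed inequality would read $2\le -\infty$ and is not literally ``trivially true.'' The correct accounting is that $I(G):x_v$ and $(I(G),x_v)$ are then nonzero ideals generated by variables, each of regularity $1$, so the exact-sequence bound still gives $\reg I(G)\le 2$; the passage to $\reg I(G\setminus N_G[v])$ and $\reg I(G\setminus v)$ only becomes an identity when those subgraphs have at least one edge. Since every application of \Cref{lem_reg_edgeideal_deletion} in the paper treats the edgeless case separately, this causes no harm, but it is worth phrasing more carefully than ``the inequality holds trivially.''
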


\subsection{Regularity of saturated chains}
\label{subsect_saturated}
The following result together with its simple proof serve as a motivation for Theorems \ref{thm_main} and \ref{thm_main2}.
\begin{lem}
\label{lem_reg_quasi-saturated}
 Let $\Icc=(I_n)_{n\ge 1}$ be a quasi-saturated chain of eventually nonzero edge ideals with $\ind(\Icc)=r$. Then we have an equality $\reg I_n=2$ for all $n\ge r$, i.e. $I_n$ has a $2$-linear resolution for all such $n$.
\end{lem}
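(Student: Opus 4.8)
The plan is to show that a quasi-saturated chain of edge ideals becomes cochordal for $n \ge r$, so Fröberg's theorem (\Cref{lem_reg_indmatch}(2)) gives $\reg I_n = 2$. The key structural input is that quasi-saturation severely constrains how edges of $G_n$ can ``spread out'' as $n$ grows: intuitively, if the chain has stability index $r$ and is quasi-saturated, the new generators created by applying $\Inc_{n,n+1}$ to $I_n$ cannot introduce an induced $2K_2$, because such a configuration, being ``witnessed'' at finite level, would already have to appear among the generators coming from $I_r$, contradicting the intersection condition $I_{n+1} \cap R_{p_n} = I_n \cap R_{p_n}$.

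First I would fix $n \ge r$, set $p = \Msupp(I_r)$, and recall from \Cref{lem_maxsupp} that $\Msupp(I_n) \le n - r + p$. I would describe $G_n$ via \Cref{lem_integralpoints} (the triangle-of-integral-points picture alluded to in the introduction): each edge $\{i,j\}$ with $i < j$ becomes a lattice point, and the $\Inc$-action translates points along the diagonal direction. The goal becomes: the point set has no ``induced $2K_2$'', i.e. no two edges $\{a,b\}$, $\{c,d\}$ with all four vertices distinct and none of the four cross pairs $\{a,c\},\{a,d\},\{b,c\},\{b,d\}$ in $E(G_n)$. I would argue that if such a pair existed in $G_n$, one could apply a suitable $\pi \in \Inc$ to pull both edges down into $R_{p_n}$ (or into $R_{p_{n-1}}$ at the previous level), keeping them disjoint and non-adjacent; then quasi-saturation forces these edges — together with the absence of the connecting edges — to be consistent at level $r$, but at level $r$ an induced $2K_2$ among generators of $I_r$ would propagate to all $G_n$ for $n \gg 0$ and in particular make $I_{n+1} \cap R_{p_n}$ strictly larger than $I_n \cap R_{p_n}$, the desired contradiction. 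So $G_n$ is $C_4^c$-free; combined with the fact (to be checked, but routine from the same spreading-out philosophy) that longer induced anticycles also cannot occur in a quasi-saturated chain, $G_n$ is cochordal.

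Alternatively — and this is probably the cleaner route to write — I would induct on $n$ using \Cref{prop_intersection_chain} and \Cref{lem_reg_mon}. For the base case $n = r$, cochordality (equivalently $\reg I_r = 2$) may fail in general, so one instead takes as base case the first $n_0 \ge r$ for which $I_{n_0} \ne 0$ and argues directly that quasi-saturation plus $\ind(\Icc) = r$ forces $G_{n_0}$ to be a complete multipartite graph or otherwise visibly cochordal. For the inductive step, write $\reg I_{n+1}$ in terms of $\reg(I_{n+1} : x_{n+1})$ and $\reg(I_{n+1}, x_{n+1})$ via \Cref{lem_reg_mon}: since $x_{n+1}$ only appears in generators $x_i x_{n+1}$, the colon ideal $(I_{n+1} : x_{n+1})$ is (up to the extra variable) an edge ideal of a graph on $[n]$ together with a set of isolated-or-not vertices, and $(I_{n+1}, x_{n+1})$ restricts to $I_{n+1} \cap R_n = I_n$ (using saturation-type behaviour at the top variable, which holds here because $\Msupp(I_{n+1}) \le n+1$ and quasi-saturation). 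One then checks that both pieces have regularity $2$, and that the regularity of $I_{n+1}$ cannot jump to $\reg(I_{n+1}:x_{n+1}) + 1 = 3$, again because a jump would manifest as an induced $2K_2$ that quasi-saturation forbids.

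The main obstacle I anticipate is making precise the ``an induced $2K_2$ in $G_n$ for large $n$ descends to an incompatibility with quasi-saturation'' step — one must carefully track which $\pi \in \Inc$ to apply, ensure the image lands in $R_{p_n}$, and verify that the four vertices stay distinct and non-adjacent after translation; this is exactly where the triangle-of-lattice-points picture of \Cref{lem_integralpoints} earns its keep, reducing the argument to an elementary but slightly fiddly statement about when two diagonal-translates of lattice points can be pushed into a bounded triangular region while avoiding a forbidden set. Everything else — the regularity bookkeeping via Lemmas \ref{lem_reg_mon} and \ref{lem_reg_edgeideal_deletion}, and the final appeal to Fröberg — is routine. (Indeed, since the statement only claims $\reg I_n = 2$, one never needs the harder $\reg \le 3$ machinery of later sections; quasi-saturation is precisely the hypothesis that kills the troublesome induced $2K_2$'s outright.)
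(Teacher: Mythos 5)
Your high-level plan (Fröberg plus cochordality of $G_n$) is the same as the paper's, but the decisive step is missing in both of your routes. Cochordality requires excluding induced anticycles $C_m^c$ of \emph{every} length $m\ge 4$; in your first route you only sketch the $2K_2=C_4^c$ case and defer $m\ge 5$ as ``routine from the same spreading-out philosophy'', yet that exclusion is essentially the entire content of the proof. Moreover, the mechanism you describe is not implementable: every $\pi\in\Inc$ satisfies $\pi(i)\ge i$, so you cannot ``pull both edges down into $R_{p_n}$'' by applying a suitable $\pi$, and your assertion that an induced $2K_2$ would ``make $I_{n+1}\cap R_{p_n}$ strictly larger than $I_n\cap R_{p_n}$'' is never justified. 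The actual argument runs in the opposite direction and handles all $m\ge 4$ uniformly: if $i_1,\dots,i_m$ is an induced cycle in $G_n^c$ with $i_1$ minimal and $i_2>i_j$ for some $3\le j\le m-1$, then from the edge $x_{i_1}x_{i_j}\in I_n$, $\Inc$-invariance pushes it \emph{up} to give $x_{i_1}x_{i_2}\in I_{n+i_2-i_j}$, and quasi-saturation (which yields $I_s\cap R_{p_n}\subseteq I_n$ for all $s\ge n$, $p_n=\Msupp(I_n)$) pulls this membership back into $I_n$, contradicting that $\{i_1,i_2\}$ is a non-edge of $G_n$; repeating this forces $i_2$ and $i_m$ to be smaller than all of $i_3,\dots,i_{m-1}$, and one more application of the same trick to the pair $\{i_2,i_m\}$ gives the final contradiction. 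No appeal to \Cref{lem_integralpoints} or to ``propagation to level $r$'' is needed.

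Your ``cleaner'' inductive route contains concrete errors. First, the base case cannot be postponed: the lemma asserts $\reg I_r=2$, so cochordality at $n=r$ is part of what must be proved, and the claim that $G_{n_0}$ is complete multipartite (or otherwise visibly cochordal) is unsupported. Second, quasi-saturation says $I_{n+1}\cap R_{p_n}=I_n\cap R_{p_n}$ with $p_n=\Msupp(I_n)$, \emph{not} $I_{n+1}\cap R_n=I_n$; the latter is saturatedness, and when $p_n<n$ the ideal $I_{n+1}\cap R_n$ can be strictly larger than $I_n$ for a quasi-saturated chain, so your identification of $(I_{n+1},x_{n+1})$ with $I_n$ fails. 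Third, the claim that a jump to $\reg(I_{n+1}:x_{n+1})+1=3$ ``would manifest as an induced $2K_2$'' is false: as \Cref{ex_not_weaklychordal} shows, a chain can have $\indmatch(G_n)=1$ while $\reg I_n=3$ for all large $n$, because regularity $3$ can come from longer induced anticycles. So excluding $2K_2$'s is not enough; what is needed is precisely the uniform anticycle exclusion you deferred.
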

\begin{proof}
By Fr\"oberg's theorem (\Cref{lem_reg_indmatch}(2)), we have to show that the graph $G_n$ corresponding to $I_n$ is cochordal for all $n\ge r$.

Assume the contrary, that $i_1,\ldots,i_m$ is an induced cycle of length $m\ge 4$ in $G_n^c$ for some $n\ge r$. By reindexing, we may assume that $i_1=\min\{i_j: 1\le j\le m\}$. Note that $i_1,\ldots,i_m$ are pairwise distinct.

We claim that $i_2 < \min\{i_3,\ldots,i_{m-1}\}$. If this is not the case, then $i_2 > i_j$ for some $3\le j\le m-1$. Since $\{i_1,i_j\}\notin G_n^c$, $x_{i_1}x_{i_j}\in I_n$, and the $\Inc$-invariance of $\Icc$ implies that $x_{i_1}x_{i_2}\in I_{n+i_2-i_j}$. Let $p=\Msupp(I_n)$. The quasi-saturatedness of $\Icc$ implies that $I_s\cap R_p \subseteq I_n$ for all $s\ge n$. Now $x_{i_1}x_{i_2} \in I_{n+i_2-i_j} \cap R_p \subseteq I_n$, hence $\{i_1,i_2\}\in G_n$, a contradiction. Similarly $i_m < \min\{i_3,\ldots,i_{m-1}\}$.

Hence $\min\{i_2,i_3,\ldots,i_m\}=\min\{i_2,i_m\}$. We assume that $\min\{i_2,i_3,\ldots,i_m\}=i_2$, the remaining case being similar. Now $\{i_2,i_m\}\notin G_n^c$, so $x_{i_2}x_{i_m}\in I_n$. From above $i_m<i_3$, so arguing as in the last paragraph, we deduce $x_{i_2}x_{i_3}\in I_n$. This means $\{i_2,i_3\} \in G_n$, a contradiction. Hence $G_n$ is cochordal for all $n\ge r$. The proof is completed.
\end{proof}
\begin{ex}
The behaviour of the regularity for non-quasi-saturated chains of edge ideals is much more delicate. For instance, let $(I_n)_{n\ge 1}$ be the $\Inc$-invariant chain with $I_n=(0)$ for $n\le 9$, $I_{10}=(x_1x_{10},x_2x_4,x_3x_5,x_7x_9)$ and stability index $r=10$. Computations with Macaulay2 \cite{GS96} give the following data.
 \begin{table}[ht!]
  \begin{tabular}{|c | c | c | c | c | c | c | c |c | c | c |}
  \hline
  $n$ & 10 & 11 & 12 & 13 & 14 & 15 & 16 & 17 & 18 & 19 \\
  \hline
  $\reg I_n$ & 5 & 4 & 3 & 4 & 4 & 3 & 3 & 3 & 3 & 2 \\
  \hline
  \end{tabular}
 \end{table}
 
In particular, the sequence $(\reg I_n)_{n\ge 1}$ might be fairly irregular at the beginning. By \Cref{prop_reg2_all-embracinginterval}, we have $\reg I_n=2$ for all $n\ge 3r=30$.
\end{ex}

\section{Induced matchings}
\label{sect_inducedmatchings}
The goal of this section is to prove
\begin{thm}
\label{thm_no3K2}
Let $\Icc=(I_n)_{n\ge 1}$ be an $\Inc$-invariant chain of eventually nonzero edge ideals with the stability index $\ind \Icc=r\ge 1$. Let $G_n$ be the graph corresponding to $I_n$. Then for all $n\ge 3r$, the graph $G_n$ does not contain any induced $3K_2$ subgraph. In other words, we have $\indmatch(G_n)\le 2$ for all $n\ge 3r$. 

Furthermore, $\indmatch(G_n)=\indmatch(G_{n+1})$ for all $n\ge 3r$.
\end{thm}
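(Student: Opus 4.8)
The plan is to run everything through the picture from \Cref{lem_integralpoints}. Writing $e=\{i,j\}$ (always with $i<j$) for an edge of $G_r$, that lemma identifies a pair $\{a,b\}$ with $a<b$ as an edge of $G_n$ exactly when $(a,b)$ is an integral point of one of the right isosceles triangles $T_n(e)$, where $T_n(e)$ has lower corner $(i,j)$, legs of length $n-r$ on the lines $\{a=i\}$ and $\{b=n-r+j\}$, and hypotenuse on $\{b=a+(j-i)\}$; equivalently, $\{a,b\}\in E(G_n)$ iff there is $e=\{i,j\}\in E(G_r)$ with $i\le a$, $j-i\le b-a$ and $b\le n-r+j$. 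Two formal consequences will be used throughout. First, monotonicity: if $\{a,b\}\in T_n(e)$ then $\{a',b'\}\in T_n(e)$ for all $i\le a'\le a$ and $a'+(j-i)\le b'\le n-r+j$. Second, one edge per triangle: in an induced matching of $G_n$ no two edges can lie in the same $T_n(e)$, for if disjoint edges $f=\{a,b\}$ and $f'=\{a',b'\}$ with $a<a'$ both lie in $T_n(e)$, then monotonicity puts $\{a,b'\}\in T_n(e)\subseteq E(G_n)$, a cross edge; this already gives $\indmatch(G_n)\le|E(G_r)|$, but of course we must do much better. Finally recall $\ind\Icc=r$ forces $I_r\neq 0$, so $\indmatch(G_n)\ge 1$ for all $n\ge r$; the content of the theorem is the bound $\le 2$ and the stabilization.

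The next step is the $2K_2$ analysis, which is the role of Lemmas \ref{lem_twopoints_corners} and \ref{lem_twopoints_induced2K2}. For disjoint edges $f=\{a,b\}$, $f'=\{a',b'\}$ with $a<a'$, generators $e=\{i,j\}$, $e'=\{i',j'\}$, and no cross edge between $f$ and $f'$, I would test the candidate cross edges $\{a,a'\},\{a,b'\},\{b,a'\},\{b,b'\}$ against $T_n(e)$ and $T_n(e')$ via the membership criterion; each forced non‑membership produces one inequality, and collating them yields a clean dichotomy. If $b'>b$ (the intervals $[a,b]$, $[a',b']$ are interleaved or separated), then $\{a,b'\}\notin E(G_n)$ forces $i<i'$ and $b'>n-r+j$, hence also $j<j'$ since $b'\le n-r+j'$. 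If $b'<b$ (so $[a',b']\subseteq[a,b]$, a nested pair), then $\{a',b\}\notin E(G_n)$ and $\{a,a'\}\notin E(G_n)$ force $b-a'<j-i\le r-1$ and $a'-a<j-i\le r-1$, so $b-a<2r$ and all four endpoints lie in a window of length $<2r$. Thus two disjoint edges of $G_n$ with no cross edge are either "nested and tiny" or "progressive" (both $i,j$ strictly increase and the right edge pokes past the right boundary of the left triangle).

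Now suppose, for contradiction, that $G_n$ with $n\ge 3r$ has an induced $3K_2$, say $f_1,f_2,f_3$ ordered so $a_1<a_2<a_3$, with generators $e_t=\{i_t,j_t\}$ and $p:=\Msupp(I_r)\le r$ (so every $j_t\le p$ and every vertex is $\le n-r+p$ by \Cref{lem_maxsupp}). Apply the dichotomy to all three pairs. If every pair is progressive, then $j_1<j_2<j_3\le p\le r$ together with $b_2>n-r+j_1$, $b_3>n-r+j_2$; feeding these back into $a_t\ge i_t$, $b_t\le n-r+j_t$, and the further tests (e.g.\ $\{a_1,a_3\}\notin E(G_n)$), one runs out of room to place the three intervals inside $[1,n-r+p]$ precisely once $n\ge 3r$. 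If some pair is nested, then the inner and outer edges are tiny and confined to a window of length $<2r$; re‑applying the dichotomy to the third edge pins all six vertices into a single window $W$ with $|W|<2r$ lying in $[1,n-r+p]$ with room to spare (this is where $n\ge 3r$ enters), and a translate of a suitable edge of $G_r$ can then be slid into $W$, producing a cross edge among $f_1,f_2,f_3$. In all cases we reach a contradiction, so $\indmatch(G_n)\le 2$ for $n\ge 3r$. The genuinely laborious part — and, in my estimation, the main obstacle — is organizing the casework on the mutual position of the three intervals (especially the mixed subcases, some pairs nested and some progressive) and turning the accumulated inequalities into an airtight count that fails exactly below $3r$.

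For the stabilization $\indmatch(G_n)=\indmatch(G_{n+1})$ with $n\ge 3r$, both sides lie in $\{1,2\}$ by the above, so it suffices to show that "$\indmatch(G_n)=2$" is equivalent to an $n$‑independent property of $G_r$. Using monotonicity and the progressive case of the dichotomy, any induced $2K_2$ of $G_n$ can be normalized so that its left edge is an actual edge $\{i_1,j_1\}\in E(G_r)$ sitting in $[1,r]$ and its right edge is the "bottom‑right corner" $\{\,n-r+i_2,\ n-r+j_2\,\}$ of $T_n(e_2)$ for some $e_2=\{i_2,j_2\}\in E(G_r)$; unwinding the requirement that these two be disjoint with no cross edge via the membership criterion turns it into a condition on $G_r$ alone — existence of $e_1,e_2$ together with the absence in $G_r$ of a few connecting edges — with no reference to $n$ once $n\ge 3r$. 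Hence $\indmatch(G_n)=2$ for one such $n$ iff for all such $n$, giving the equality. (One can instead prove "$\le$" and "$\ge$" separately; the delicate direction is "$\ge$", where a $2K_2$ of $G_{n+1}$ must be pulled back along a single map $\sigma_k\in\Inc_{n,n+1}$ and one checks that the admissible sets of $k$ for the two edges — each a union of intervals — are forced to overlap; the normalization route above sidesteps this.)
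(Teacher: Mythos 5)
Your overall strategy (translate to the triangles of \Cref{lem_integralpoints}, analyze the cross pairs of a would-be induced $2K_2$, then push to three edges) is the same as the paper's, but the decisive step is not carried out. The dichotomy you extract from testing cross pairs is strictly weaker than what \Cref{lem_twopoints_induced2K2} actually gives: in your ``progressive'' case you only record $i<i'$, $j<j'$ and $b'>n-r+j$, whereas the three-edge contradiction needs the two stronger facts $b<i'$ (the right endpoint of the left edge lies below the start of the right triangle) and $a'>n-r+j$ (the left endpoint of the right edge lies beyond the right boundary of the left triangle). With those, ordering the three generators by first coordinates and applying the lemma to the pairs $(1,2)$ and $(2,3)$ gives $n-r+j_1<a_2\le b_2<i_3\le r$, i.e.\ $n<2r$, and the theorem follows immediately (this is exactly \Cref{cor_no3K2_explicit} in the paper). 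With only your inequalities, the claim that ``one runs out of room once $n\ge 3r$'' is an assertion, not a derivation, and you acknowledge that organizing the three-interval casework is the unresolved obstacle. Moreover, the nested case you retain is a phantom: the hypotheses of \Cref{lem_twopoints_induced2K2} are met for any induced $2K_2$ (a point $(x,y)$ with $x>y$ lies in no triangle, so all four listed points are excluded), and its conclusion $v_1<i_2<n+j_1<u_2$ rules nesting out altogether; your sketched handling of it is also incorrect as stated, since if the third edge is progressive relative to a nested pair the six vertices do not lie in a window of length $<2r$, and even when they do, sliding a translate of an edge of $G_r$ into that window produces some edge of $G_n$, not necessarily a cross edge between two of the six chosen vertices, which is what inducedness would have to violate.

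For the stabilization $\indmatch(G_n)=\indmatch(G_{n+1})$, your normalization idea (replace the left edge by the generator $\{i_1,j_1\}$ and the right edge by the corner $\{n-r+i_2,\,n-r+j_2\}$, reducing ``$\indmatch=2$'' to an $n$-free condition on $G_r$, essentially the nonexistence of a generator $\{i_p,j_p\}$ with $i_p\le j_1$ and $j_p\ge i_2$) is genuinely different from the paper, which instead shifts a $2K_2$ of $G_{n+r}$ by one unit to get one in $G_{n+r\pm 1}$, and your route can be made to work and is arguably cleaner. But checking that the normalized pair remains induced again requires exactly the inequalities $v_1<i_2$ and $u_2>n-r+j_1$ (they make the gap conditions automatic for $n\ge 3r$, so that decreasing the small endpoints and increasing the large endpoints preserves non-edges), i.e.\ the full strength of \Cref{lem_twopoints_induced2K2}, which your dichotomy does not deliver; as written the normalization is asserted rather than verified. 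So both halves of your argument rest on missing verifications, and the absence of the quantitative contradiction in the $3K_2$ part is a genuine gap.
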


The main work is done by two key lemmas \ref{lem_twopoints_corners} and \ref{lem_twopoints_induced2K2}. In the sequel, we use the following notations. 
\begin{notn}
For a point $(i,j)\in \R^2$ and a positive real number $n$, denote by $\Delta((i,j),n)$ the right isosceles triangles with the vertices $(i,j), (i,j+n), (i+n,j+n)$. 
\end{notn}

For example, \Cref{fig_triangles} depicts the triangles $\Delta((2,7),2)$ and $\Delta((3,4),2)$. Questions about $\Inc$-invariant chains of edge ideals can be translated into questions about integral points in the triangles $\Delta((i,j),n)$ via the following simple but crucial observation. Below, for $x,y\in \R$, denote by $(x,y)^{\le}$ the point $(\min(x,y),\max(x,y))\in \R^2$.

\begin{lem}
\label{lem_integralpoints}
Let $1\le i< j\le r$ and $n\ge r$ be positive integers. Then for integers $u<v$, the following are equivalent:
\begin{enumerate}[\quad \rm (1)]
 \item $x_ux_v\in \Inc_{r,n}(x_ix_j)$;
 \item $(u,v)\in \Delta((i,j),n-r)$.
\end{enumerate}
In particular, let $\Icc=(I_n)_{n\ge 1}$ be an $\Inc$-invariant chain of eventually nonzero edge ideals with $r=\ind(\Icc)$, and denote $E(G_r)=\{\{i_1,j_1\},\ldots,\{i_s,j_s\}\}$ where $1\le i_p < j_p\le r$ for $p=1,\ldots,s$. Then for each $n\ge 0$,
\[
E(G_{n+r})=\bigcup_{p=1}^s \left\{\{u,v\}: u,v\in \Z, (u,v)^\le \in \Delta((i_p,j_p),n) \right\}.
\]
\end{lem}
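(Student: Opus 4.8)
The plan is to prove the equivalence of (1) and (2) first, and then derive the displayed formula for $E(G_{n+r})$ as a more or less formal consequence. For the equivalence, I would unwind the definition of $\Inc_{r,n}(x_ix_j)$: a monomial $x_ux_v$ (with $u<v$) lies in this set precisely when there exists $\pi\in\Inc$ with $\pi(r)\le n$ such that $\{\pi(i),\pi(j)\}=\{u,v\}$. Since $i<j$ and $\pi$ is strictly increasing, necessarily $\pi(i)=u$ and $\pi(j)=v$. So the task reduces to a purely combinatorial existence question: given $1\le i<j\le r$ and $u<v$, when can a strictly increasing $\pi:\N\to\N$ with $\pi(r)\le n$ satisfy $\pi(i)=u$, $\pi(j)=v$? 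First I would record the obvious necessary conditions. Strict monotonicity from position $i$ forces $u\ge i$; strict monotonicity between positions $i$ and $j$ forces $v-u\ge j-i$; and strict monotonicity from position $j$ up to $r$ forces $v\le n-(r-j)$, i.e.\ $v-j\le n-r$, hence $v\le j+(n-r)$. Then I would check these three inequalities are also sufficient: given $u\ge i$, $v-u\ge j-i$, $v\le j+(n-r)$, one constructs $\pi$ explicitly, e.g.\ by taking $\pi$ affine-linear-ish on $[1,i]$ landing at $u$, on $[i,j]$ landing at $v$, and on $[j,r]$ using the slack $v\le j+(n-r)$ to keep $\pi(r)\le n$, and extending arbitrarily (say by $+1$ each step) beyond $r$. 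Concretely: set $\pi(k)=u-(i-k)$ for $k\le i$, then interpolate strictly increasingly from $u$ to $v$ on $\{i,\dots,j\}$ (possible since $v-u\ge j-i$), then set $\pi(k)=v+(k-j)$ for $j\le k\le r$ (this gives $\pi(r)=v+r-j\le n$ by the third inequality), and $\pi(k)=\pi(r)+(k-r)$ for $k>r$.

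The second step is the translation of these three inequalities into the geometric statement $(u,v)\in\Delta((i,j),n-r)$. Writing $m=n-r\ge 0$, the triangle $\Delta((i,j),m)$ has vertices $(i,j)$, $(i,j+m)$, $(i+m,j+m)$; its defining inequalities are $x\ge i$ (left edge, from $(i,j)$ to $(i,j+m)$), $y\le j+m$ (top edge, from $(i,j+m)$ to $(i+m,j+m)$), and $y-x\ge j-i$, equivalently $y\ge x+(j-i)$ (hypotenuse, from $(i,j)$ to $(i+m,j+m)$). These are exactly $u\ge i$, $v\le j+m = j+(n-r)$, and $v-u\ge j-i$ — precisely the three conditions from the first step, with $(x,y)=(u,v)$. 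So (1)$\iff$(2) follows immediately. A small remark: one should note the strictness $u<v$ is automatic here since $v-u\ge j-i\ge 1$, so no conflict arises with the convention in the lemma statement.

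For the final displayed formula, I would argue as follows. Since $r=\ind(\Icc)$, for $N:=n+r\ge r$ we have $I_N = \langle \Inc_{r,N}(I_r)\rangle_{R_N}$, and because $I_r=I(G_r)$ is generated by the squarefree quadratic monomials $x_{i_p}x_{j_p}$ ($p=1,\dots,s$), the set $\Inc_{r,N}(I_r)$ — which is by definition $\bigcup_p\Inc_{r,N}(x_{i_p}x_{j_p})$ together with whatever $\Inc_{r,N}$ does, but since each $\pi$ sends generators to generators — consists exactly of the monomials $\bigcup_{p=1}^s\Inc_{r,N}(x_{i_p}x_{j_p})$. Each such monomial is a squarefree quadratic $x_ux_v$, $u<v$, and $I_N$ is again an edge ideal, so $E(G_N)=\{\{u,v\}: x_ux_v\in \Inc_{r,N}(I_r)\}$. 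Applying the equivalence (1)$\iff$(2) with this $n$ replaced by $N=n+r$ (so that $N-r=n$) to each generator $x_{i_p}x_{j_p}$ gives $\{u,v\}\in E(G_{n+r})$ iff $(u,v)^{\le}\in\Delta((i_p,j_p),n)$ for some $p$, which is the asserted union. I expect no serious obstacle here; the only things to be careful about are (a) confirming that $\Inc$-invariance really does reduce the orbit of a quadratic monomial to quadratic monomials of the stated form (true because each $\pi\in\Inc$ is injective, so squarefreeness and degree are preserved and $\pi(i)<\pi(j)$), and (b) bookkeeping the shift $n\leftrightarrow n-r$ between the two parts of the lemma. The genuinely substantive content is the sufficiency direction of the first step — exhibiting an explicit $\pi\in\Inc_{r,n}$ realizing a prescribed pair $(u,v)$ subject to the three inequalities — and that is a short elementary construction as sketched above.
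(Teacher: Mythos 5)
Your proposal is correct and follows essentially the same route as the paper: both directions reduce to the chain $0\le u-i\le v-j\le n-r$ (your three inequalities are exactly the defining inequalities of $\Delta((i,j),n-r)$), and your explicit $\pi$ is, up to the choice of interpolation on $\{i,\ldots,j\}$, the same piecewise map the paper uses, with the edge-set formula then deduced from $I_{n+r}=\langle \Inc_{r,n+r}(I_r)\rangle_{R_{n+r}}$ just as in the paper.
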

\begin{proof}
The second assertion follows from the fact that $I_{n+r}=\langle \Inc_{r,n+r}(I_r) \rangle_{R_{n+r}}$ and the first assertion. Let us prove the latter.

 (1) $\Rightarrow$ (2): Since $x_ux_v\in \Inc_{r,n}(x_ix_j)$, for some $\pi\in \Inc_{r,n}$, we have $u=\pi(i), v=\pi(j)$. Since $\pi$ is increasing, $\pi(r)\le n$, and $1\le i<j\le r$, we have
 \[
0\le \pi(i)-i \le \pi(j)-j \le \pi(r)-r \le n-r.
 \]
 This implies $(u,v)\in \Delta((i,j),n-r)$.

  \begin{center}
 \begin{figure}[ht!]
  \includegraphics[width=32ex]{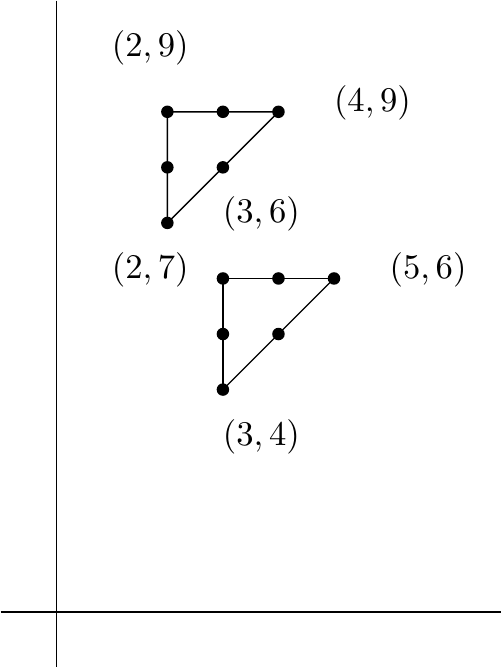} 
  \caption{An invariant chain with $I_7=(x_2x_7,x_3x_4)$}
  \label{fig_triangles}
 \end{figure} 
 \end{center}
 
 (2) $\Rightarrow$ (1): Assume that $(u,v)\in \Delta((i,j),n-r)$. Then there is a chain
 \[
0\le u-i\le v-j\le n-r.
 \]
Choose $\pi: \N \to \N$ such that 
$$\pi(t)= \begin{cases}
           t+u-i, &\text{if $t\le j-1$},\\
           t+v-j, &\text{if $t\ge j$}.
       \end{cases}
$$
Then $\pi \in \Inc_{r,n}$, as $\pi(r)=r+v-j\le n$. It is also clear that $\pi(i)=u, \pi(j)=v$, hence $x_ux_v=\pi(x_ix_j)\in \Inc_{r,n}(x_ix_j)$, as desired.
\end{proof}
\begin{ex}
Let $\Icc$ be an $\Inc$-invariant chain with $I_7=(x_2x_7,x_3x_4)$ and $I_n=\Inc_{7,n}(I_7)$ for $n\ge 7$. Then using \Cref{lem_integralpoints}, we can compute
\begin{align*}
&I_9= \langle \Inc_{7,9}(x_2x_7) \rangle_{R_9}+ \langle \Inc_{7,9}(x_3x_4) \rangle_{R_9}\\
&= (x_2x_7,x_2x_8,x_2x_9,x_3x_8,x_3x_9,x_4x_9)+(x_3x_4,x_3x_5,x_3x_6,x_4x_5,x_4x_6,x_5x_6). 
\end{align*}
See Figure \ref{fig_triangles}. More generally, for each $n>7$, the generators of the ideal $I_n$ correspond to the lattices points in the union of the two triangles $\Delta((2,7),n-7)$ and $\Delta((3,4),n-7)$.
\end{ex}

Let $A_1=(i_1,j_1), A_2=(i_2,j_2)$, $A_3=(i_3,j_3)\in \R_{\ge 1}^2$ be three given points such that $i_p< j_p$. For each $n$, let $\Delta_{p,n}=\Delta((i_p,j_p),n)$.

The first key lemma in this section shows that if  $A_1=(i_1,j_1), A_2=(i_2,j_2) \in \R_{\ge 1}^2$ are such that $i_p< j_p, i_1\le i_2$, $n$ is sufficiently large, and $(u_p,v_p)\in \Delta_{p,n}$, $p=1,2$ are such that the rectangle with vertices $(u_i,v_j)$, $1\le i,j\le 2$ has exactly two vertices in $\bigcup_{p=1}^2\Delta_{p,n}$ (namely $(u_1,v_1)$ and $(u_2,v_2)$), then $u_1$ has to be small, while $v_2$ has to be large. In particular, $(u_1,v_1)$ has to lie to the left of $\Delta_{2,n}$, while $(u_2,v_2)$ has to lie above $\Delta_{1,n}$ (see Figure \ref{fig_twopoints}).

\begin{lem}
\label{lem_twopoints_corners}
Assume that $A_1=(i_1,j_1), A_2=(i_2,j_2) \in \R_{\ge 1}^2$ are \textup{(}not necessarily distinct\textup{)} points such that $i_p< j_p, i_1\le i_2$. Let $n\ge \max\{2(j_1-i_1+i_2-j_2),1\}$ be an integer. Let $(u_p,v_p)\in \Delta_{p,n}$, $p=1,2$ be two points such that $(u_1,v_2), (u_2,v_1) \notin \bigcup_{p=1}^2\Delta_{p,n}$. Then the following inequalities hold:
\begin{enumerate}[\quad \rm (1)]
 \item $u_1< v_1, u_2<v_2$;
 \item $u_1<i_2$ and $v_2> j_1+n$;
 \item $i_1<i_2$ and $j_1<j_2$, thus $A_1$ and $A_2$ are distinct.
\end{enumerate}
\end{lem}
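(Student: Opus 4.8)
The plan is to work entirely with the explicit description of the triangles. Write $a=j_1-i_1$ and $b=j_2-i_2$, both positive since $i_p<j_p$. The triangle $\Delta_{p,n}$ is exactly the set of $(x,y)$ with $x\ge i_p$, $y\le j_p+n$, and $y-x\ge j_p-i_p$. In particular any point of $\Delta_{p,n}$ satisfies $y-x\ge j_p-i_p>0$, and applying this to $(u_p,v_p)$ gives (1) immediately. The same description lets me rewrite the non-membership hypotheses as a short list of strict inequalities: since $u_1\ge i_1$, $u_2\ge i_2\ge i_1$, $v_1\le j_1+n$ and $v_2\le j_2+n$ already hold (the hypothesis $i_1\le i_2$ is used here), the condition $(u_2,v_1)\notin\Delta_{1,n}$ forces $v_1<u_2+a$; the condition $(u_1,v_2)\notin\Delta_{2,n}$ forces $u_1<i_2$ or $v_2<u_1+b$; the condition $(u_1,v_2)\notin\Delta_{1,n}$ forces $v_2>j_1+n$ or $v_2<u_1+a$; and $(u_2,v_1)\notin\Delta_{2,n}$ forces $v_1>j_2+n$ or $v_1<u_2+b$. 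Membership of $(u_1,v_1)$ and $(u_2,v_2)$ additionally supplies $v_1\ge u_1+a$ and $v_2\ge u_2+b$.

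Next I would extract the cheap consequences. Combining $v_1\ge u_1+a$ with $v_1<u_2+a$ gives $u_1<u_2$. Feeding this into the dichotomy for $(u_1,v_2)\notin\Delta_{2,n}$: the alternative $v_2<u_1+b$ together with $v_2\ge u_2+b$ would force $u_2<u_1$, a contradiction; hence $u_1<i_2$. Since also $u_1\ge i_1$, this already yields $i_1<i_2$, hence $A_1\neq A_2$, so most of (3) and the first half of (2) are in hand.

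The heart of the proof is the inequality $v_2>j_1+n$, which I would obtain by contradiction. Assume $v_2\le j_1+n$; then the dichotomy from $(u_1,v_2)\notin\Delta_{1,n}$ gives $v_2<u_1+a$. Now split on the dichotomy from $(u_2,v_1)\notin\Delta_{2,n}$. If $v_1<u_2+b$, then $u_1+a\le v_1<u_2+b$ and $u_2+b\le v_2<u_1+a$ yield $u_2-u_1>a-b$ and $u_2-u_1<a-b$ simultaneously, which is impossible. If instead $v_1>j_2+n$, combine it with $v_1<u_2+a$ to get $u_2>j_2+n-a=i_2+b+n-a$, hence $v_2\ge u_2+b>i_2+2b+n-a$; on the other hand $v_2<u_1+a<i_2+a$ using $u_1<i_2$, so $i_2+2b+n-a<i_2+a$, i.e. $n<2(a-b)=2(j_1-i_1+i_2-j_2)$, contradicting the hypothesis on $n$. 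Thus $v_2>j_1+n$, and since $v_2\le j_2+n$ we conclude $j_1<j_2$, finishing (2) and (3). The step requiring care is this last chain of inequalities, since it must consume the bound $n\ge 2(j_1-i_1+i_2-j_2)$ exactly; note that when this quantity is nonpositive the same chain produces $n<2(j_1-i_1+i_2-j_2)\le 0$, contradicting $n\ge1$, so no separate case analysis is needed, and the hypothesis $n\ge1$ is used only to keep the triangles nondegenerate.
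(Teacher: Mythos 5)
Your proof is correct, and after unpacking the notation it follows essentially the same route as the paper: translate each non-membership hypothesis into a failure of the three-inequality chain characterizing $\Delta_{p,n}$, deduce $u_1<u_2$ and then $u_1<i_2$, and finally, supposing $v_2\le j_1+n$, chase inequalities ($v_2<u_1+a$, then $v_1>j_2+n$, then $u_2>j_2+n-a$, then the final contradiction $n<2(a-b)$) to violate the lower bound on $n$. Your "Case 1 vs. Case 2" split on the dichotomy from $(u_2,v_1)\notin\Delta_{2,n}$ is exactly the paper's observation that the first two inequalities of $0\le u_2-i_2\le v_1-j_2\le n$ do hold (since $u_2-i_2\le v_2-j_2<v_1-j_2$), so only the last can fail; the presentations differ cosmetically but the substance is identical.
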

\begin{proof}

Denote $\Delta_p=\Delta_{p,n}$ for short. Note that 
$$
\Delta_p=\{(u,v)\in \R^2: 0\le u-i_p \le v-j_p \le n\}.
$$

For (1): Since $v_p\ge u_p+j_p-i_p$ and $j_p>i_p$, we get $v_p>u_p$, namely (1) is true.

For (2): By the  hypothesis $(u_2,v_1)\notin \Delta_1$, so the following chain is invalid 
\[
0\le u_2-i_1 \le v_1-j_1\le n.
\] 
But $u_2\ge i_2$, so $u_2-i_1\ge i_2-i_1\ge 0$, namely the first inequality of the last chain is valid. The last inequality is also valid since $(u_1,v_1)\in \Delta_1$. Hence the middle one is false, namely 
$$
u_2-i_1 > v_1-j_1 \ge u_1-i_1.
$$
The last inequality follows from $(u_1,v_1)\in \Delta_1$. From the last chain, we get $u_1 < u_2$.

 \begin{center}
 \begin{figure}[ht!]
  \includegraphics[width=70ex]{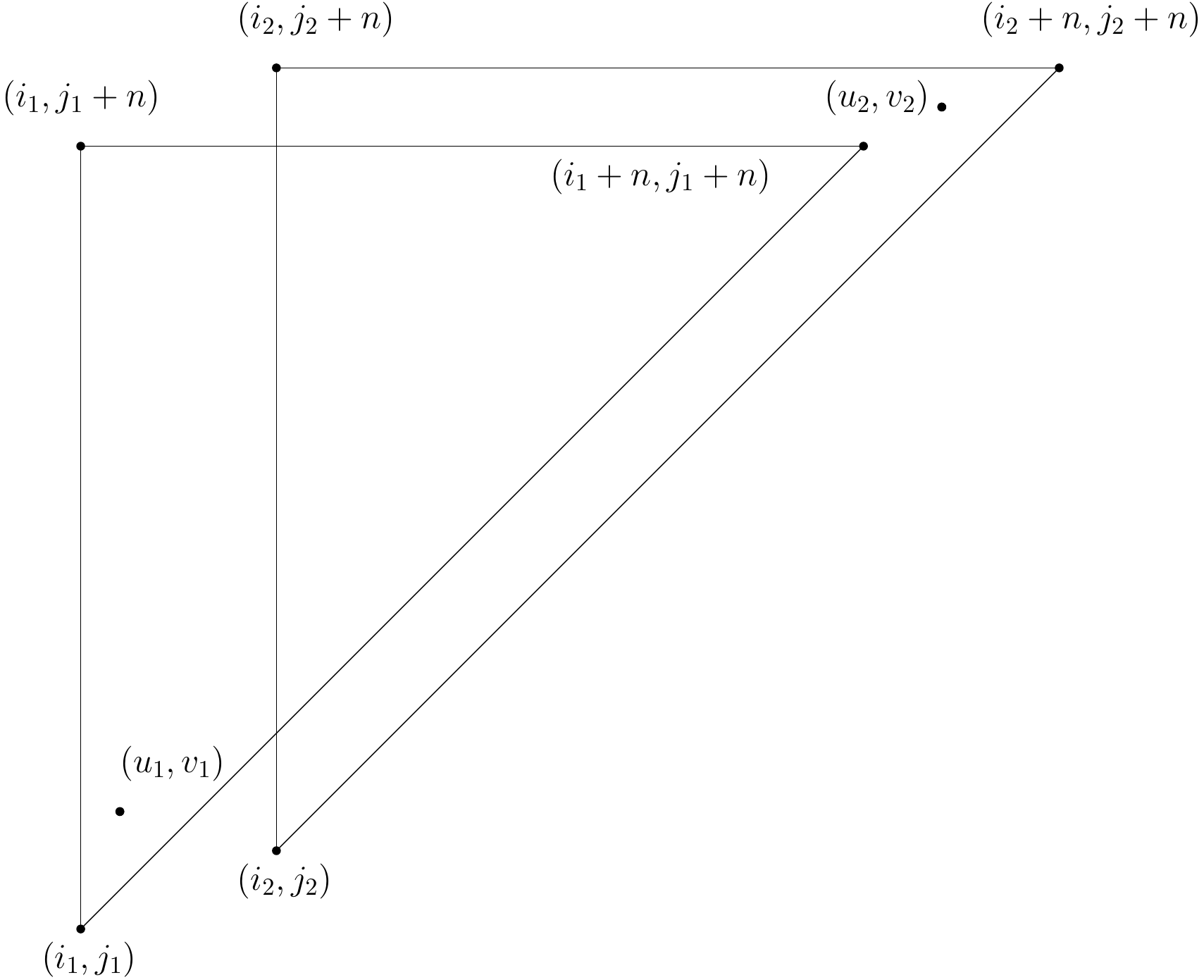} 
  \caption{Two points $(u_1,v_1), (u_2,v_2)$ as in \Cref{lem_twopoints_corners}}
  \label{fig_twopoints}
 \end{figure} 
 \end{center}

By the hypothesis $(u_1,v_2)\notin \Delta_2$, so the following chain is invalid
\[
0\le u_1-i_2  \le v_2-j_2 \le n.
\]
The middle and the last inequalities are true since from $(u_2,v_2)\in \Delta_2$, and $u_1<u_2$, we deduce $u_1-i_2 < u_2-i_2 \le v_2-j_2 \le n$.
Hence the first inequality of the last display is false, namely $u_1 <i_2$. 

Next we have to show that $v_2 > n+j_1$. Assume the contrary that $v_2\le n+j_1$. By the hypothesis, $(u_1,v_2)\notin \Delta_1$, hence the following chain is invalid
\begin{equation}
\label{eq_ineq_invalidchain_u1v2}
0\le u_1-i_1\le v_2-j_1\le n.
\end{equation}
The first inequality holds. By assumption, the last inequality holds as well. Therefore the middle inequality does not hold, namely $v_2 < u_1-i_1+j_1 \le v_1$, using  $(u_1,v_1)\in \Delta_1$. Hence as $u_1<i_2$,
\begin{equation}
\label{eq_ineq_v2small}
v_2 < \min\{v_1, i_2-i_1+j_1\}.
\end{equation}

By the hypothesis $(u_2,v_1)\notin \Delta_2$, so the following chain is invalid
\[
0\le u_2-i_2 \le v_1-j_2\le n.
\]
Thanks to \eqref{eq_ineq_v2small} and $(u_2,v_2)\in \Delta_2$, $0\le u_2-i_2 \le v_2-j_2 < v_1-j_2$. Hence the first two inequalities in the last display are valid, and the last one is not. In other words,
\begin{equation}
\label{eq_ineq_v1large}
v_1 > n+j_2.
\end{equation}
 
By the hypothesis, $(u_2,v_1)\notin \Delta_1$, so the following chain is invalid
\[
0\le u_2-i_1\le v_1 -j_1 \le n.
\]
The first and the last inequalities are true, since $u_2\ge i_2\ge i_1$ and $(u_1,v_1)\in \Delta_1$. Hence the middle one is false, namely $u_2 > v_1+i_1-j_1$. Together with \eqref{eq_ineq_v1large}, this yields
\begin{equation}
\label{eq_ineq_u2large}
u_2 > n+j_2+i_1-j_1.
\end{equation}

Now $(u_2,v_2)\in \Delta_2$, so $u_2\le v_2-j_2+i_2$. Combining with \eqref{eq_ineq_u2large} and \eqref{eq_ineq_v2small}, we get
\[
n+ j_2+i_1-j_1 < u_2 \le v_2-j_2+i_2 < 2i_2-j_2-i_1+j_1.
\]
This yields
\[
n < 2(j_1-i_1+i_2-j_2),
\]
contradicting the hypothesis $n\ge \max\{2(j_1-i_1+i_2-j_2),1\}$. Hence the assumption $v_2\le n+j_1$ is wrong, and $v_2>j_1+n$, finishing the proof of (2).

For (3): We get $i_1\le u_1 <i_2$ and $j_1+n <v_2 \le j_2+n$, consequently $j_1<j_2$. Hence $A_1$ and $A_2$ are distinct. This concludes the proof.
\end{proof}

The next key lemma of this section shows that if in the situation of \Cref{lem_twopoints_corners}, fewer points coming from $(u_1,v_1)$ and $(u_2,v_2)$ are allowed to be in $\Delta_{1,n} \cup \Delta_{2,n}$, then $(u_1,v_1)$ must be in the lower-left corner of $\Delta_{1,n}$, near $(i_1,j_1)$ and $(u_2,v_2)$ must be in the upper-right corner of $\Delta_{2,n}$, near $(i_2+n,j_2+n)$. Roughly speaking, if $(I_n)_{n\ge 0}$ in an $\Inc$-invariant chain of edge ideals where $I_n=I(G_n)$, $u_1v_1$ and $u_2v_2$ form an induced $2K_2$ subgraph of $G_n$, where $n$ is large and $u_1\le u_2, u_p<v_p$ for $p=1,2$, then $v_1$ has to be small, while $u_2$ has to be approximately $n$ (see \Cref{fig_twopoints_2K2}).

\begin{lem}
\label{lem_twopoints_induced2K2}
Assume that $A_1=(i_1,j_1), A_2=(i_2,j_2) \in \R_{\ge 1}^2$ are points such that $i_p< j_p, i_1\le i_2$. Let $n\ge \max\{2(j_1-i_1+i_2-j_2),j_1+j_2-2i_1, 1\}$ be an integer. Let $(u_p,v_p)\in \Delta_{p,n}$, $p=1,2$ be two points such that none of the points $(u_1,u_2),(v_1,v_2),(u_1,v_2),(u_2,v_1)$ belongs to $\bigcup_{p=1}^2\Delta_{p,n}$. Then the following inequalities hold:
\[
v_1 < i_2 < n+j_1 < u_2.
\]
In particular, the closed intervals $[u_1,v_1]$ and $[u_2,v_2]$ are disjoint.

Conversely, let $(u_p,v_p)\in \Delta_{p,n}$, $p=1,2$ be two points such that 
\[
v_1 < i_2 < n+j_1 < u_2.
\]
Then none of the points $(u_i,u_j), (u_i,v_j), (v_j,u_i), (v_i,v_j)$ where $1\le i,j\le 2$, $i\neq j$, belongs to the set $\bigcup_{p=1}^2\Delta_{p,n}$. 

\end{lem}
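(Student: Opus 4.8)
The proof splits into the two directions of the claimed equivalence. For the forward direction, the plan is to extract the chain $v_1 < i_2 < n+j_1 < u_2$ directly from the failure of the membership conditions, reusing the geometry already set up in \Cref{lem_twopoints_corners}. First I would observe that the hypothesis here is strictly stronger than that of \Cref{lem_twopoints_corners} (we forbid $(u_1,v_2),(u_2,v_1)$ from $\bigcup_p \Delta_{p,n}$, plus two more points), so parts (1)--(3) of that lemma apply verbatim: in particular $u_1 < i_2$ and $v_2 > j_1 + n$, and $A_1 \ne A_2$ with $i_1 < i_2$, $j_1 < j_2$. It then remains to upgrade $u_1 < i_2$ to $v_1 < i_2$ and $v_2 > j_1+n$ to $u_2 > j_1 + n$. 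For the first, I would use that $(v_1,v_2) \notin \Delta_{2,n}$: since $v_2 > j_1 + n \ge j_1 + (j_2 - i_1)$ — wait, more carefully, the new hypothesis $n \ge j_1 + j_2 - 2i_1$ is exactly what I expect to be needed here, and I would feed it together with the bounds from \Cref{lem_twopoints_corners} into the invalid chain $0 \le v_1 - i_2 \le v_2 - j_2 \le n$ to conclude that the leftmost inequality fails, i.e. $v_1 < i_2$. Symmetrically, from $(u_1,u_2) \notin \Delta_{1,n}$ and the invalid chain $0 \le u_1 - i_1 \le u_2 - j_1 \le n$, knowing $u_1 \ge i_1$ and (from $u_1 < i_2$, and $(u_1,v_1)\in\Delta_{1,n}$, $n$ large) that the last inequality $u_2 - j_1 \le n$ would have to be the one that fails, giving $u_2 > n + j_1$. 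The middle link $i_2 < n + j_1$ is then immediate: $i_2 \le j_2 \le j_1 + (j_2 - j_1)$ and $n \ge 2(j_1 - i_1 + i_2 - j_2) \ge 0$ forces $i_2 - j_1 \le j_2 - j_1 \le n$ after a short estimate, or more cleanly $i_2 < v_2 - n \le j_2 \le \dots$; I would just check $i_2 \le n + j_1$ holds because $n \ge 1$ and $i_2 \le j_2$, $j_1 \ge 1$ — the precise bookkeeping is routine. Finally, $v_1 < i_2 \le u_1$? no: $v_1 < i_2$ and $u_2 > n + j_1 \ge v_1$ (say) shows $[u_1,v_1] \subseteq [\,i_1, i_2) $ while $[u_2,v_2] \subseteq (n+j_1, \infty)$, so the two intervals are disjoint — this needs only $i_2 \le n + j_1$, already established.

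For the converse direction, the plan is a direct verification: assume $v_1 < i_2 < n + j_1 < u_2$ and check, for each of the eight points $(u_i,u_j),(u_i,v_j),(v_j,u_i),(v_i,v_j)$ with $i \ne j$, that it lies in neither $\Delta_{1,n}$ nor $\Delta_{2,n}$. A point $(a,b)$ lies in $\Delta_{p,n}$ iff $0 \le a - i_p \le b - j_p \le n$; so membership in particular forces $a \ge i_p$ and $b \le j_p + n$. For points whose first coordinate is $u_2$ or $v_2$: since $u_2 > n + j_1 \ge j_1 \ge i_1$ trivially, but to exclude $\Delta_{1,n}$ I would use the upper constraint — actually $u_2 > n + j_1 \ge n + i_1$ would need care, so more robustly I'd note $u_2 > n+j_1$ and $v_2 \ge u_2 > n + j_1 \ge n + j_p$? not quite; the cleanest uniform argument is: for $\Delta_{1,n}$ one needs second coordinate $\le n + j_1$, and whenever the second coordinate is $u_2$ or $v_2$ (both $> n+j_1$) this fails; whenever the first coordinate is $v_1$ (which is $< i_2 \le i_2$, and I'd want $v_1 < i_1$? no — $v_1$ could exceed $i_1$), I instead exclude $\Delta_{2,n}$ using first coordinate $< i_2$. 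So the bookkeeping is: split the eight points by which coordinates appear; a point with a ``$2$-index'' in the second slot is killed for $\Delta_{1,n}$ by the height bound, and for $\Delta_{2,n}$ it must have a ``$1$-index'' in the first slot, which (being $u_1$ or $v_1$, both $\le v_1 < i_2$) kills it; the remaining cases are handled symmetrically.

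The main obstacle I anticipate is purely the forward direction's step promoting $u_1 < i_2$ to $v_1 < i_2$ (and its mirror), because that is where the genuinely new hypothesis $n \ge j_1 + j_2 - 2i_1$ must be invoked, and one has to be careful about which inequality in the relevant invalid three-term chain is the one that fails — this requires first pinning down enough of the other coordinates' sizes (using \Cref{lem_twopoints_corners}(2) and the largeness of $n$) so that only the leftmost inequality can be the culprit. Everything else — the chain $i_2 < n + j_1$, the interval-disjointness, and the entire converse — is a matter of substituting into the defining inequalities of $\Delta_{p,n}$ and should go through mechanically once the case split is organized. I would present the forward direction first (leaning on \Cref{lem_twopoints_corners}), then the converse as a short self-contained check.
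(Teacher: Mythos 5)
The forward direction of your plan has a genuine gap at its central step, the promotion of $u_1<i_2$ to $v_1<i_2$. You propose to take the invalid chain $0\le v_1-i_2\le v_2-j_2\le n$ coming from $(v_1,v_2)\notin\Delta_{2,n}$ and arrange matters "so that only the leftmost inequality can be the culprit." That cannot be done with the information available at that point: the middle inequality amounts to $v_1\le v_2-(j_2-i_2)$, and before the lemma is proved the only upper bound on $v_1$ supplied by $(u_1,v_1)\in\Delta_{1,n}$ is $v_1\le j_1+n$, while $v_2$ may be barely larger than $j_1+n$; the new hypothesis $n\ge j_1+j_2-2i_1$ gives no bound of the form $v_1\le j_1+n-(j_2-i_2)$. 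Ruling out failure of the middle inequality is exactly the substance of the step, and it requires a multi-point contradiction, which is how the paper argues: assume $v_1\ge i_2$; then the middle inequality is the failing one, giving $v_1>u_2$ and $v_1>n+j_1-j_2+i_2$; next $(u_2,v_1)\notin\Delta_{1,n}$ (first and last inequalities of its chain hold) forces $u_2>v_1+i_1-j_1>n+i_1+i_2-j_2$; finally $(u_1,u_2)\notin\Delta_{1,n}$ — whose last inequality holds precisely because $u_2<v_1\le j_1+n$ in this scenario — forces $u_1>u_2+i_1-j_1>n+2i_1-j_1+i_2-j_2$, and combined with $u_1<i_2$ from \Cref{lem_twopoints_corners} this contradicts $n\ge j_1+j_2-2i_1$. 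This chained use of all three excluded points $(v_1,v_2)$, $(u_2,v_1)$, $(u_1,u_2)$ is the missing idea; your proposal reserves $(u_1,u_2)$ only for the subsequent step $u_2>n+j_1$, which indeed works exactly as you describe once $v_1<i_2$ is in hand (then $u_1-i_1\le v_1-j_1<u_2-j_1$, so only the last inequality can fail).

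Two smaller points. Your justification of $i_2<n+j_1$ ("$n\ge 1$, $i_2\le j_2$, $j_1\ge 1$") is not sufficient as stated; one needs the hypothesis on $n$, e.g. $n+j_1\ge (j_1+j_2-2i_1)+j_1=j_2+2(j_1-i_1)>i_2$, which is the paper's one-line estimate. The converse direction of your plan is correct and is essentially identical to the paper's accounting: second coordinate $u_2$ or $v_2$ excludes $\Delta_{1,n}$ since both exceed $n+j_1$, first coordinate $u_1$ or $v_1$ excludes $\Delta_{2,n}$ since both are below $i_2$, and the reversed points are excluded because every point of $\Delta_{p,n}$ has first coordinate strictly smaller than the second.
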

\begin{proof}
Let $\Delta_p=\Delta_{p,n}$. Since $(u_1,v_2), (u_2,v_1)$ do not belong to $\Delta_1\cup \Delta_2$,  by \Cref{lem_twopoints_corners}, we have $u_1<v_1, u_2<v_2$, and
\begin{align}
u_1  &<i_2, \label{eq_u1small}\\
v_2 &>j_1+n. \label{eq_v2large}
\end{align}

\begin{center}
 \begin{figure}[ht!]
  \includegraphics[width=68ex]{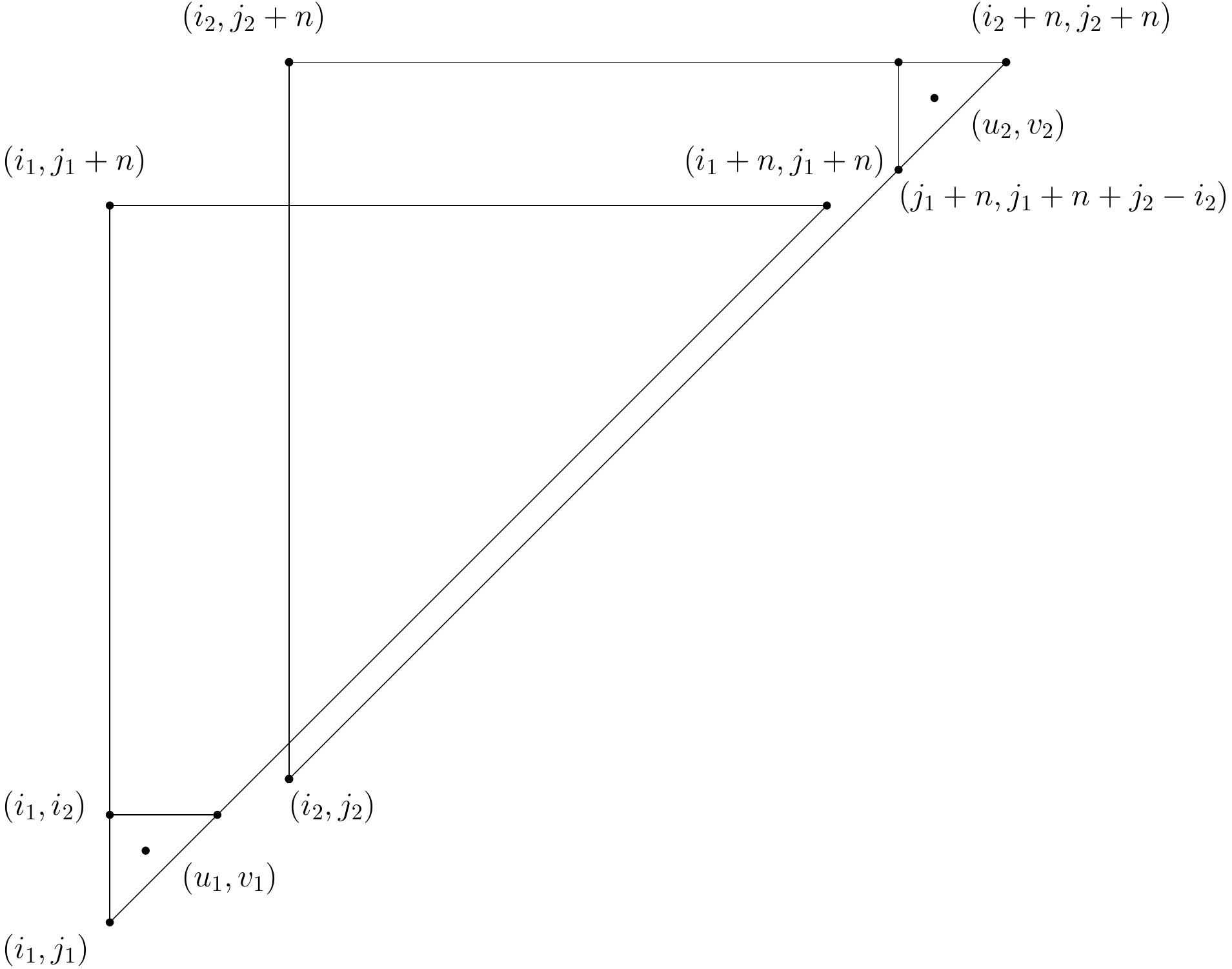} 
  \caption{Two points $(u_1,v_1), (u_2,v_2)$ as in \Cref{lem_twopoints_induced2K2}}
  \label{fig_twopoints_2K2}
 \end{figure} 
 \end{center}
 
Assume the contrary that $v_1\ge i_2$. By the assumption $(v_1,v_2)\notin \Delta_2$, so the chain $0\le v_1-i_2 \le v_2-j_2\le n$ is invalid.
Since the first and the last inequalities hold, the middle one is false. Consequently 
$$
v_1-i_2 > v_2-j_2 \ge u_2-i_2,
$$ 
using $(u_2,v_2)\in \Delta_2$. Together with \eqref{eq_v2large}, we deduce
\begin{equation}
\label{eq_ineq_v1large_lem34}
v_1 > \max\{u_2, n+j_1-j_2+i_2\}.
\end{equation}
By the hypothesis, $(u_2,v_1)\notin \Delta_1$, so the following chain is invalid
\[
0\le u_2-i_1\le v_1 -j_1 \le n.
\]
The first and the last inequalities are true, since $u_2\ge i_2\ge i_1$ and $(u_1,v_1)\in \Delta_1$. Hence the middle one is false, namely
\[
u_2 > v_1+i_1-j_1.
\]
Using \eqref{eq_ineq_v1large_lem34}, this yields
\begin{equation}
\label{eq_ineq_u2large_lem34}
u_2 > n+i_1-j_2+i_2.
\end{equation}

By the hypothesis, $(u_1,u_2)\notin \Delta_1$, so the following chain is false
\[
0\le u_1-i_1 \le u_2-j_1 \le n.
\]
Since $(u_1,v_1)\in \Delta_1$, the first inequality in the chain is valid. Per \eqref{eq_ineq_v1large_lem34}, $u_2< v_1$, so $u_2-j_1< v_1-j_1\le n$. Hence in the last chain only the middle inequality can be false, and consequently
\[
u_1 > u_2-j_1+i_1 > n+2i_1-j_1+i_2-j_2.
\]
The second inequality follows from \eqref{eq_ineq_u2large_lem34}. 

Per \eqref{eq_u1small}, $n+2i_1-j_1+i_2-j_2 < u_1<i_2$, so $n < j_2+j_1-2i_1$, contradicting the hypothesis on $n$. Therefore the assumption $v_1\ge i_2$ is wrong, and $v_1 < i_2$.

The last inequality implies $v_1 < i_2\le u_2$. Now $0\le u_1-i_1 \le v_1-j_1 < u_2-j_1$, but $(u_1,u_2)\notin \Delta_1$, so the chain $0\le u_1-i_1 \le u_2-j_1 \le n$
is invalid. Only the last inequality can be wrong, so $u_2 > n+j_1$, as desired.

Finally, for the remaining inequality $i_2 < n+j_1$, we use
\[
n+j_1 \ge (j_1+j_2-2i_1)+j_1 =j_2+2(j_1-i_1) > i_2,
\]
as $j_p>i_p$. Hence $v_1 < i_2 < n+j_1 < u_2$.

The second assertion holds since $u_1 <v_1 <u_2<v_2$.

The third assertion is a simple accounting. Indeed, as $i_p<j_p$, points in $\bigcup_{p=1}^2\Delta_{p,n}$ are of the form $(x,y)$ with $x<y$. Hence using the hypothesis 
\[
v_1 < i_2 < n+j_1 < u_2,
\]
we only need to consider the points $(u_1,u_2), (u_1,v_2), (v_1,u_2), (v_1,v_2)$. Each of these points does not lie in $\Delta_{2,n}$ since $u_1 < v_1 <i_2$. Also none of them lies in $\Delta_{1,n}$ since $n+j_1 < u_2 < v_2$. The proof is completed.
\end{proof}
\begin{cor}
\label{cor_no3K2_explicit}
Let $A_1=(i_1,j_1), A_2=(i_2,j_2), A_3=(i_3,j_3)\in \R_{\ge 1}^2$ be three given points such that $i_p< j_p$. Assume that $i_1\le i_2\le i_3$.

Denote 
\begin{gather*}
 N=\max \{2(j_1-i_1+i_2-j_2),2(j_1-i_1+i_3-j_3),2(j_2-i_2+i_3-j_3),\\
      \qquad j_1+j_2-2i_1,j_1+j_3-2i_1,j_2+j_3-2i_2,1)\}. 
\end{gather*}
Then $N \le M=2\max\{j_1,j_2,j_3\}$. Moreover, for all $n\ge M$ and for every three points $(u_p,v_p)\in \Delta_{p,n}$, $p=1,2,3$, one of the twelve points $(u_i,u_j), (v_i,v_j)$, $(u_s,v_t)$ \textup{(}where $1\le i<j\le 3, 1\le s, t\le 3, s\neq t$\textup{)}, belongs to $\bigcup_{p=1}^3\Delta_{p,n}$.
\end{cor}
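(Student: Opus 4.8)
The plan is to deduce Corollary~\ref{cor_no3K2_explicit} entirely from the two key lemmas \ref{lem_twopoints_corners} and \ref{lem_twopoints_induced2K2}, by a pigeonhole-style argument on pairs of indices. First I would dispose of the inequality $N\le M$: each of the finitely many terms appearing in $N$ is of the form $2(j_a-i_a+i_b-j_b)$ or $2(j_a+j_b-2i_a)$ (or $1$). Using the standing hypotheses $1\le i_p<j_p$, each such term is bounded above by $2\max\{j_1,j_2,j_3\}$. For the type $2(j_a-i_a+i_b-j_b)$ one writes it as $2(j_a-i_a)-2(j_b-i_b)\le 2(j_a-i_a)\le 2j_a-2\le 2j_a$; for the type $2(j_a+j_b-2i_a)=2(j_b-i_a)+2(j_a-i_a)$, since $i_a\ge 1$ both $j_b-i_a\le j_b-1<j_b$ and $j_a-i_a<j_a$, hence the sum is $<2j_a+2j_b$... this last bound is too crude, so instead I would simply observe $j_a+j_b-2i_a\le j_a+j_b-2\le 2\max\{j_a,j_b\}$ using $i_a\ge 1$, giving the term $\le 2\max\{j_a,j_b\}\le M$. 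Since also $1\le M$ (as $j_p\ge 2$), we get $N\le M$.

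For the main assertion, suppose toward a contradiction that $n\ge M\ge N$ and $(u_p,v_p)\in\Delta_{p,n}$ for $p=1,2,3$ are three points such that none of the twelve listed points lies in $\bigcup_{p=1}^3\Delta_{p,n}$. The key point is that the hypotheses of Lemma~\ref{lem_twopoints_induced2K2} are \emph{pairwise}: for any pair $a<b$ from $\{1,2,3\}$, the four points $(u_a,u_b),(v_a,v_b),(u_a,v_b),(u_b,v_a)$ are among the twelve excluded points, and $n\ge N$ dominates the threshold $\max\{2(j_a-i_a+i_b-j_b),\,j_a+j_b-2i_a,\,1\}$ required by that lemma (here I use $i_a\le i_b$, which holds since $i_1\le i_2\le i_3$). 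So Lemma~\ref{lem_twopoints_induced2K2} applies to each pair, yielding
\[
v_a < i_b < n+j_a < u_b \quad\text{for every pair } a<b.
\]
Applying this to the pairs $(1,2)$ and $(2,3)$ gives $v_1<i_2$ and $n+j_2<u_3$; but applying it to the pair $(1,3)$ also gives $v_1<i_3$ and $n+j_1<u_3$, and more usefully it gives a chain $v_2<i_3$ from the pair $(2,3)$ together with $n+j_1<u_2$ from the pair $(1,2)$. The contradiction I expect to extract is from transitivity: from the pair $(1,2)$ we get $i_2<n+j_1<u_2$, so $u_2>n+j_1$; from the pair $(2,3)$ we get $v_2<i_3$, so $v_2<i_3\le u_3$; but $(u_2,v_2)\in\Delta_{2,n}$ forces $u_2\le v_2$, whence $n+j_1<u_2\le v_2<i_3$, i.e. $n<i_3-j_1<j_3$ (as $i_3<j_3$), while $n\ge M\ge 2\max\{j_1,j_2,j_3\}\ge j_3$ — a contradiction.

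I expect the main obstacle to be purely bookkeeping: making sure that for \emph{each} of the three pairs the specific numerical threshold in Lemma~\ref{lem_twopoints_induced2K2} is genuinely one of the terms inside $N$ (so that $n\ge N$ suffices), and that the three resulting chains $v_a<i_b<n+j_a<u_b$ are combined in a way that produces a contradiction with $n\ge M$. Care is needed because Lemma~\ref{lem_twopoints_induced2K2} requires $i_a\le i_b$, which is why the hypothesis $i_1\le i_2\le i_3$ is essential and why the twelve points must be indexed by \emph{ordered} pairs $(u_s,v_t)$ rather than unordered ones. Once the applicability is checked, the contradiction is immediate from the single inequality $u_2\le v_2$ inside $\Delta_{2,n}$ together with $u_2>n+j_1$ and $v_2<i_3\le n$, so the proof is short modulo this verification.
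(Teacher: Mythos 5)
Your proposal is correct and follows essentially the same route as the paper: both reduce to Lemma~\ref{lem_twopoints_induced2K2} applied pairwise (the paper uses only the pairs $(1,2)$ and $(2,3)$, while you invoke all three, though the extra one is unused), and both reach the same contradiction $n + j_1 < u_2 \le v_2 < i_3$, which forces $n < \max\{j_1,j_2,j_3\}$, against $n \ge M$. The only difference is cosmetic: your verification of $N \le M$ is spelled out term by term where the paper simply asserts it from $1\le i_p < j_p$.
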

\begin{proof}
Since $1\le i_p<j_p$, we get $N \le 2\max\{j_1,j_2,j_3\}$. Assume the contrary, that for some $n\ge 2\max\{j_1,j_2,j_3\}$, and some three points $(u_p,v_p)\in \Delta_{p,n}$, $p=1,2,3$, none of the twelve points $(u_i,u_j), (v_i,v_j)$, $(u_s,v_t)$ (where $1\le i<j\le 3, 1\le s, t\le 3, s\neq t$), belongs to $\bigcup_{p=1}^3\Delta_{p,n}$.

Using Lemma \ref{lem_twopoints_induced2K2} for  $(A_1, A_2, (u_1,v_1), (u_2,v_2))$ and $(A_2,A_3, (u_2,v_2), (u_3,v_3))$, we get the following inequalities 
\begin{align*}
& u_2 > n+j_1 \\
& u_2\le v_2 <i_3.
\end{align*}
But then $n < i_3 < \max\{j_1,j_2,j_3\}$. This contradiction finishes the proof.
\end{proof}

Now we are ready for the
\begin{proof}[Proof of \Cref{thm_no3K2}]
Assume the contrary, for some $n\ge 3r$, $G_n$ contains an induced $3K_2$ subgraph, say $\{u_1v_1,u_2v_2,u_3v_3\}$, where $u_p<v_p$. Since $r=\ind \Icc$, there exist $1\le i_p<j_p\le r, p=1,2,3$ such that $x_{i_p}x_{j_p}\in I_r$ and $x_{u_p}x_{v_p}$ belongs to the $\Inc$-orbit of $x_{i_p}x_{j_p}$. Again denote $\Delta_{p,n-r}=\Delta((i_p,j_p),n-r)$. By \Cref{lem_integralpoints}, that $x_{u_p}x_{v_p} \in \Inc_{r,n}(x_{i_p}x_{j_p})$ yields $(u_p,v_p)\in \Delta_{p,n-r}$. We may assume $i_1\le i_2\le i_3$. 

Now as $n\ge 3r$, $n-r \ge 2r\ge  2\max\{j_1,j_2,j_3\}$, hence by \Cref{cor_no3K2_explicit}, the $3K_2$ graph $\{u_1v_1,u_2v_2,u_3v_3\}$ is not an induced subgraph of $G_n$. This contradiction shows that $G_n$ does not contain any induced $3K_2$ subgraph for all $n\ge 3r$.  

 Take $n\ge 2r$. We prove that $\indmatch(G_{n+r})=\indmatch(G_{n+r+1})$.

\textsf{Step 1}: First assuming $\indmatch(G_{n+r})=2$ and $n\ge 2r$, we show the equality $\indmatch(G_{n+r+1})=2$. Since $\indmatch(G_{n+r})=2$ there exists an induced $2K_2$ in $G_{n+r}$, say $u_1v_1, u_2v_2$ where $u_p < v_p$. We may assume that $u_1 < u_2$. Assume that $\{i_1,j_1\}, \{i_2,j_2\} \in G_r$ where $1\le i_p < j_p\le r$ and $(u_p,v_p)\in \Delta((i_p,j_p),n)$. Since $n\ge 2r$, by \Cref{lem_twopoints_induced2K2}, we have
\begin{align}
\label{ineq_vertices}
v_1 < i_2 < n+j_1 < u_2.
\end{align}
Assume that $E(G_r)=\{\{i_1,j_1\},\ldots,\{i_s,j_s\}\}$ where $1\le i_p < j_p\le r$ for $p=1,\ldots,s$.

Now $u_1v_1, u_2v_2$ is an induced $2K_2$ in $G_{n+r}$, hence we have 
\begin{equation}
\label{eq_noedges}
\text{$(u_1,u_2), (u_1,v_2),(v_1,u_2), (v_1,v_2) \notin \Delta((i_p,j_p),n)$ for any $1\le p\le s$.}
\end{equation}
\textbf{Claim 1}: The conditions \eqref{eq_noedges} imply that $u_1v_1, (u_2+1)(v_2+1)$ form a $2K_2$ in $G_{n+r+1}$.

Assume the contrary, then from \Cref{lem_integralpoints}, for some $1\le p\le s$, one of the points $(u_1,u_2+1), (u_1,v_2+1),(v_1,u_2+1), (v_1,v_2+1)$ belongs to $\Delta((i_p,j_p),n+1)$. We treat the case $(u_1,u_2+1) \in \Delta((i_p,j_p),n+1)$, the remaining cases are similar. The following chain is valid
\[
0\le u_1-i_p\le u_2+1-j_p\le n+1.
\]
In particular, $i_p\le u_1, u_2\le n+j_p$. Per \eqref{ineq_vertices} and $n\ge 2r$, we also have 
\[
u_2-u_1 \ge u_2-v_1 > n+j_1-i_2 \ge 2r+j_1-i_2 \ge r > j_p-i_p.
\]
Combining these inequalities yields $(u_1,u_2) \in \Delta((i_p,j_p),n)$, contradicting \eqref{eq_noedges}. Hence $u_1v_1, (u_2+1)(v_2+1)$ form a $2K_2$ in $G_{n+r+1}$, whence the desired conclusion $\indmatch(G_{n+r+1})=2$.

\textsf{Step 2:} Next we show that if $\indmatch(G_{n+r})=2$ and $n\ge 2r+1$, then also $\indmatch(G_{n+r-1})=2$.

Choose $u_1v_1, u_2v_2, i_1,j_1,i_2,j_2$ as in Step 1. We claim that $u_1v_1,(u_2-1)(v_2-1)$ form a $2K_2$ in $G_{n+r-1}$.

Since $(u_1,v_1)\in \Delta((i_1,j_1),n)$, we get $(u_1,v_1)\in \Delta((i_1,j_1),v_1-j_1)$. But by \eqref{ineq_vertices}, $v_1-j_1 < i_2-j_1 < r-1$, so as $n\ge 2r+1$, $(u_1,v_1)\in \Delta((i_1,j_1),n-1)$, namely $u_1v_1\in E(G_{n+r-1})$. As per \eqref{ineq_vertices}, 
$$
u_2-1 > n+j_1-1 \ge 2r+j_1 > i_2,
$$
we deduce $(u_2-1,v_2-1)\in \Delta((i_2,j_2),n-1)$, so that $(u_2-1)(v_2-1)\in E(G_{n+r-1})$. Since $n\ge 2r+1$ and $i_2\le r$, together with \eqref{ineq_vertices},
\[
v_1 < i_2 < n-1+j_1 <u_2-1.
\]
Assume now that $u_1v_1$ and $(u_2-1)(v_2-1)$ do not form a $2K_2$ in $G_{n+r-1}$. Then as $u_1 < v_1 < u_2-1< v_2-1$, for some $1\le p\le s$, $\Delta((i_p,j_p),n-1)$ contains one of the points $(u_1,u_2-1), (u_1,v_2-1), (v_1,u_2-1), (v_1,v_2-1)$. We treat the case $(u_1,u_2-1)\in \Delta((i_p,j_p),n-1)$, the remaining cases are similar.

We have a chain
\[
0\le u_1-i_p \le u_2-1-j_p\le n-1,
\]
which clearly implies
\[
0\le u_1-i_p \le u_2-j_p\le n,
\]
namely $(u_1,u_2)\in \Delta((i_p,j_p),n)$. This contradicts \eqref{eq_noedges}.

Hence $u_1v_1,(u_2-1)(v_2-1)$ form a $2K_2$ in $G_{n+r-1}$ and $\indmatch(G_{n+r-1})=2$, as desired.

From Step 1 and Step 2, for any $n\ge 2r$, the equality $\indmatch(G_{n+r})=2$ holds if and only if $\indmatch(G_{n+r+1})=2$. Since $I_n$ is a nonzero edge ideal, $1\le \indmatch(G_{n+r}) \le 2$, for all $n\ge 2r$, and we deduce that  $\indmatch(G_{n+r})=\indmatch(G_{n+r+1})$ for all such $n$. The proof is concluded.
\end{proof}

\section{Induced anticycles}
\label{sect_inducedanticyc}

Our main goal in this section is to show that $G_n$ does not contain short induced anticycles of length at least $5$ for all sufficiently large $n$.
\begin{prop}
\label{prop_noCmc}
Let $\Icc=(I_n)_{n\ge 1}$ be an $\Inc$-invariant chain of eventually nonzero edge ideals with the stability index $\ind \Icc=r\ge 1$. Let $G_n$ be the graph corresponding to $I_n$. Let $m\ge 5$ be an integer. Then for all $n\ge mr$, the complement graph $G_n^c$ of $G_n$ has no induced cycle $C_m$.

In particular, for all $n\ge 5r$, the complement graph $G_n^c$ of $G_n$ has no induced cycle $C_m$ for any integer $m$ such that $5\le m \le \dfrac{n}{r}$.
\end{prop}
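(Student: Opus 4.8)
The plan is to convert the statement, via \Cref{lem_integralpoints}, into a question about how the integral points of a union of right isosceles triangles can meet the vertex set of an anticycle, and then to exploit the fact that when $n$ is large these triangles are enormous, so that away from a bounded ``boundary'' the graph $G_n$ depends only on the differences of vertex labels.

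First I would fix notation: an induced $C_m$ in $G_n^c$ is the same as an induced anticycle $C_m^c$ in $G_n$, say with consecutive vertices $a_1,\dots,a_m$, so that the cyclic pairs $\{a_i,a_{i+1}\}$ (indices mod $m$) are exactly the non-edges of $G_n$ among the $a_i$'s and every other pair is an edge. Write $E(G_r)=\{\{i_p,j_p\}\}_{p=1}^{s}$ with $1\le i_p<j_p\le r$, put $N=n-r$, and by \Cref{lem_integralpoints} identify $E(G_n)$ with the integral points of $\bigcup_{p}\Delta_p$, where $\Delta_p=\Delta((i_p,j_p),N)=\{(x,y):i_p\le x\le y-(j_p-i_p),\ j_p\le y\le j_p+N\}$. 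I would record the obvious ``sliding'' principle (if $(u,v)\in\Delta_p$ then $(u',v)\in\Delta_p$ for all $i_p\le u'\le v-(j_p-i_p)$, and $(u,v')\in\Delta_p$ for all $u+(j_p-i_p)\le v'\le v$), and note that vertices with label $<\min_p i_p$ or $>\max_p j_p+N$ are isolated in $G_n$; since a vertex of an induced $C_m$ has degree $m-3>0$, all the $a_i$ lie in $[\min_p i_p,\ \max_p j_p+N]$. It suffices to treat $n\ge mr$: once $G_n^c$ has no induced $C_m$ for $n\ge mr$, the ``in particular'' clause follows by taking $n\ge mr$ for each $m$ with $5\le m\le n/r$.

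The heart of the argument is a chordality reduction. If all the $a_i$ happen to lie in $[r,\ \max_p j_p+N]$, then for $a<b$ in this range the constraint $i_p\le a$ is automatic, so $\{a,b\}\in E(G_n)$ if and only if $b-a\ge g(b):=\min\{\,j_p-i_p:j_p\ge b-N\,\}$; as $g$ is nondecreasing, $G_n^c$ restricted to this range is the interval graph of the intervals $[\,b-g(b)+1,\ b\,]$, hence chordal, and so contains no induced $C_m$ with $m\ge4$ --- a contradiction. Therefore some $a_i\le r-1$. By the mirror-image argument --- using $f(a):=\min\{\,j_p-i_p:i_p\le a\,\}$ (nonincreasing) on $[\min_p i_p,\ n-r+1]$ and the intervals $[\,a,\ a+f(a)-1\,]$ --- some $a_i\ge n-r+2$. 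In particular $\max_i a_i-\min_i a_i\ge n-2r+3$, which is large.

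It remains to rule out this ``a low vertex and a high vertex coexist'' configuration; this is the main obstacle and requires a combinatorial analysis of the cyclic structure. The first tool is a companion of the isolation remark: for a non-edge $\{a,b\}$ with $a<b$, testing $(a,b)$ against the source of largest $j_p$ (using $b\le\max_p j_p+N$) and against the source of smallest $i_p$ (using $a\ge\min_p i_p$) shows that either $b-a\le r-2$ --- call such a cyclic edge \emph{short} --- or else $a\le r-2$ and $b\ge n-r+3$, a \emph{big} edge joining a ``low'' vertex to a ``high'' one. If every cyclic edge were short, telescoping around the cycle would give $\max_i a_i-\min_i a_i\le\lfloor m/2\rfloor(r-2)<n-2r+3$, contradicting the previous step; so big edges exist. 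Deleting the big edges cuts $C_m$ into paths whose consecutive labels change by at most $r-2$; since $n\ge mr$, such a path cannot rise enough to contain both a low and a high vertex (a short path is too short, a quick separate check disposing of the degenerate possibility of a single big edge), so each path lies entirely on the ``low side'' or the ``high side'', its endpoints --- being the ends of big edges --- are accordingly all low, respectively all high, and the big edges must therefore alternate low-side and high-side paths around the cycle. Playing this alternation off against the remaining induced conditions of the anticycle (every non-cyclic pair, in particular a low--high pair at cyclic distance $\ge2$, must be an edge of $G_n$), together with the bounds $\#\{\text{low}\},\#\{\text{high}\}\le r-2$ and $m\ge5$, produces the contradiction. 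This final reconciliation of the cyclic order with the rigid staircase shape of $\bigcup_p\Delta_p$ near its two edges is where the genuine work lies; everything before it is bookkeeping.
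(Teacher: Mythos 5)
Your proposal takes a genuinely different route from the paper, and the first half of it is both correct and elegant. The observation that for $a<b$ in the range $[r,\ \max_p j_p+N]$ the edge condition reduces to $b-a\ge g(b)$ with $g$ nondecreasing, so that $G_n^c$ restricted there is the interval graph of $\{[b-g(b)+1,b]\}$ and hence chordal, is a nice structural fact that the paper does not use (the mirror argument on the other side is equally valid). The dichotomy ``short edge'' versus ``big edge'' for non-edges is also correct, and your telescoping argument correctly rules out the all-short case. This is all sound, and it localizes the putative $C_m$ to a low--high mixed configuration.

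However, there is a genuine gap at precisely the place you flag as ``where the genuine work lies'': you have not actually derived the contradiction from the alternating low/high arc structure. This is not merely a matter of filling in routine details. With the coarse bound $r-2$ per short edge, even the single-big-edge case does not close: a path of $m-1$ short edges can rise by $(m-1)(r-2)$, and $(m-1)(r-2)\ge n-2r+5$ is consistent with $n\ge mr$ whenever $r\ge 2m+3$, so the ``quick separate check'' is not quick. The paper avoids this difficulty by tying each consecutive gap to a \emph{specific} generating pair: in the paper's proof, because $\{a_1,a_{p+1}\}$ is an edge of $G_n$ sourced from some $(i_{1(p+1)},j_{1(p+1)})\in E(G_r)$ and $\{a_p,a_{p+1}\}$ is a non-edge, \Cref{lem_twopoints_corners}-type reasoning forces $a_{p+1}-a_p<j_{1(p+1)}-i_{1(p+1)}$, a bound that depends on the edge being used rather than the worst case $r-2$; summing these particular bounds (and combining with \Cref{lem_inducedanticyc}, which pins $a_m$ as the maximum) yields $a_m-a_1<(m-2)r$, which then contradicts $n\ge (m-1)r$. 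To complete your approach you would need to import this idea of sourcing each gap from the edges at cyclic distance $\ge 2$, at which point the chordality reduction, while pretty, is doing less of the work than it first appears. As written, the proposal is an incomplete proof with a different and interesting framework, not an alternate complete argument.
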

The main ingredients to the proof are Lemma \ref{lem_twopoints_corners}  and the following lemma. 
\begin{lem}
\label{lem_inducedanticyc}
With usual notations, let $n\ge 2r$ be an integer. Let $H$ be an induced subgraph of $G_{n+r}$ such that $H$ contains an induced anticycle $C_m^c$ \textup{(}where $m\ge 5$\textup{)} whose consecutive vertices are labeled as $a_1,\ldots,a_m \in \{1,\ldots,n+r\}$ such that $a_1=\min\{a_1,\ldots,a_m\}$. Then $\max\{a_1,a_2,\ldots,a_m\}=\max\{a_2,a_m\}$.
\end{lem}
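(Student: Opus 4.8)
\textbf{Proof plan for Lemma \ref{lem_inducedanticyc}.}

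The plan is to translate the anticycle structure into statements about integral points in the triangles $\Delta_{p,n}$ and then exploit \Cref{lem_twopoints_corners}. Since the $a_i$ are vertices of $G_{n+r}$, for each edge $\{a_s,a_t\}$ of the anticycle (equivalently, a non-edge of $C_m$) we have $x_{\min(a_s,a_t)}x_{\max(a_s,a_t)}\in I_{n+r}$, so by \Cref{lem_integralpoints} the point $(a_s,a_t)^{\le}$ lies in some triangle $\Delta((i_{p},j_{p}),n)$ with $\{i_p,j_p\}\in E(G_r)$. First I would argue by contradiction: suppose $c:=\max\{a_1,\dots,a_m\}$ is attained at some vertex $a_k$ with $3\le k\le m-1$; so neither $a_2$ nor $a_m$ is the largest. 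In the anticycle $C_m^c$ with $m\ge 5$, the vertex $a_k$ is adjacent to every $a_\ell$ except $a_{k-1}$ and $a_{k+1}$; in particular $a_k$ is adjacent to both $a_1$ and at least one of $a_2,a_m$ (here using $m\ge 5$ so that $\{k-1,k+1\}$ misses at least one of the two indices $2,m$, while $1\notin\{k-1,k+1\}$ since $3\le k\le m-1$). Also $a_1$ and (say) $a_2$: here one must be careful, because $a_1a_2$ need not be an edge of the anticycle when $m$ is small, but $a_1$ is adjacent in $C_m^c$ to every $a_\ell$ with $\ell\notin\{2,m\}$, so $a_1a_3,\dots,a_1a_{m-1}$ are all edges.

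The key step is then the following. Pick $a_k$ realizing the maximum with $3\le k\le m-1$, and pick a neighbour $b\in\{a_2,a_m\}$ of $a_k$ in the anticycle with $b$ not equal to the maximum. We get two edges of $H\subseteq G_{n+r}$: $\{a_1,a_k\}$ and $\{a_1,b\}$ (both edges since $a_1$ is adjacent to everything outside $\{a_2,a_m\}$ — if $b=a_2$ we instead use that $a_1a_2$ may fail to be an edge, so I would choose $b$ among $\{a_2,a_m\}$ more carefully: take $b=a_m$ if $k\ne m-1$, i.e. if $a_m$ is a neighbour of $a_k$, noting $a_1a_m$ is a non-edge; symmetrically take $b=a_2$ if $k\ne 3$). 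In either subcase one of the "crossing" points among $(a_1,a_k)^\le,(b,a_k)^\le,(a_1,b)^\le$ will be forced by \Cref{lem_integralpoints} to lie outside all the triangles (because $a_1a_2$ or $a_1a_m$ is a non-edge of the anticycle), while the two genuine edges give two points lying in triangles $\Delta_{1,n},\Delta_{2,n}$; applying \Cref{lem_twopoints_corners} with $A_1,A_2$ the two base points and $(u_1,v_1),(u_2,v_2)$ these two edge-points then forces $a_1$ (the common small coordinate) to be large relative to the triangle attached to the other edge, contradicting $a_1=\min\{a_1,\dots,a_m\}$; alternatively it forces the would-be-maximal coordinate $a_k$ to be strictly smaller than something bounded by another $a_\ell$, contradicting maximality.

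The main obstacle I anticipate is the bookkeeping for small $m$ (notably $m=5$), where the two "forbidden" neighbours $a_{k-1},a_{k+1}$ of $a_k$ in the anticycle may coincide with $\{a_2,a_m\}$ in inconvenient ways, so that the edges of $H$ I want to feed into \Cref{lem_twopoints_corners} must be selected by a short case distinction on the value of $k$ modulo the cycle; getting the hypotheses of \Cref{lem_twopoints_corners} to apply — in particular that exactly two of the four rectangle-vertices lie in $\bigcup\Delta_{p,n}$ — requires identifying one crossing point coming from a genuine non-edge of the anticycle (such as $a_1a_{k\pm1}$ or $a_2a_m$) and checking it is not accidentally in some other triangle $\Delta_{p,n}$ with $p\notin\{1,2\}$. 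I expect this to be handled by the same mechanism as in \Cref{lem_twopoints_corners}, namely that a point lying in $\Delta_{p,n}$ would produce the non-edge as an edge after suitable shifting, which is impossible; so the argument reduces to invoking \Cref{lem_integralpoints} once more. Finally, I would record that $n\ge 2r$ guarantees $n\ge\max\{2(j_1-i_1+i_2-j_2),1\}$ (since all $i_p,j_p\le r$), so \Cref{lem_twopoints_corners} is indeed applicable throughout.
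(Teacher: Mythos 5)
Your overall strategy (translate anticycle edges into triangle points, then exploit \Cref{lem_twopoints_corners} together with the fact that cycle-consecutive pairs cannot lie in any triangle) is the right general direction, but the central step as you describe it cannot work. You propose to feed into \Cref{lem_twopoints_corners} the two ``genuine edges'' available among the triple $\{a_1,b,a_k\}$, namely $\{a_1,a_k\}$ and $\{b,a_k\}$. These two edges share the vertex $a_k$, so the configuration degenerates: with $(u_1,v_1)=(a_1,a_k)$ and $(u_2,v_2)=(b,a_k)$ the ``crossing'' points are $(u_1,v_2)=(a_1,a_k)$ and $(u_2,v_1)=(b,a_k)$, i.e.\ they coincide with the two edge-points themselves, which \emph{do} lie in the triangles. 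Hence the hypothesis of \Cref{lem_twopoints_corners} that both crossing points avoid $\Delta_{1,n}\cup\Delta_{2,n}$ is violated for trivial reasons, and the non-edge $(a_1,b)$ you want to use is not a vertex of this (degenerate) rectangle at all. No choice of $b\in\{a_2,a_m\}$ repairs this, because any two edges through the would-be maximum $a_k$ have the same defect; to apply \Cref{lem_twopoints_corners} you need two vertex-disjoint edges of the anticycle whose \emph{both} crossing pairs are non-edges, and a single application at the maximal vertex does not produce such a pair in general (e.g.\ for edges $\{a_1,a_k\}$ and $\{a_2,a_m\}$ the crossing pair $\{a_2,a_k\}$ is again an anticycle edge unless $k=3$).

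This is exactly where the paper's argument differs: after reducing (by relabeling) to the case $a_2\le a_m$, it applies \Cref{lem_twopoints_corners} to the vertex-disjoint edges $\{a_1,a_3\}$ and $\{a_2,a_m\}$, whose crossing pairs $\{a_1,a_m\}$ and $\{a_2,a_3\}$ are both cycle-consecutive and hence (by the Claim-1 mechanism you correctly anticipate) lie in no triangle; a first application with the roles swapped rules out $i_{2m}\le i_{13}$ (it would force $a_2<i_{13}\le a_1$, contradicting minimality of $a_1$), and the second application gives $a_3\le j_{13}+n<a_m$. One then iterates along the cycle with the pairs $\{a_1,a_{i+1}\}$ and $\{a_i,a_m\}$ for $3\le i\le m-2$, whose crossing pairs $\{a_1,a_m\}$ and $\{a_i,a_{i+1}\}$ are again consecutive, to conclude $a_p<a_m$ for all $p\le m-1$. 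This inductive chaining, which your single-shot contradiction at the maximum omits, is the missing idea; without it the hypotheses of \Cref{lem_twopoints_corners} cannot be met.
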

\begin{proof}
Again, denote $E(G_r)=\{\{i_1,j_1\},\ldots,\{i_s,j_s\}\}$ where $1\le i_p < j_p\le r$ for $p=1,\ldots,s$. Denote $\Delta_p:=\Delta_{p,n}:=\Delta((i_p,j_p),n)$ for $1\le p\le s$.

It suffices to show that if $a_2\le a_m$, then $a_m=\max\{a_1,a_2,\ldots,a_m\}$. Indeed, for the case $a_m < a_2$, reindex the vertices as follows: $b_1=a_1, b_i=a_{m+2-i}$ for $2\le i\le m$. Then $b_1=\min \{b_1,b_2,\ldots,b_m\}$ and $b_2= a_m < a_2=b_m$. Thus we may assume that $a_2\le a_m$, and we show that $a_m=\max\{a_1,a_2,\ldots,a_m\}$.

By the definition of the stability index, for each pair of integers $(p,q)$ such that $1 \le p, q\le m, q-p\ge 2$ and $(p,q)\neq (1,m)$, there exists $(i_{pq},j_{pq})\in G_r$ where $1\le i_{pq} <j_{pq}\le r$ such that $(a_p,a_q)^\le \in \Delta((i_{pq},j_{pq}),n)$.

For simplicity, denote $\Delta_{pq}=\Delta((i_{pq},j_{pq}),n)$. We let $a_{m+1}=a_1$, $A_{pq}=(i_{pq},j_{pq})$ for $1\le p<q\le m, q\ge p+2, (p,q)\neq (1,m)$.

\textbf{Claim 1:}  For all $1\le p\le m$, none of the points $(a_p,a_{p+1}), (a_{p+1},a_p)$ belongs to 
$$
\Gamma=\mathop{\bigcup_{1\le p<q\le r}}_{q\ge p+2, (p,q)\neq (1,m)} \Delta_{pq}.
$$

\emph{Proof of Claim 1:} First assume that for some $1\le t\le m$, we have 
\[
(a_p,a_{p+1})\in\Gamma \subseteq \bigcup_{t=1}^s \Delta_t.
\]
Therefore, there exists an integer $t\in\{1,\ldots,s\}$ such that $(a_p,a_{p+1})\in \Delta_t.$ Since $i_t<j_t$, $a_p<a_{p+1}$. 
Now
\[
\{a_p,a_{p+1}\} \in E(G_{n+r}),
\]
and $H$ is an induced subgraph of $G_{n+r}$, thus $\{a_p,a_{p+1}\}\in E(H)$, a contradiction. Hence $(a_p,a_{p+1})\notin \Gamma$. Similarly for $(a_{p+1},a_p)$, we finish the proof of Claim 1.

Now consider the two points $(a_1,a_3)=(a_1,a_3)^{\le} \in \Delta_{13}, (a_2,a_m)=(a_2,a_m)^{\le} \in \Delta_{2m}$. 

Observe that $i_{13}< i_{2m}$. Indeed, assume the contrary $i_{2m} \le i_{13}$. By Claim 1, $(a_2,a_3)$, $(a_1,a_m)\notin \Delta_{2m} \cup \Delta_{13} $. Note that $n\ge 2r \ge 2\max\{j_{2m}, j_{13}\}$. Hence using \Cref{lem_twopoints_corners} with $(A_{2m}, A_{13}, (a_2,a_m), (a_1,a_3))$ in place of $(A_1,A_2, (u_1,v_1), (u_2,v_2))$, we get 
\begin{align*}
a_2 < i_{13}\le  a_1,
\end{align*}
contradicting the minimality of $a_1$.

\begin{figure}[ht!]
 \includegraphics[width=35ex]{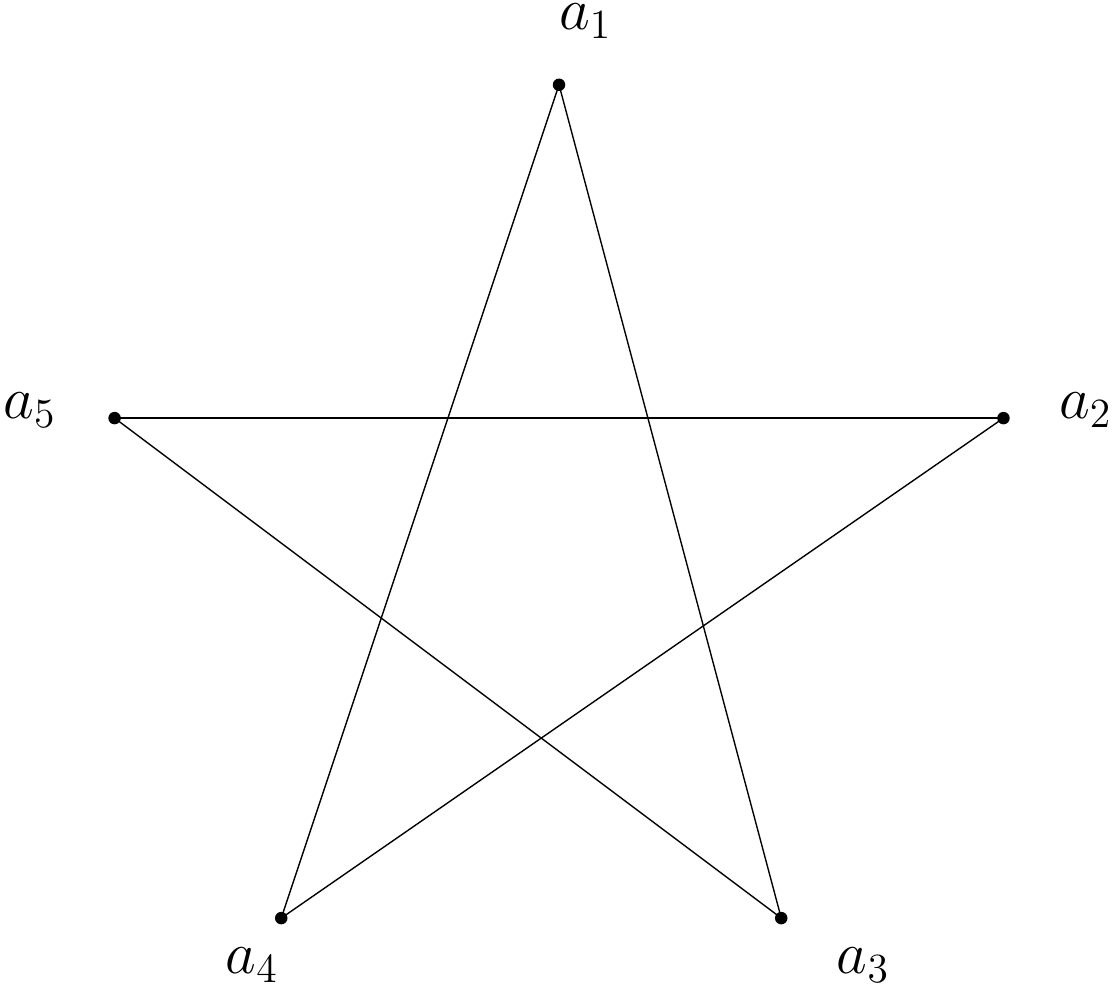}
 \caption{An induced anticycle $C_5^c$}
	\label{Anticycle $C_5$} 
\end{figure}

This implies that $i_{13}<i_{2m}$. The last inequality puts us in a position to apply \Cref{lem_twopoints_corners} for the data $(A_{13}, A_{2m}, (a_1,a_3), (a_2,a_m))$, as by Claim 1, it holds that $(a_1,a_m), (a_2,a_3) \notin \Delta_{13} \cup \Delta_{2m}$. The results are the following inequalities
\[
a_3  \le j_{13}+n < a_m.
\]
In particular, $a_3<a_m$. Continuing this argument inductively by looking at $(a_1,a_{i+1})$ and $(a_i,a_m)$ for $3\le i\le m-2$, we get $a_m=\max\{a_p: 1\le p\le m\}$, as desired.
\end{proof}
We are ready now for the 
\begin{proof}[Proof of \Cref{prop_noCmc}]
It suffices to prove the first assertion.

Assume the contrary, that there exists $n\ge (m-1)r$ such that $G_{n+r}^c$ contained an induced cycle $a_1,a_2,\ldots,a_m,a_1$ where $1\le a_i\le n+r, a_i\neq a_j$ for $i\neq j$. We may assume that $a_1=\min\{a_1,a_2,\ldots,a_m\}$.

Hence $(a_p,a_q)\in G_{n+r}$ for all $1\le p<q\le m$ such that $q-p\ge 2$ and $(p,q)\neq (1,m)$. By the definition of the stability index, for each pair of integers $(p,q)$ such that $1 \le p, q\le m, q-p\ge 2$ and $(p,q)\neq (1,m)$, there exists $(i_{pq},j_{pq})\in G_r$ where $1\le i_{pq} <j_{pq}\le r$ such that $(a_p,a_q)^\le \in \Delta((i_{pq},j_{pq}),n)$. As in the proof of \Cref{lem_inducedanticyc}, we have

\textbf{Claim 1:}  For all $1\le p\le m$, none of the points $(a_p,a_{p+1}), (a_{p+1},a_p)$ belongs to 
$$
\Gamma=\mathop{\bigcup_{1\le p<q\le r}}_{q\ge p+2, (p,q)\neq (1,m)} \Delta_{pq}.
$$

For $2\le p\le m-2$, note that $(a_p,a_{p+1})\notin \Delta_{1(p+1)}$ by Claim 1. Hence the following chain is false
\[
0\le a_p-i_{1(p+1)} \le a_{p+1}-j_{1(p+1)} \le n.
\]
Since $a_p\ge a_1$ and $(a_1,a_{p+1})\in \Delta_{1(p+1)}$, the first inequality is true. The last inequality is also true, so the middle one is false. Equivalently, we get
\begin{equation}
 \label{eq_ineq_ap+1small}
 a_{p+1}-a_p< j_{1(p+1)} -i_{1(p+1)} \quad \text{for $2\le p\le m-2$}.
\end{equation}
For $3\le p\le m-1$, note that $(a_{p+1},a_p) \notin \Delta_{1p}$ by Claim 1. Therefore the following chain is invalid
\[
0\le a_{p+1}-i_{1p} \le a_p-j_{1p}\le n. 
\]
As $a_{p+1}\ge a_1$, the first and last inequalities are valid, hence the middle one is not. Thus
\begin{equation}
\label{eq_ineq_apsmall}
 a_p - a_{p+1} < j_{1p}-i_{1p} \quad \text{for $3\le p\le m-1$}.
\end{equation}
Applying \Cref{lem_twopoints_corners} with the data $(A_{13}, A_{2m}, (a_1,a_3), (a_2,a_m))$ as in the proof of \Cref{lem_inducedanticyc}, we get
\begin{align}
a_1< & i_{2m} \le a_2, \label{eq_ineq_a1_small}\\
a_3 & \le j_{13}+n < a_m. \label{eq_ineq_a3m}
\end{align}

\textsf{Case I:} $a_2\le a_m$. By \Cref{lem_inducedanticyc}, $a_m=\max\{a_1,\ldots,a_m\}$.

\textbf{Case I.1:} $a_2 \le a_{m-1}$. By Claim 1, $(a_1,a_2)\notin \Delta_{1(m-1)}$ hence the following chain is false
\[
0\le a_1-i_{1(m-1)} \le a_2-j_{1(m-1)} \le n.
\]
The first inequality in the chain holds since $(a_1,a_{m-1})\in \Delta_{1(m-1)}$, and the last inequality also holds since $a_2\le a_{m-1}\le n+j_{1(m-1)}$. Hence the middle inequality is false, namely
\begin{equation}
 \label{eq_ineq_a2small}
 a_2-a_1 < j_{1(m-1)}-i_{1(m-1)}.
\end{equation}
Now $(a_{m-1},a_m)\notin \Delta_{2m}$ by Claim 1 so the following chain is false
\[
0\le a_{m-1}-i_{2m} \le a_m-j_{2m} \le n.
\]
Since $(a_2,a_m)\in \Delta_{2m}$ and $a_2 \le a_{m-1}$ by the hypothesis of Case I.1, we see that the first and last inequalities of the last chain are true. Thus the middle one is false, namely
\begin{equation}
 \label{eq_ineq_amsmall}
 a_m-a_{m-1} < j_{2m}-i_{2m}.
\end{equation}
Using \eqref{eq_ineq_a2small}, \eqref{eq_ineq_ap+1small} multiple times, and \eqref{eq_ineq_amsmall}, we get
\begin{align*}
a_m-a_1 &< (j_{1(m-1)}-i_{1(m-1)})+\sum_{p=2}^{m-2} \left( j_{1(p+1)} -i_{1(p+1)}\right)+(j_{2m}-i_{2m})\\
        & < (m-2)\max\{j_{pq}: 1\le p<q\le m, q\ge p+2, (p,q)\neq (1,m)\} \\
        & \le (m-2)r.
\end{align*}
Together with \eqref{eq_ineq_a1_small} and \eqref{eq_ineq_a3m}, we get
\[
n+j_{13}-i_{2m}< a_m-a_1 < (m-2)r,
\]
so $n< (m-2)r+i_{2m} \le (m-1)r$. This contradicts the hypothesis $n\ge (m-1)r$ from above.

\textbf{Case I.2:} $a_2 > a_{m-1}$.

Consider the points $(a_{m-1},a_2)\in \Delta_{2(m-1)}$  and $(a_3,a_m)\in \Delta_{3m}$. Since none of the points $(a_3,a_2)$, $(a_{m-1},a_m)$ belongs to  $\Gamma$, by \Cref{lem_twopoints_corners} and the fact that $a_m=\max\{a_i: 1\le i\le m\}$, we get $i_{2(m-1)}<i_{3m}$, and 
\begin{equation}
 \label{eq_ineq_am-1small}
 a_{m-1} < i_{3m} \le a_3.
\end{equation}

Consider the points $(a_1,a_{m-2})\in \Delta_{1(m-2)}$ and $(a_{m-1},a_2)\in \Delta_{2(m-1)}$. Arguing as above using \Cref{lem_twopoints_corners} and the fact that $a_1\le a_{m-1}$, we get
\begin{equation}
\label{eq_ineq_a2large}
 a_2 > n+j_{1(m-2)}\ge a_{m-2}.
\end{equation}

We have $(a_3,a_2)\notin \Delta_{2(m-1)}$ per Claim 1, so the following chain is invalid
\[
0\le a_3-i_{2(m-1)} \le a_2-j_{2(m-1)}\le n. 
\]
Thanks to \eqref{eq_ineq_am-1small}, $a_3>a_{m-1}\ge i_{2(m-1)}$ and $a_2-j_{2(m-1)}\le n$, the middle inequality is false, namely
\begin{equation}
\label{eq_ineq_a2-a3small}
 a_2- a_3 < j_{2(m-1)}-i_{2(m-1)}.
\end{equation}
Combining \eqref{eq_ineq_am-1small}, \eqref{eq_ineq_a2large}, \eqref{eq_ineq_apsmall}, and \eqref{eq_ineq_a2-a3small}, we obtain
\begin{align*}
n+j_{1(m-2)}-i_{3m} &< a_2-a_{m-1} \\
                    &=(a_2-a_3)+\sum_{p=3}^{m-2}(a_p-a_{p+1})\\
                    &< j_{2(m-1)}-i_{2(m-1)}+\sum_{p=3}^{m-2}(j_{1p}-i_{1p} ) \quad \text{(by \eqref{eq_ineq_a2-a3small} and \eqref{eq_ineq_apsmall})}\\
                    & < (m-3) \max\{j_{pq}: 1\le p<q\le m, q\ge p+2, (p,q)\neq (1,m)\} \\
        & \le (m-3)r.
\end{align*}
Hence $n < (m-3)r+i_{3m} \le (m-2)r,$ a contradiction. This finishes Case I.

\textsf{Case II:} $a_2 > a_m$. Relabel the vertices as follows: $b_1=a_1, b_i=a_{m+2-i}$ for $2\le i\le m$. Then $b_1=\min\{b_1,\ldots,b_m\}$ and $b_2 < b_m$. Arguing as for Case I, we finish Case II and the proof.
\end{proof}
Recall that a graph $G$ is \emph{weakly chordal} if neither $G$ nor $G^c$ contains an induced cycle of length at least $5$. The following example shows that one cannot improve the conclusion of \Cref{prop_noCmc} in general.
\begin{ex}
\label{ex_not_weaklychordal}
In general, it may happen that the graph $G_n$ corresponding to $I_n$ is not weakly chordal for all large $n$.

Take $I_4=(x_1x_3,x_2x_4)$ and $I_n=\Inc_{4,n} (I_4)$ for all $n\ge 4$. Explicitly, for all $n\ge 4$, 
\[
I_n=\left(x_ix_j: 1\le i< j\le n, j-i\ge 2, (i,j)\neq (1,n) \right).
\]
For each $n\ge 4$, $G_n^c$ contains the induced cycle with vertices $1, 2, 3,\ldots,n$, of length $n$, as one may easily check.

In particular, for all $n\ge 5$, $G_n$ is not weakly chordal. Moreover for each $n\ge 6$, 
$\reg I_n \ge \reg I(C_n^c)=3.$ Together with \Cref{thm_reg<=3} below, we conclude that $\reg I_n=3$ for all $n\ge 6$.

On the other hand, using \Cref{lem_twopoints_induced2K2} for $(i_1,j_1)=(1,3), (i_2,j_2)=(2,4)$,  we deduce that $G_{n+4}$ has no induced $2K_2$ for all 
$n\ge \max\{2(3-1+2-4),3+4-2\times 1,1\}=5.$ In other words, $\indmatch(G_n)=1$ for all $n\ge 9$. Therefore we get $\reg(I_n) =3 > 2= \indmatch(G_n)+1$ for all $n\ge 9$.
\end{ex}

\section{Long induced anticycles}
\label{sect_longanticycles}
In this section, we fix the following notations.
\begin{notn}
\label{notn_chainofedgeids}
Let $\Icc=(I_n)_{n\ge 1}$ be an $\Inc$-invariant chain of eventually nonzero edge ideals. Let $r=\ind(\Icc)$ be the stability index of the chain. Denote $E(G_r)=\{\{i_1,j_1\},\ldots,\{i_s,j_s\}\}$ where $1\le i_t<j_t \le r,\ i_1\le i_2\le \cdots\le i_s$ and if $i_t=i_{t+1}$ then $j_t<j_{t+1}$. Denote $q=\max\{1\le t\le s: i_t=i_1\}$.
\end{notn}
We have seen in \Cref{prop_noCmc} that $G_n$ does \emph{not} contain \emph{short} induced anticycles, namely anticycle of the form $C_m^c$, for $5\le m \le \frac{n}{r}$. The following result provides the complementary information, revealing that $G_n$ does contain \emph{long} induced anticycles under some extra condition on the edges of $G_r$. This result is important later in \Cref{sect_converge} in the characterization of the case that $\reg I_n=2$ for all $n\gg 0$.
\begin{prop}
\label{prop_reg3_specialcase1}
Keep using \Cref{notn_chainofedgeids}.  Assume $\max\{j_1,\ldots,j_s\}=j_q+1$ and $\min\{j_t-i_t: t=1,\ldots,s\}\ge 2$.  Then for all  $n\ge 3r$, there is an inequality $\reg(I_n)\ge 3$.
\end{prop}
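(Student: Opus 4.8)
The goal is to produce a long induced anticycle of $G_n$ for $n \ge 3r$, since by Fröberg's theorem (\Cref{lem_reg_indmatch}(2)) and the inequality $\reg I(G) \ge \reg I(H)$ for an induced subgraph $H$ (which follows from \Cref{lem_reg_edgeideal_deletion} applied repeatedly, or from the fact that deletion of vertices corresponds to specialization), the existence of an induced $C_m^c$ with $m \ge 4$ forces $\reg I_n \ge \reg I(C_m^c) \ge 3$ — here using that $I(C_m^c)$ does not have a $2$-linear resolution because $C_m^c$ is not cochordal (its complement $C_m$ has an induced cycle of length $\ge 4$ when $m \ge 4$, in fact $=m$). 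So the entire task reduces to a combinatorial construction: exhibit an explicit set of vertices in $[n]$ whose induced subgraph in $G_n$ is an anticycle.

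The plan is to work through \Cref{lem_integralpoints}, which identifies edges of $G_n = G_{(n-r)+r}$ with integral points $(u,v)^\le$ lying in the union of the triangles $\Delta((i_t,j_t), n-r)$. Writing $m_0 = \min\{j_t - i_t : t=1,\dots,s\} \ge 2$ and recalling $\max_t j_t = j_q + 1$, I would try to pick a sequence of vertices $a_1 < a_2 < \cdots < a_N$ in $[n]$ arranged so that two vertices $a_p, a_{p'}$ are \emph{non-adjacent} in $G_n$ (i.e.\ $\{a_p, a_{p'}\}$ is an edge of $G_n^c$) precisely when they are "close", $|p - p'| = 1$ (cyclically), and adjacent otherwise. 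Concretely: non-adjacency means $(a_p, a_{p'})^\le$ avoids all the triangles, and since the "thinnest" constraint $j_t - i_t = m_0$ roughly says $x_a x_b \in I_n$ whenever $b - a \ge m_0$ and the pair is reachable by an increasing map, I expect the right choice is an arithmetic-progression-like spacing where consecutive $a_p$ differ by less than $m_0$ (to stay out of triangles, forcing non-edges) but $a_p$ and $a_{p+2}$ differ by at least $m_0$ and lie in a reachable position (forcing edges), while the "wrap-around" pair $\{a_1, a_N\}$ is the special one excluded like the $(1,n)$-type corner. The edge $\{i_q, j_q\}$ with $j_q = \max_t j_t - 1$ and the minimal-gap edge together should supply enough triangles to make all the required long-range adjacencies hold once $n \ge 3r$ gives room for $N$ to be large (on the order of $n/r$ or a constant multiple thereof, but in any case $\ge 4$, which is all we need).

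The key steps, in order: (1) record the reduction to "$G_n$ has an induced $C_m^c$ with $m \ge 4$ $\Rightarrow$ $\reg I_n \ge 3$", citing Fröberg and induced-subgraph monotonicity of regularity; (2) fix $n \ge 3r$, so $n - r \ge 2r$, and translate via \Cref{lem_integralpoints}; (3) define the candidate vertex set explicitly in terms of $i_1, i_q, j_q, m_0$ and $n$ — I would guess something like $a_p = i_1 + (p-1)$ for $p$ up to some bound, or a slightly dilated version, chosen so consecutive gaps are $1 < m_0$; (4) verify the two families of conditions: non-adjacency of cyclically consecutive pairs (the point stays below the hypotenuse / left of all triangle apexes, using $m_0 \ge 2$ so gap $1$ is too small), and adjacency of all non-consecutive pairs except $\{a_1, a_N\}$ (the point $(a_p, a_{p'})^\le$ lands in $\Delta((i_q, j_q), n-r)$ or in the minimal-gap triangle, using $n - r \ge 2r$ for reachability), and finally that $\{a_1, a_N\}$ is genuinely a non-edge while the near pairs are; (5) conclude $m = N \ge 4$.

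The main obstacle will be step (4): constructing the vertex set so that \emph{simultaneously} all $\binom{N}{2} - N$ long-range pairs are edges and all $N$ short-range pairs are non-edges, using only the two distinguished edges $\{i_q,j_q\}$ (with $j_q+1$ the global max second-coordinate) and the minimal-gap edge. The hypothesis "$\max_t j_t = j_q + 1$" is delicate and surely used precisely here — it presumably guarantees that the apex column of the "widest" triangle is only one past $j_q$, so that a vertex placed just to the right of column $j_q$ still fails to be covered, which is what makes the wrap-around pair $\{a_1, a_N\}$ a non-edge; getting the arithmetic of this boundary case exactly right, and checking it is compatible with all the long-range pairs being edges, is the crux. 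I would also need to be careful that the construction actually requires $n \ge 3r$ and not merely $n \ge 2r$ — the extra $r$ is likely what guarantees enough vertices for $N \ge 4$ and enough "reachability slack" $n - r \ge 2\max_t j_t$ to apply the triangle membership criterion freely.
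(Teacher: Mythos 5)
Your opening reduction coincides with the paper's: by Fr\"oberg's theorem (\Cref{lem_reg_indmatch}(2)) it suffices to exhibit an induced anticycle $C_m^c$, $m\ge 4$, in $G_n$ for $n\ge 3r$ (in fact, once $G_n$ is not cochordal one has $\reg I_n\neq 2$, and $\reg I_n\ge 2$ for a nonzero quadratic ideal already gives $\reg I_n\ge 3$, so the induced-subgraph monotonicity you invoke is unnecessary; note also that \Cref{lem_reg_edgeideal_deletion} only bounds regularity from above, so it does not prove that monotonicity). The genuine gap is that everything after this reduction --- the explicit construction of the anticycle, which is the entire content of the proposition and occupies Constructions \ref{constr_Jsets}--\ref{constr_inducedanticycle} and Lemmas \ref{lem_length_inducedanticycle}--\ref{lem_non_consecutivevert} in the paper --- is only guessed at in your proposal, and the guess is wrong in general. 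You propose an arithmetic-progression-like vertex set with a fixed common gap below $m_0=\min_t(j_t-i_t)$, with all long-range adjacencies supplied by ``only the two distinguished edges $\{i_q,j_q\}$ and the minimal-gap edge''. But membership of $(u,v)^{\le}$ in $\Delta((i_t,j_t),n-r)$ imposes not only $v-u\ge j_t-i_t$ but also the positional constraints $u\ge i_t$ and $v\le j_t+(n-r)$, so which triangles are usable depends on where in $[n]$ the pair sits, and no constant-gap progression built on two triangles can in general meet all the edge and non-edge requirements simultaneously.

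A concrete counterexample to your plan: $r=7$, $E(G_r)=\{\{1,6\},\{5,7\}\}$, so $q=1$, $j_q=6$, $\max_t j_t=7=j_q+1$, $m_0=2$, and the edges of $G_n$ are the pairs in $\Delta((1,6),n-7)\cup\Delta((5,7),n-7)$, whose tops are $n-1$ and $n$. If the common gap is $1$, then making $\{a_1,a_3\}$ (gap $2$) an edge forces $a_1\ge 5$, but then the wrap pair $\{a_1,a_N\}$ also lies in $\Delta((5,7),n-7)$ (every vertex is at most $n$), so it cannot be a non-edge; if the common gap is at least $2$, then any consecutive pair $\{x,x+g\}$ with $x\ge 5$ is an edge via $\Delta((5,7),n-7)$, and the progression must contain such pairs since it has to climb to height about $n$. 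So the gaps must vary: here the anticycle is (essentially what \Cref{constr_inducedanticycle} produces) $1,5,6,7,\ldots,n-1,n$, with gaps $4,1,\ldots,1$. In general the paper first sorts the triangles twice, by left endpoints (Algorithm~\ref{alg_Jsets}) and by heights (Algorithm~\ref{alg_Ksets}), extracting families $u_1,\ldots,u_\beta$ and $v_1,\ldots,v_\gamma$ of edges --- typically more than your two --- and then generates the vertices with step sizes $j_{u_t}-i_{u_t}-1$, respectively $j_{v_t}-i_{v_t}-1$, which change as the running vertex crosses the left sides, respectively the tops, of the corresponding triangles (Algorithms~\ref{alg:anticycle_initialvertices}--\ref{alg:anticycle_finalvertices}); verifying all adjacency and non-adjacency conditions is then the content of Lemmas \ref{lem_consecutivevertices} and \ref{lem_non_consecutivevert}. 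You do locate correctly where the hypotheses enter ($\max_t j_t=j_q+1$ makes the top vertex at height $n-r+j_q+1$ a non-neighbour of $a_1$ while every other vertex, being of height at most $n-r+j_q$, is reachable from $a_1$ through $\Delta((i_q,j_q),n-r)$; and $m_0\ge 2$ makes the step sizes positive), but you explicitly leave this crux unresolved, so the proposal is a plan rather than a proof.
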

The significance of the two hypotheses $\max\{j_1,\ldots,j_s\}=j_q+1$ and $\min\{j_t-i_t: t=1,\ldots,s\}\ge 2$ will be apparent in the equivalence between statements (1) and (3) of \Cref{thm_convergence}.
 
The most difficult part of the proof is certain construction of long induced anticycles in the graph $G_n$, specifically \Cref{constr_inducedanticycle}. The following notations are used throughout the proof and the construction.
\begin{notn}
\label{notn_setsandindices}
For simplicity, denote $\Delta_t=\Delta((i_t,j_t),n)$ for any $t=1,\ldots,s$.  Set $J=\{1,2,\ldots,s\},\ J_1=\{t\in J\mid j_t-i_t=\min\{j_1-i_1,\ldots,j_s-i_s\}\}$. Denote
\begin{align*}
b=\min\{t\mid j_t=\max\{j_1,\ldots,j_s\}\}, & \quad B=\max\{t\mid j_t=\max\{j_1,\ldots,j_s\}\},\\
h=\min J_1, & \quad H=\max J_1.
\end{align*}
\end{notn}
The following observation is useful to clarify the sets and indices in \Cref{notn_setsandindices}.
\begin{lem}
\label{lem_BandJ1}
With \Cref{notn_setsandindices}, we have $j_H\le j_B$. Moreover, the following statements are equivalent:
\begin{enumerate}[\quad \rm (1)]
 \item $B=H$;
 \item $B\in J_1$;
 \item $j_H=j_B$.
\end{enumerate}
\end{lem}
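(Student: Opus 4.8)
\textbf{Proof plan for Lemma \ref{lem_BandJ1}.}
The plan is to unwind the definitions in \Cref{notn_setsandindices} and exploit the ordering convention in \Cref{notn_chainofedgeids}. Recall that $b,B$ are the first and last indices $t$ achieving $j_t=\max\{j_1,\ldots,j_s\}=:M$, while $h,H$ are the first and last indices in $J_1$, i.e.\ those achieving $j_t-i_t=\min\{j_1-i_1,\ldots,j_s-i_s\}=:\mu$. For the first assertion $j_H\le j_B$, I would simply observe that $j_H\le M=j_B$ by the very definition of $M$ as the maximum of the $j_t$'s; there is nothing more to it. The real content is the chain of equivalences.

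For the equivalences, I would go around the cycle $(1)\Rightarrow(2)\Rightarrow(3)\Rightarrow(1)$. The implication $(1)\Rightarrow(2)$ is immediate: $B=H$ and $H\in J_1$ give $B\in J_1$. For $(2)\Rightarrow(3)$: if $B\in J_1$ then $j_B-i_B=\mu$; on the other hand $H\in J_1$ forces $j_H-i_H=\mu$ as well, so $i_B-i_H=j_B-j_H\ge 0$ since $j_H\le j_B$. To conclude $j_H=j_B$ I would use the convention $i_1\le i_2\le\cdots\le i_s$ together with the definition of $H$ as $\max J_1$: since $B\in J_1$ we have $B\le H$, hence $i_B\le i_H$; combined with $i_B\ge i_H$ this gives $i_B=i_H$, and then $j_B=i_B+\mu=i_H+\mu=j_H$. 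For $(3)\Rightarrow(1)$: if $j_H=j_B=M$, then $H$ is an index achieving the maximum $M$, so by definition of $B=\max\{t\mid j_t=M\}$ we get $H\le B$; but also $B\in J_1$? — here one has to be a little careful, so instead I would argue directly that $H$ achieving $j_H=M$ together with $H\in J_1$ forces, via the minimality $\mu=j_H-i_H=M-i_H$, that $i_H$ is the \emph{largest} possible first coordinate among indices in $J_1$; and since any $t$ with $j_t=M$ has $j_t-i_t=M-i_t\le M-i_H=\mu$ by $i_t\le i_s$...

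Actually the cleanest route for $(3)\Rightarrow(1)$ is: assume $j_H=j_B$. Since $H\in J_1$, $j_H-i_H=\mu$. For any $t$ with $t>H$ we have $i_t\ge i_H$ (ordering convention), hence $j_t\ge i_t+\mu\ge i_H+\mu=j_H=j_B=M$; but $j_t\le M$, so $j_t=M$ and $i_t=i_H$, which would put $t\in J_1$, contradicting $H=\max J_1$ unless there is no such $t$, i.e.\ $H=s$. In particular $B\le s=H$. Conversely $j_H=M$ means $H\in\{t\mid j_t=M\}$, so $H\le B$. Hence $B=H$. I expect this last implication to be the only place requiring genuine care, precisely because one must juggle the two extremal definitions (max over a max-set versus max over a min-set) simultaneously and invoke the sorting convention $i_1\le\cdots\le i_s$ at the right moment; everything else is bookkeeping.
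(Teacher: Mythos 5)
Your proposal is correct and follows essentially the same route as the paper: the inequality $j_H\le j_B$ is immediate from the definition of $B$, the implications $(1)\Rightarrow(2)\Rightarrow(3)$ use exactly the paper's combination of $B\le H$, the sorting convention $i_1\le\cdots\le i_s$, and the minimality defining $J_1$. Your handling of $(3)\Rightarrow(1)$ is a mild variant (the paper deduces $i_H=i_B$, $j_H=j_B$ and invokes distinctness of the pairs $(i_t,j_t)$, whereas you show any $t>H$ would again lie in $J_1$, forcing $H=s$ and hence $B\le H\le B$), but both arguments rest on the same ingredients and yours is equally valid.
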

\begin{proof}
Clearly $j_H\le j_B=\max\{j_1,\ldots,j_s\}$. Since $H=\max J_1$, it is also clear that (1) $\Longrightarrow$ (2). We prove that (2) $\Longrightarrow$ (3) and (3) $\Longrightarrow$  (1).

(2) $\Longrightarrow$ (3): Assume that $B\in J_1$. Then $B\le H=\max J_1$ and $j_B-i_B=j_H-i_H$. From \Cref{notn_chainofedgeids}, $j_H-j_B=i_H-i_B\ge 0$ as $H\ge B$. But the definition of $B$ yields $j_H\le j_B$, hence $j_H=j_B$. Thus (3) is true.

(3) $\Longrightarrow$  (1): Assume that $j_H=j_B$. The definition of $B$ implies $H\le B$, so $i_H\le i_B$. In particular, $j_H-i_H\ge j_B-i_B$. But $j_H-i_H=\min\{j_1-i_1,\ldots,j_s-i_s\}$, so $j_H-i_H=j_B-i_B$, and hence $i_H=i_B$. The only situation when $i_H=i_B$ and $j_H=j_B$ are both valid is $B=H$. Hence (1) is true. The proof is concluded.
\end{proof}

The key step in the proof of \Cref{prop_reg3_specialcase1} is accomplished by the following constructions (\ref{constr_Jsets}, \ref{constr_Ksets} and \ref{constr_inducedanticycle}). The last constructions employ the first two, and we clarify all of them in \Cref{ex_anticycle}.

\begin{constr}
\label{constr_Jsets}
Keep using \Cref{notn_setsandindices}. In this construction, \emph{we assume that $i_b\le i_h$}. 

Note that in this case $i_1=i_q < i_b$, since $j_q=\max\{j_t: i_t=i_1\} < \max\{j_t: t=1,\ldots,s\} = j_b$. Hence $q\in J\setminus J_1$. Indeed, if not, then  $j_q-i_q=\min\{j_t-i_t: 1\le t\le s\}$. This implies $q=1$, and  
$
h=\min \{t: j_t-i_t=\min\{j_1-i_1,\ldots,j_s-i_s\}\}=1,
$
and by the assumption, $i_b \le i_h=i_1$, a contradiction. Thus $q\in J\setminus J_1$ and $i_q < i_b\le i_h$. This justifies the first step of \Cref{alg_Jsets}, which is useful for sorting  the triangles $\Delta_p$.
\begin{algorithm}[H]
\caption{Rearrange the positions of the triangles $\Delta_p$ with $i_b\le i_p\le i_h$}
\label{alg_Jsets}
\begin{flushleft}
\textbf{Input}: The set of edges $\{\{i_1,j_1\},\ldots,\{i_s,j_s\}\}$ of $G_r$ given that $i_b\le i_h$. \\
\textbf{Output}: A sequence of sets $J_1,\ldots,J_\beta$ and a sequence of integers $u_1,\ldots,u_\beta \in \{1,\ldots,s\}$.
\end{flushleft}
\begin{algorithmic}
\State{$t:=1$  }	
\State{$u_1:=h (=\min J_1)$  }	
\While{$J_t\neq \emptyset$ and $i_{u_t}\ge i_{b}$}
\State {$J_{t+1}:=\{p \mid j_p-i_p=\min\{j_a-i_a\mid a\in J\setminus \cup_{i=1}^{t} J_i\;\text{and}\; i_a<i_{u_{t}}\}\}$}
\State{$u_{t+1}:=\min  J_{t+1} $}
\State{$t:=t+1$}
\EndWhile
\end{algorithmic}
\end{algorithm}

Apply Algorithm~\ref{alg_Jsets} and assume that $(\beta-1)$ is the number of repeating steps of this algorithm. By what said above, $\beta\ge 2$, as $J_2\neq \emptyset$. The algorithm gives
\[
J_{\beta}=\{p \mid j_p-i_p=\min\{j_a-i_a\mid a\in J\setminus \cup_{t=1}^{\beta-1} J_t\;\text{and}\; i_a<i_{u_{\beta-1}}\}\}, 
\]
and $u_{\beta}= \min J_{\beta}$. Note that $q\in J\setminus \bigcup_{t=1}^{\beta-1} J_t$ since $i_q< i_b$ and $i_t\ge i_b$ for any $t\in J_1 \cup \cdots \cup J_{\beta-1}$. Hence $J_{\beta}\neq \emptyset$, so because the algorithm stops, we get $i_{u_{\beta}} <i_b$.

It follows that  we obtain a sequence $h=u_1,u_2,\ldots,u_{\beta}\in J$ such that 
\begin{align*}
i_{u_{\beta}} &<i_b\le i_{u_{\beta-1}}<\cdots<i_{u_2}<i_{u_1}=i_h,\\
j_{u_{t+1}}-i_{u_{t+1}}&>j_{u_t}-i_{u_t}\ge 2\quad \text{for}\; t=1,\ldots,\beta-1.
\end{align*}
\end{constr}

\begin{constr}
\label{constr_Ksets}
Keep using \Cref{notn_setsandindices}.  The following algorithm is used to rearrange the positions of the triangles $\Delta_p$. 

\begin{algorithm}[H]
	\caption{Rearrange the positions of the triangles $\Delta_p$ with  $j_{H}<j_p\le j_B$}
	\label{alg_Ksets}
    \begin{flushleft}
    \textbf{Input}: The set of edges $\{\{i_1,j_1\},\ldots,\{i_s,j_s\}\}$ of $G_r$. \\
\textbf{Output}: A sequence of sets $J_1=K_1, K_2, \ldots,K_\gamma$ and a sequence of integers $v_1,\ldots,v_\gamma \in \{1,\ldots,s\}$.
  \end{flushleft}
	\begin{algorithmic}
		\State{$K_1:=J_1$  }
		\State{$t:=1$  }	
		\State{$v_1:=H (=\max K_1)$  }
		\While{$K_t\neq \emptyset$ and $j_{v_t} < j_{B}$}
		\State {$K_{t+1}:=\{p \mid j_p-i_p=\min\{j_a-i_a\mid a\in J\setminus \cup_{i=1}^{t} K_i\;\text{and}\; j_a>j_{v_{t}}\}\}$}
		\State{$v_{t+1}:=\max K_{t+1} $}
		\State{$t:=t+1$}
		\EndWhile
	\end{algorithmic}
\end{algorithm}
In the case $j_H=j_B$, note that $H=B$ by \Cref{lem_BandJ1}, and therefore \Cref{alg_Ksets} does nothing. In other words, $K_1=J_1$, $\gamma=1$, and $v_1=H=B$ in this case.

Consider the case $j_H<j_B$. Apply Algorithm~\ref{alg_Ksets} and assume that $(\gamma-1)$ is the number of repeating steps of this algorithm. Thanks to \Cref{lem_BandJ1}, $B\notin K_1=J_1$ and $j_{v_1}=j_H < j_B$. So $K_2\neq \emptyset$ and $\gamma \ge 2$. 

\textbf{Claim 1:} If $B\in K_t$ for some $2\le t\le \gamma$, then $B=\max K_t=v_t$.

Indeed, since $B\in K_t$, $B\le \max K_t=v_t$. From the setting of \Cref{notn_chainofedgeids}, $i_B\le i_{v_t}$. Since $B\in K_t$, $j_B-i_B=j_{v_t}-i_{v_t}$, so $j_{v_t}-j_B=i_{v_t}-i_B\ge 0$. The definition of $B$ implies $j_{v_t}-j_B\le 0$, so $0=j_{v_t}-j_B=i_{v_t}-i_B$. This forces $B=v_t$.

\textbf{Claim 2:} It holds that $B\in J\setminus \bigcup_{t=1}^{\gamma-1} K_t$. In particular $B\in K_\gamma$ and
\begin{equation}
\label{eq_vgammaB}
v_{\gamma}=B.
\end{equation}

Indeed, we have since that $B\notin K_1$. Assume that $B\in K_t$ for some $2\le t\le \gamma-1$, then $B=v_t$ by Claim 1. But then $j_{v_t}=j_B$, contradicting the description of \Cref{alg_Ksets}. Hence $B\in J\setminus \bigcup_{t=1}^{\gamma-1} K_t$. The description of the algorithm yields $j_{v_{\gamma-1}}<j_B$, so $K_\gamma \neq \emptyset$.  Since the algorithm stops, we deduce $j_{v_\gamma}=j_B$. The definition of $B$ in \Cref{notn_setsandindices} yields $v_\gamma \le B$, so $i_{v_\gamma}\le i_B$. In particular, $j_B-i_B \le j_{v_\gamma}-i_{v_\gamma}$, so the definition of $K_\gamma$, that reads
\[
K_\gamma=\{p \mid j_p-i_p=\min\{j_a-i_a\mid a\in J\setminus \cup_{i=1}^{\gamma-1} K_i\;\text{and}\; j_a>j_{v_{\gamma-1}}\}\}
\]
implies that $B\in K_\gamma$. Using Claim 1, we conclude that $B=\max K_\gamma=v_\gamma$.

So at the end of Algorithm~\ref{alg_Ksets}, we obtain a sequence of indices $H=v_1,v_2,\ldots,v_{\gamma}=B$ such that
\begin{align*}
j_B = j_{v_{\gamma}} &>\cdots>j_{v_2}>j_{v_1}=j_H,\\
j_{v_{t+1}}-i_{v_{t+1}}& >j_{v_t}-i_{v_t}\ge 2\quad \text{for all}\; t=1,\ldots,\gamma-1.
\end{align*}
\end{constr}

\begin{constr}
\label{constr_inducedanticycle}
Keep using \Cref{notn_setsandindices}. Let $h=u_1,u_2,\ldots,u_{\beta}$ be as in \Cref{constr_Jsets} and $H=v_1,v_2,\ldots,v_{\gamma}=B$ be as in \Cref{constr_Ksets} (recalling Equation \eqref{eq_vgammaB}). Now for $n\ge 2r$, we construct an induced anticycle $C_m^c$ in $G_{n+r}$ by considering the following two cases.

\noindent\textbf{Case I: $i_b \le i_h.$}
We employ Algorithm~\ref{alg:anticycle_initialvertices} to construct the initial vertices of $C_m^c$ in $G_{n+r}.$ Note that the numbers $\beta, h=u_1, u_2,\ldots,u_{\beta}$ in \Cref{alg:anticycle_initialvertices} come from the output of \Cref{alg_Jsets}.

\begin{algorithm}[H]
	\caption{Generate the initial vertices of an induced anticycle in $G_{n+r}$}
	\label{alg:anticycle_initialvertices} 
\begin{flushleft}
\textbf{Input}: The set of edges $\{\{i_1,j_1\},\ldots,\{i_s,j_s\}\}$ of $G_r$ given that $\min\{j_t-i_t \mid t=1,\ldots,s\}\ge 2$ and $i_b\le i_h$, and an integer $n\ge 2r$. \\
\textbf{Output}: A sequence of numbers $a_1,\ldots,a_d,a_{d+1} \in \{1,2,\ldots,n+r\}$.
\end{flushleft}
	\begin{algorithmic} 
		\State{$\epsilon:= \max\{t \in \Z_{\ge 0} \mid t(j_{u_{\beta}} - i_{u_{\beta}}-1) + i_{u_{\beta}}< i_{b}\}$}
		\State{$a_1:=\epsilon(j_{u_{\beta}} - i_{u_{\beta}}-1) + i_{u_{\beta}}$}
		\State{${\rm term}:=a_1$}
		\State{$d:=1$}
		\While{${\rm term} < i_h$}
		\State{$t=\min\{\alpha\in\{1,\ldots,\beta\} \mid i_{u_\alpha}\le {\rm term}\}$}
		\State{${\rm term}:={\rm term}+ j_{u_t} -i_{u_t} -1$}
		\State{$a_{d+1}:={\rm term}$}
		\State{$d:=d+1$}
		\EndWhile
	\end{algorithmic}
\end{algorithm}
Note that
\begin{equation}
\label{eq_a2-a1}
a_2=a_1+j_{u_\beta} -i_{u_\beta} -1.
\end{equation}
Indeed, by \Cref{constr_Jsets}, $i_{u_\beta} < i_b \le i_{u_{\beta-1}}$, so $a_1=\epsilon(j_{u_{\beta}} - i_{u_{\beta}}-1) + i_{u_{\beta}} <i_b \le i_{u_{\beta-1}}$. Hence at the step $d=1$, 
\[
t=\min\{\alpha\in\{1,\ldots,\beta\} \mid i_{u_\alpha}\le a_1\}=\beta.
\]
From this, \eqref{eq_a2-a1} follows.

At the end of  Algorithm~\ref{alg:anticycle_initialvertices}, we get a sequence $a_1,\ldots,a_{d+1}$ satisfying 
\[
i_q\le i_{u_{\beta}}\leq a_1<i_b\le a_2<a_3<\cdots<a_d< i_h\le a_{d+1}=a_d+j_{u_t}-i_{u_t}-1,
\] 
where $t=\min\{\alpha\in\{1,\ldots,\beta\} \mid i_{u_\alpha}\le a_d\}$.

At this point, note that $v_\gamma=B$ per \eqref{eq_vgammaB}, so
\[
a_{d+1}=a_d+j_{u_t}-i_{u_t}-1 < i_h+j_{u_t}-i_{u_t}-1 < 2r < n+i_{v_\gamma} = n +i_B,
\]
using the hypothesis on $n$. Thus we may apply Algorithm~\ref{alg:anticycle_finalvertices} with $a_\textsf{index}:=a_{d+1}$. Note that the numbers $\gamma, H=v_1, v_2,\ldots,v_{\gamma}=B$ in \Cref{alg:anticycle_finalvertices} come from the output of \Cref{alg_Ksets}.

\begin{algorithm}[H]
	\caption{Generate the final vertices of an induced anticycle in $G_{n+r}$}
	\label{alg:anticycle_finalvertices}
	\begin{flushleft}
\textbf{Input}: The set of edges $\{\{i_1,j_1\},\ldots,\{i_s,j_s\}\}$ of $G_r$ and integers $n\ge 2r$ and $i_h \le a_\textsf{index} \le n+r$.  \\
\textbf{Output}: A sequence of numbers $a_\textsf{index},a_{\textsf{index}+1}\ldots,a_m \in \{1,2,\ldots,n+r\}$.
	\end{flushleft}
	\begin{algorithmic}
		\State{${\rm term}:= a_{\textsf{index}}$}
		\State{$\nu:=1$}
		\While{${\rm term}\le n+i_B$}
		\State{$t=\min\{\alpha\in\{1,\ldots,\gamma\} \mid {\rm term} \le n+i_{v_\alpha}\}$}
		\State{${\rm term}:= {\rm term}+j_{v_{t}} - i_{v_{t}}-1$}
		\State{$a_{\textsf{index}+\nu}:={\rm term}$}
		\State{$\nu:=\nu+1$}
		\EndWhile
		\State{$m:=\nu+1$}
		\State{$a_{m}:=n+j_B$}
	\end{algorithmic}
\end{algorithm}
\end{constr}

At the end of Algorithm \ref{alg:anticycle_finalvertices}, we get a sequence $a_{d+2},\ldots,a_m$ satisfying
\[
a_{d+1}<a_{d+2}<\cdots <a_{m-2}\le n+i_B< a_{m-1} <a_m=n+j_B.
\]

\noindent\textbf{Case II: $i_{h}<i_{b}.$}
Set $\epsilon=\max\{t \in \Z_{\ge 0} \mid t(j_{h}-i_{h}-1)+i_{h}<i_b   \}$ and define 
\begin{align*}
a_1&=\epsilon (j_{h}-i_{h}-1)+i_{h}<i_b,\\
a_2&= a_1+j_{h}-i_{h}-1\ge i_b.
\end{align*}
We are in place to apply Algorithm~\ref{alg:anticycle_finalvertices} with $a_\textsf{index}:=a_2$  since 
$$
a_2=a_1+j_h-i_h-1<i_b+j_h-i_h-1<2r < n+i_B.
$$
From the last algorithm, we obtain a sequence $a_1,a_2,\ldots,a_m$ satisfying
\[
i_h\le a_1< i_b\le a_2<a_3<\cdots <a_{m-2}\le n+i_B< a_{m-1} < a_m=n+j_B.
\]

With Lemmas \ref{lem_length_inducedanticycle}, \ref{lem_consecutivevertices}, \ref{lem_non_consecutivevert} below, we show that $a_1,\ldots,a_m$ in \Cref{constr_inducedanticycle} form an induced anticycle in $G_{n+r}$ of length $\ge 4$ for all $n\ge 2r$. But first, we consider an example of the Constructions \ref{constr_Jsets} -- \ref{constr_inducedanticycle}.

\begin{ex}
\label{ex_anticycle}
Consider the $\Inc$-invariant chain $\Icc=(I_n)_{n\ge 1}$ of edge ideals with stability index $9$ and 
$$
E(G_9)=\{\{1,5\},\{1,8\},\{2,9\},\{3,6\},\{4,7\},\{5,9\}\}.
$$
Clearly the conditions $j_q+1=\max\{j_1,\ldots,j_s\}$ and $\min \{j_t-i_t \mid t=1,\ldots,s\} \ge 2$ of \Cref{prop_reg3_specialcase1} are satisfied. In this case $i_1=1, q=2$, $j_2=8$, $\max\{j_1,\ldots,j_6 \}=9=8+1$, and $\min \{j_t-i_t \mid t=1,\ldots,6\}=3$.

The plot of the corresponding points $(i_t,j_t), t=1,\ldots,6$ is as in \Cref{fig_Anticycle_ex}.
\begin{figure}[ht!]
 \includegraphics[width=66ex]{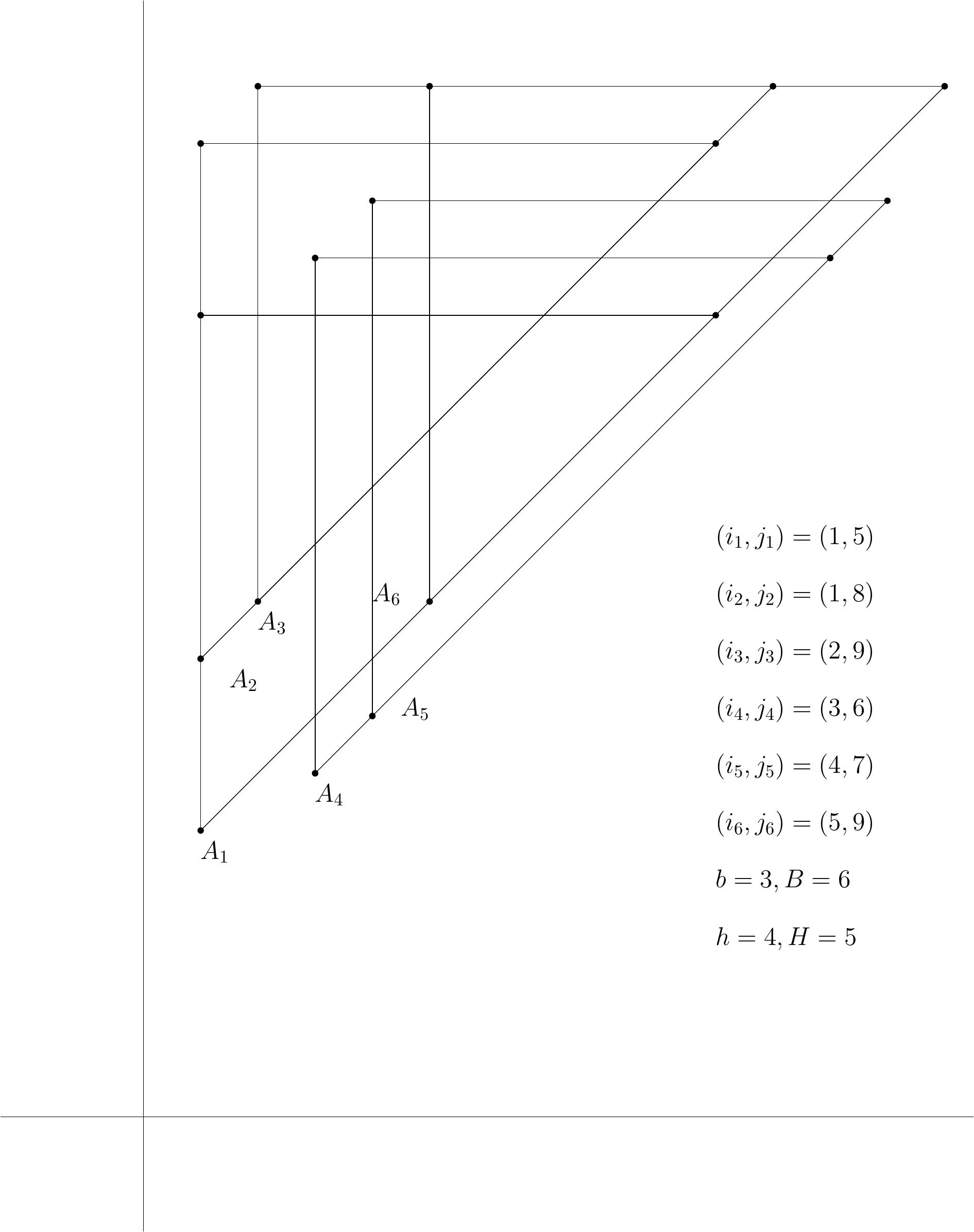}
 \caption{An example with $G_9$ having six edges}
	\label{fig_Anticycle_ex} 
\end{figure}
Running \Cref{alg_Jsets}, we get 
\begin{align*}
J_1=\{4,5\}, u_1=4, J_2=\{1\}, u_2=1, \beta =2.
\end{align*}
Running \Cref{alg_Ksets}, we get
\begin{align*}
K_1=\{4,5\}, v_1=5, K_2=\{6\}, v_2=6, \gamma=2.
\end{align*}
Note that $b=3, B=6, h=4, H=5$ and 
\[
i_b=i_3= 2 < i_h=i_4=3.
\]
Let $n=18= 2r$. Running \Cref{alg:anticycle_initialvertices} on Macaulay2 \cite{GS96} with the input $G_9$ and $n=18$, we get the sequence
\[
a_1=1, a_2=4.
\]
Running \Cref{alg:anticycle_finalvertices} with the input $G_9, n=18$ and $a_\textsf{index}=a_2=4$, noting that $i_B=i_6=5, j_B=j_6=9$, we get the sequence
\[
a_2=4, 6, 8, \ldots, 22, 24, 27.
\]
Hence $G_{27}$ has an induced anticycle $C_{13}^c$ whose consecutive vertices are
\[
1, 4, 6, 8, 10, \ldots, 22, 24, 27.
\]
Similarly, $G_{28}$ has an induced $C_{14}^c$ whose consecutive vertices are
\[
1, 4, 6, 8, 10, \ldots, 22, 24, 27, 28,
\]
while $G_{29}$ has an induced $C_{14}^c$ with consecutive vertices
\[
1, 4, 6, 8, 10, \ldots, 22, 24, 26, 29.
\]

\end{ex}

\begin{lem}
\label{lem_length_inducedanticycle}
With the notations as in \Cref{constr_inducedanticycle}, we have $m\ge 4$.
\end{lem}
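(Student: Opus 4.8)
The statement asserts that the sequence $a_1,\ldots,a_m$ produced by Construction~\ref{constr_inducedanticycle} has $m\ge 4$; equivalently, there are at least two ``middle'' vertices strictly between $a_1$ and $a_m$ (or, depending on which case of the construction we are in, between the forced initial block and the final pair $a_{m-1},a_m$). The natural approach is to track the indices through Algorithms~\ref{alg:anticycle_initialvertices} and \ref{alg:anticycle_finalvertices}, bounding below the number of \texttt{While}-iterations in each, and then check that the construction produces at least the vertices $a_1,a_{d+1},a_{m-1},a_m$ (in Case~I) or $a_1,a_2,a_{m-1},a_m$ (in Case~II), which are already four distinct numbers because of the strict chains $i_q\le i_{u_\beta}\le a_1<i_b\le a_2<\cdots$ and $\cdots\le n+i_B<a_{m-1}<a_m$ recorded immediately after the construction.

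\textbf{Case~II first, as it is cleaner.} Here $a_1<i_b\le a_2$ and then Algorithm~\ref{alg:anticycle_finalvertices} runs with $a_{\textsf{index}}=a_2$. Since $a_2<2r<n+i_B$, the \texttt{While} loop executes at least once, producing $a_3$; moreover the loop terminates only when \texttt{term} exceeds $n+i_B$, and on the final pass it appends $a_{m-1}$ with $a_{m-1}>n+i_B$, after which $a_m:=n+j_B>a_{m-1}$ is set. Thus we have produced $a_1<a_2\le a_3\le\cdots<a_{m-1}<a_m$ with $a_1<i_b\le a_2$ and $a_{m-1}>n+i_B\ge a_2$ (using $n\ge 2r$ and $i_B\le r$, so $n+i_B\ge 2r>j_h-i_h-1+i_b>a_2$); hence $a_2$ and $a_{m-1}$ are distinct internal vertices and $m\ge 4$. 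One must check the loop indeed executes enough times: the key point is that after the step producing $a_2$ the value \texttt{term} is still $<2r\le n$, well below $n+i_B$, so at least one more iteration occurs, yielding $a_3$ with $a_1<a_2<a_3$, and then the terminal assignment gives $a_{m-1}$ and $a_m$; since $a_{m-1}>n+i_B\ge n>a_3$, all of $a_1,a_2,a_3,\ldots$ and $a_{m-1},a_m$ are distinct so $m\ge 5$ in fact, a fortiori $m\ge 4$.

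\textbf{Case~I.} By Construction~\ref{constr_Jsets}, $\beta\ge 2$, and $a_1<i_b\le a_2<a_3<\cdots<a_d<i_h\le a_{d+1}$, so already $d+1\ge 2$ and $a_1,a_2,a_{d+1}$ are three distinct values (note $a_2$ exists because $a_1<i_b\le i_{u_{\beta-1}}<i_h$ forces the \texttt{While} loop of Algorithm~\ref{alg:anticycle_initialvertices} to run at least once; and $a_{d+1}$ is the first value $\ge i_h$, distinct from $a_1<i_b$). Then Algorithm~\ref{alg:anticycle_finalvertices} runs with $a_{\textsf{index}}=a_{d+1}<2r<n+i_B$, so as in Case~II it produces at least $a_{m-1}$ and $a_m=n+j_B$ with $a_{m-1}>n+i_B$; since $a_{d+1}<2r\le n<n+i_B<a_{m-1}$, the vertex $a_{m-1}$ is distinct from $a_1,a_2,a_{d+1}$. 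Collecting $a_1<a_2<a_{d+1}<a_{m-1}<a_m$ we get at least five distinct numbers, so $m\ge 4$.

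\textbf{Main obstacle.} The only delicate point is confirming that each \texttt{While} loop executes the minimum number of times needed — i.e.\ that the loop bodies do not ``overshoot'' the loop guards prematurely in a degenerate configuration. Concretely: in Algorithm~\ref{alg:anticycle_initialvertices} one must verify $a_1<i_h$ (so the loop runs, producing $a_2$), which follows from $a_1<i_b\le i_{u_{\beta-1}}<i_{u_1}=i_h$; and in Algorithm~\ref{alg:anticycle_finalvertices} one must verify $a_{\textsf{index}}\le n+i_B$ and that after one pass \texttt{term} is still $\le n+i_B$, which both follow from the crude bound $a_{\textsf{index}}<2r$ together with $n\ge 2r$ and $i_B\ge 1$, giving $a_{\textsf{index}}<2r\le n<n+i_B$, and similarly after adding $j_{v_t}-i_{v_t}-1<r$ the value stays below $2r+r=3r$, which need not be $\le n+i_B$ in general but the terminal assignments $a_{m-1},a_m$ are appended regardless once the guard fails, so they always exist. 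Hence $m\ge 4$ in every case; I would write this up by stating the two strict chains of inequalities recorded after Construction~\ref{constr_inducedanticycle} and exhibiting four explicit distinct entries among the $a_i$.
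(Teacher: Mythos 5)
Your proof is correct and follows essentially the same route as the paper, whose entire argument consists of the two bounds you isolate: by construction $a_2=a_1+(j_t-i_t-1)<i_b+(j_t-i_t-1)<2r$, while the last loop output satisfies $a_{m-1}>n+i_B>2r$ because $n\ge 2r$, so the index $m-1$ cannot equal $1$ or $2$ and hence $m\ge 4$. The only blemish is your unnecessary stronger side claims (that $a_3<n$ and hence $m\ge 5$ in Case II, and the ``five distinct numbers'' in Case I, which fails when $d=1$): the first pass of Algorithm \ref{alg:anticycle_finalvertices} may already push the term past $n+i_B$, so these need not hold in general, but since you invoke them only ``a fortiori'', the proof of $m\ge 4$ is unaffected.
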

\begin{proof}
Note that by construction, $a_2-a_1 =j_t-i_t-1$ for some $1\le t\le s$, hence $a_2=a_1+j_t-i_t-1 < i_b+j_t-i_t-1< 2r$. On the other hand, $a_{m-1} > n+i_B > 2r$, by the hypothesis on $n$. Hence $m\ge 4$.
\end{proof}

\begin{lem}
\label{lem_consecutivevertices}
With the notations as in \Cref{constr_inducedanticycle}, we have $\{a_p,a_{p+1}\}\notin E(G_{n+r})$ for all $p=1,\ldots,m$ with $a_{m+1}=a_1$.
\end{lem}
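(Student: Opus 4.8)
The plan is to translate the claim into the language of integral points in the triangles $\Delta_t = \Delta((i_t,j_t),n)$ via \Cref{lem_integralpoints}: saying $\{a_p,a_{p+1}\}\notin E(G_{n+r})$ means that the point $(a_p,a_{p+1})^{\le}$ lies in \emph{none} of the triangles $\Delta_1,\ldots,\Delta_s$. Since the $a_i$ are strictly increasing (with $a_{m+1}=a_1$ being the wrap-around edge of the anticycle), the non-wrap-around cases amount to showing $(a_p,a_{p+1})\notin \Delta_t$ for all $t$, and the single wrap-around case amounts to showing $(a_1,a_m)\notin\Delta_t$ for all $t$. I would handle these in three groups according to which stretch of the construction the indices $p$ fall into: the ``initial'' vertices produced by Algorithm~\ref{alg:anticycle_initialvertices} (or their analogue in Case II), the ``final'' vertices produced by Algorithm~\ref{alg:anticycle_finalvertices}, and the transitional/extremal edges $\{a_1,a_2\}$, $\{a_{m-1},a_m\}$, and the wrap edge $\{a_1,a_m\}$.

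For a generic consecutive pair $\{a_p,a_{p+1}\}$ coming out of the while-loops, the key point is that by construction $a_{p+1}-a_p = j_{w}-i_{w}-1$ where $w$ is one of the chosen indices $u_t$ or $v_t$, and $w$ was selected precisely so that $j_{w}-i_{w}$ is the \emph{minimum} of $j_a-i_a$ over the relevant still-unused triangles whose corner coordinate is on the correct side of $a_p$ (resp.\ $a_{p+1}$). If $(a_p,a_{p+1})\in\Delta_t=\{(x,y):0\le x-i_t\le y-j_t\le n\}$ for some $t$, then first $i_t\le a_p$ and $j_t\le a_{p+1}\le n+j_t$, and second $a_{p+1}-a_p\ge j_t-i_t$. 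The inequality $a_{p+1}-a_p = j_w-i_w-1 \ge j_t-i_t$ would force $j_t-i_t < j_w-i_w$, i.e.\ $t$ has strictly smaller ``width'' than $w$; but the construction guarantees $t$ was already available when $w$ was picked (because of the coordinate constraints $i_t\le a_p$, resp.\ $a_{p+1}\le n+i_t$, matching the side conditions $i_a<i_{u_{t-1}}$ in Algorithm~\ref{alg_Jsets} or $j_a>j_{v_{t-1}}$ in Algorithm~\ref{alg_Ksets} and the ``$\min$'' in the while-loops of Algorithms~\ref{alg:anticycle_initialvertices}--\ref{alg:anticycle_finalvertices}), so $w$ would have been chosen with width $\le j_t-i_t$, a contradiction. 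Making this bookkeeping precise — tracking exactly which triangles remain ``in play'' at each step of the loop and checking that the corner-coordinate side conditions line up between the sorting algorithms and the vertex-generating algorithms — is where the real work lies, and I expect it to be the main obstacle.

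For the extremal and wrap edges I would argue separately. The edge $\{a_1,a_2\}$: here $a_1<i_b\le a_2$ and $a_2-a_1 = j_{u_\beta}-i_{u_\beta}-1$ (Case I, using \eqref{eq_a2-a1}) or $j_h-i_h-1$ (Case II); if $(a_1,a_2)\in\Delta_t$ then $i_t\le a_1<i_b$, which pins $t$ to a triangle with small left-coordinate, and then the same width comparison as above (using $j_t-i_t\ge j_h-i_h$ by definition of $h$, resp.\ $\ge j_{u_\beta}-i_{u_\beta}$ because $u_\beta$ was the last index chosen with $i_{u_\beta}<i_b$) yields a contradiction. For $\{a_{m-1},a_m\}$ with $a_m = n+j_B$: if $(a_{m-1},a_m)\in\Delta_t$ then $j_t\le n+j_B$ gives nothing, but $a_m-j_t\le n$ forces $j_t\ge j_B$, hence $j_t=j_B=\max\{j_1,\ldots,j_s\}$ and so $i_t\le i_B$; combined with $a_{m-1}>n+i_B\ge n+i_t$ this violates $a_{m-1}-i_t\le a_m-j_t\le n$. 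Finally the wrap edge $\{a_1,a_m\}$: $a_1<i_b$ and $a_m=n+j_B$, and $a_m-a_1 > n+j_B-i_b$. If $(a_1,a_m)\in\Delta_t$ then $a_m-a_1\le$ (width of $\Delta_t$)$+n\le r-1+n$, but $j_B-i_b$ can be forced large enough (using $\max\{j_t\}=j_q+1$ and $i_b>i_1=i_q$, so $j_B-i_b \le j_q+1-i_q-1 = j_q-i_q$... ) — more carefully, $a_m-a_1 = n+j_B-a_1 > n+j_B - i_b$, and one checks $j_B-i_b$ together with $a_1\ge i_{u_\beta}\ge i_q$ forces $a_m-a_1>n+r$, which is impossible for any point in any $\Delta_t$. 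I would also note that this is exactly the edge whose absence makes the cycle on $a_1,\ldots,a_m$ in $G_{n+r}^c$ an \emph{induced} cycle rather than a chord-containing one, so it is essential, not a throwaway case.
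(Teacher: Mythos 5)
Your overall plan coincides with the paper's proof of \Cref{lem_consecutivevertices}: reduce to showing $(a_p,a_{p+1})\notin\Delta_t$ for every $t$, exploit $a_{p+1}-a_p=j_w-i_w-1$ for the greedily chosen index $w\in\{u_t,v_t\}$, and treat the edges $\{a_{m-1},a_m\}$ and $\{a_1,a_m\}$ separately. But the step you yourself flag as ``the main obstacle'' is a genuine gap, not mere bookkeeping: your claim that a triangle $\Delta_\alpha$ containing $(a_p,a_{p+1})$ ``was already available when $w$ was picked'' is false in general, because $\alpha$ may lie in an earlier set $J_\nu$ (resp.\ $K_\nu$) with $\nu<t$, and then its width is strictly smaller than $j_{u_t}-i_{u_t}$ (resp.\ $j_{v_t}-i_{v_t}$), so the width comparison yields no contradiction at all. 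The paper closes this with a dichotomy: if $\alpha\notin\bigcup_{i<t}J_i$, the minimality in the definition of $J_t$ gives the width contradiction; if $\alpha\in J_\nu$ with $\nu<t$, then $i_\alpha\ge i_{u_\nu}$ (as $u_\nu=\min J_\nu$), so $i_{u_\nu}\le a_p$, contradicting the minimality of $t$ in $t=\min\{\alpha'\mid i_{u_{\alpha'}}\le a_p\}$. In the $K$-phase the second branch is subtler still: one first needs $a_{p+1}>n+j_{v_{t-1}}$, which is deduced from the strict increase of the widths $j_{v_t}-i_{v_t}$ along \Cref{alg_Ksets}, and then $\alpha\in K_\nu$ forces $a_{p+1}\le n+j_\alpha\le n+j_{v_\nu}\le n+j_{v_{t-1}}$ (using $v_\nu=\max K_\nu$), a contradiction. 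None of these ideas appear in your sketch, and without them the core cases do not go through.

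Your treatment of the wrap edge $\{a_1,a_m\}$ would also fail as written. You propose to force $a_m-a_1>n+r$, but $a_m=n+j_B\le n+r$ and $a_1\ge 1$, so this inequality can never hold; and even the weaker goal of exceeding every width $n+(j_t-i_t)$ is unattainable in general. For instance, for a chain with $E(G_r)=\{\{1,3\},\{1,6\},\{5,7\}\}$ and $r=7$ (which satisfies both standing hypotheses), Case II of \Cref{constr_inducedanticycle} gives $a_1=4$ and $a_m=n+7$, so $a_m-a_1=n+3\le n+5=n+(j_2-i_2)$: the width test does not exclude $\Delta((1,6),n)$, even though $(a_1,a_m)$ indeed lies in no $\Delta_t$. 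The correct argument is the one you already used for $\{a_{m-1},a_m\}$, and it is how the paper disposes of both end edges at once: if $(a_1,a_m)\in\Delta_t$, then $a_m-j_t\le n$ forces $j_t=j_B$, hence $t\ge b$ and $i_t\ge i_b>a_1$, contradicting $a_1\ge i_t$. With that replacement, and with the dichotomy above actually supplied, your outline would match the paper's proof.
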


\begin{proof}
Since $i_B+n<a_{m-1}<a_m=j_B +n$ and $i_q\le a_1<i_b$, we get $\{a_{m-1},a_m\}$, $\{a_1,a_m\}\notin E(G_{n+r})$. Now assume that $p \in \{1,\ldots, m-2\}$. 

\noindent \textsc{Case 1: $a_p < i_h$.} In this case, $a_{p+1}$ is determined from $a_p$ by Algorithm~\ref{alg:anticycle_initialvertices}. Hence $1\le p\le d$ and $a_p\le a_d<i_h.$ Set 
\begin{align}\label{Eq_defi_t}
t=\min\{\alpha\in\{1,\ldots,\beta\} \mid i_{u_\alpha}\le a_p\}.
\end{align}
It follows that $t\ge 2 $ and $i_{u_t}\le a_p<i_{u_{a}}$ for all $1\le a\le t-1$. Moreover, we see that \[a_{p+1}-a_p=j_{u_t}-i_{u_t}-1<j_{u_t}-i_{u_t}.\] 
Now if there exists an integer $\alpha\in\{1,\ldots,s\}$  such that $(a_p,a_{p+1})\in \Delta_\alpha$ then  $i_{\alpha} \le a_p<i_{u_{t-1}}$ and $j_\alpha -i_\alpha\le a_{p+1}-a_p < j_{u_t}-i_{u_t}.$   By the definitions of $u_t$ and $J_t$, we must have 
\[
\alpha\in \cup_{i=1}^{t-1}J_i,
\]
which implies $\alpha\in J_\nu$ for some $\nu=1,\ldots,t-1$. Thus, $a_p\ge i_\alpha \ge i_{u_\nu}$.
By \eqref{Eq_defi_t}, we get $\nu \ge t$, a contradiction. This means $(a_p,a_{p+1})\notin \Delta_\alpha$ for all $\alpha=1,\ldots,s$.

\noindent \textsc{Case 2: $a_p\ge i_h$.} In this case, $a_{p+1}$ is determined from $a_p$ by Algorithm \ref{alg:anticycle_finalvertices}. Set 
 \begin{align}\label{Eq_defi_t2}
 t=\min\{\alpha\in\{1,\ldots,\gamma\} \mid a_p\le n+i_{v_\alpha}\}.
 \end{align}
 It follows that $t\ge 1$ and $a_{p+1}-a_p=j_{v_t}-i_{v_t}-1<j_{v_t}-i_{v_t}.$ If $t=1$ then
 \[
 a_{p+1}-a_p<j_{H}-i_{H}\le j_\alpha -i_\alpha
 \]
for all $\alpha=1,\ldots,s$. Hence $(a_p,a_{p+1})\notin \Delta_\alpha$ for all $\alpha=1,\ldots,s$. Now we consider the case where $t\ge 2$. By \eqref{Eq_defi_t2}, we get $n+i_{v_{t-1}}<a_p\le n+i_{v_t}$ and
\[
a_{p+1}=a_p+j_{v_t}-i_{v_t}-1\le n+i_{v_t}+ j_{v_t}-i_{v_t}-1=n+j_{v_t}-1<n+j_{v_t}.
\]
Furthermore, 
\[
a_{p+1}=a_p+j_{v_t}-i_{v_t}-1> n+i_{v_{t-1}}+j_{v_t}-i_{v_t}-1.
\]
Since $j_{v_t}-i_{v_t}>j_{v_{t-1}}-i_{v_{t-1}}$ per \Cref{constr_Jsets}, one has
\begin{align}\label{Eq_defi_t3}
a_{p+1}\ge  n+i_{v_{t-1}}+j_{v_t}-i_{v_t}>n+j_{v_{t-1}}.
\end{align}
Now if there exists an integer $\alpha\in\{1,\ldots,s\}$  such that $(a_p,a_{p+1})\in \Delta_\alpha$ then  
\[
n+j_{v_{t-1}}<a_{p+1}\le n+j_\alpha,
\]
which implies $j_\alpha>j_{v_{t-1}}.$
Combining with the fact that 
\[
j_{v_t}-i_{v_t}>a_{p+1}-a_p\ge j_\alpha-i_\alpha
\]
and the definitions of $K_t$ and $v_t$, we must have $\alpha\in \cup_{i=1}^{t-1}K_i$. This implies that $\alpha\in K_\nu$ for some $1\le \nu \le t-1$. As a consequence,
\begin{equation}
\label{eq_ineq_ap+1}
a_{p+1}\le n+j_\alpha\le n+j_{v_\nu}\le n+j_{v_{t-1}}.
\end{equation}
The third inequality in the last  chain holds because of \Cref{constr_Jsets}. For the second inequality, note that $j_\alpha-i_\alpha=j_{v_\nu}-i_{v_\nu}$, as $\alpha, v_\nu \in K_\nu$. By definition, $v_\nu= \max K_\nu\ge \alpha$, hence
\[
j_{v_\nu}-j_\alpha=i_{v_\nu}-i_\alpha \ge 0,
\]
as desired.

Now \eqref{eq_ineq_ap+1} contradicts the inequality \eqref{Eq_defi_t3}. So $(a_p,a_{p+1})\notin \Delta_\alpha$ for all $\alpha=1,\ldots,s$. 
\end{proof}

\begin{lem}
\label{lem_non_consecutivevert}
With the notations as in \Cref{constr_inducedanticycle}, we have $\{a_p,a_z\}\in E(G_{n+r})$ for any $1\le p<z\le m$ such that $z-p\ge 2$ and $(p,z)\ne (1,m)$.
\end{lem}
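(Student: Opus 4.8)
The plan is to reduce, via \Cref{lem_integralpoints}, to the following concrete task: since the sequence $a_1<a_2<\dots<a_m$ produced by \Cref{constr_inducedanticycle} is strictly increasing (so that $(a_p,a_z)^{\le}=(a_p,a_z)$), it suffices, for each pair $(p,z)$ with $1\le p<z\le m$, $z-p\ge 2$ and $(p,z)\ne(1,m)$, to exhibit an index $t\in\{1,\dots,s\}$ with $(a_p,a_z)\in\Delta_t$, i.e.\ with
\[
i_t\le a_p,\qquad j_t-i_t\le a_z-a_p,\qquad a_z\le n+j_t .
\]
For bookkeeping I would record, for each regular step $k\in\{1,\dots,m-2\}$, the edge index $c_k\in\{1,\dots,s\}$ used at that step, so that $a_{k+1}-a_k=j_{c_k}-i_{c_k}-1$; by the hypothesis $\min_t(j_t-i_t)\ge 2$ every such increment is $\ge 1$, and by construction $i_{c_k}\le a_k$ when step $k$ lies in the initial block (\Cref{alg:anticycle_initialvertices}), while $a_k\le n+i_{c_k}$ when it lies in the final block (\Cref{alg:anticycle_finalvertices}).

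The proof then runs through a case analysis, choosing for each pair a triangle from a short list of candidates: $\Delta_{c_p}$, $\Delta_{c_{z-1}}$, the extremal triangle $\Delta_q$ (available precisely because of the hypothesis $\max\{j_1,\dots,j_s\}=j_q+1$, so that $i_q=i_1=\min_t i_t$ and $j_q=\max_t j_t-1$), and $\Delta_b$, $\Delta_B$. The two ``slack'' inequalities are the easy part: for $t=c_p$ one has $j_{c_p}-i_{c_p}=(a_{p+1}-a_p)+1\le a_z-a_p$ (using $z\ge p+2$ and increments $\ge1$); for $t=c_{z-1}$ with $z\le m-1$ one has $j_{c_{z-1}}-i_{c_{z-1}}\le a_z-a_p$ similarly and $a_z=a_{z-1}+(j_{c_{z-1}}-i_{c_{z-1}}-1)<n+j_{c_{z-1}}$ from $a_{z-1}\le n+i_{c_{z-1}}$; and for $t=q$ one has $a_z\le n+j_B-1=n+j_q$ whenever $z\le m-1$. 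Concretely: when $z\le d+1$ (both legs in the initial block) take $\Delta_{c_p}$, using $a_z\le a_{d+1}<2r\le n$ to get the upper bound for free; when $z\ge d+2$ with $z\le m-1$, take $\Delta_q$ once the gap $a_z-a_p$ is large enough, and fall back on $\Delta_{c_p}$ when the gap is small (forcing $a_z<n$ again); and when $z=m$, where $a_m=n+j_B$ forces $j_t=j_B$ and hence $t\in\{b,\dots,B\}$, take $\Delta_b$ if $a_p\le n+i_b$ and $\Delta_B$ otherwise, observing that $a_p>n+i_b\ge n\ge 2r> i_B$ then gives $i_B\le a_p$ while $a_p\le a_{m-2}\le n+i_B$ gives the remaining inequality.

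The genuinely delicate point — and the step I expect to be the main obstacle — is verifying the corner-membership $i_t\le a_p$ for the chosen triangle, especially for pairs straddling the two blocks and for pairs with small $p$ but large $a_z$: here $a_p$ can be as small as $i_q$, so one is forced onto triangles with very small first coordinate, and one must still control $j_t-i_t$ against $a_z-a_p$. This is exactly where the full strength of Constructions \ref{constr_Jsets} and \ref{constr_Ksets} is used — the values $j_{u_\alpha}-i_{u_\alpha}$ and $j_{v_\alpha}-i_{v_\alpha}$ increase strictly along each block, the first coordinates $i_{u_\alpha}$ decrease along the initial block, and the second coordinates $j_{v_\alpha}$ increase along the final block — together with the position bounds recorded after \Cref{constr_inducedanticycle} ($i_q\le a_1<i_b\le a_2$, $a_d<i_h\le a_{d+1}<2r$, and $a_{m-2}\le n+i_B<a_{m-1}<a_m=n+j_B$) and the running hypothesis $n\ge 2r$ to absorb all error terms of size $O(r)$. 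Once every admissible pair is covered, Lemmas \ref{lem_length_inducedanticycle}, \ref{lem_consecutivevertices} and this lemma together show that $a_1,\dots,a_m$ form an induced anticycle $C_m^c$ of length $m\ge 4$ in $G_{n+r}$, which is the input needed for the proof of \Cref{prop_reg3_specialcase1}.
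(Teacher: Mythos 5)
Your overall architecture matches the paper's proof of this lemma: for each admissible pair $(p,z)$ one exhibits a single triangle $\Delta_t$ with $i_t\le a_p$, $j_t-i_t\le a_z-a_p$, and $a_z\le n+j_t$, drawing on the monotonicity built into Constructions \ref{constr_Jsets}--\ref{constr_Ksets}. Your cases $z\le d+1$ and $z=m$ are correct and essentially the paper's. For $d+2\le z\le m-1$, your dichotomy ``$\Delta_q$ if the gap is large, else $\Delta_{c_p}$'' does go through when $a_p<i_h$ (i.e.\ $p$ in the initial block): there $a_p<2r$, so a gap below $j_q-i_q\le r$ really does force $a_z<2r\le n$, and otherwise $a_z-a_p>r>j_q-i_q$.

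The genuine gap is the remaining subcase $a_p\ge i_h$, $z\le m-1$, which you flag as ``the main obstacle'' but do not carry out, and where your stated dichotomy breaks. There $a_p$ can sit near $n+i_B$, so ``gap small'' no longer yields $a_z<n$; and $\Delta_{c_p}$ controls only $a_{p+1}\le n+j_{c_p}$, never $a_z$, once $z>p+1$. Replacing $\Delta_{c_p}$ by $\Delta_{c_{z-1}}=\Delta_{v_\mu}$ (listed in your preamble but never used in the case split) repairs the upper bound $a_z<n+j_{v_\mu}$, but then the corner inequality $i_{v_\mu}\le a_p$ is exactly what can fail; and when it does, $\Delta_q$ is not a valid fallback either, since $j_q-i_q=j_B-1-i_q$ may exceed $j_b-i_b$ (take $i_q$ much smaller than $i_b$), so a gap of size $j_b-i_b$ fits $\Delta_b$ but not $\Delta_q$. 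The paper closes this in its Step 2, Case 2 by a further split: if $a_p\ge i_{v_\mu}$ use $\Delta_{v_\mu}$; otherwise use $\Delta_b$ when $a_z\ge j_b-i_b+i_{v_\mu}$; and when $a_z<j_b-i_b+i_{v_\mu}$ it first shows $\mu=1$ (since $\mu\ge 2$ would force $a_z>a_{z-1}>n\ge 2r$) and then falls back on $\Delta_h$ — a candidate missing from your list. So the framework and bookkeeping are sound, but the proposal stops one essential case distinction short, and the parenthetical ``forcing $a_z<n$ again'' is false in exactly the range where it is relied upon.
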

\begin{proof}
 We proceed through two steps.

\noindent \textsf{Step 1}: $\{a_1,a_z\}\in E(G_{n+r})$ for all $3\le z\le m-1.$

Indeed, $a_{m-1} < a_m=n+j_B=n+j_q+1$, thanks to the hypothesis, hence $a_z \le a_{m-1} \le n+j_q$.

We first consider the case where $i_b\le i_h$. Then thanks to Algorithm \ref{alg:anticycle_initialvertices}, $i_{u_{\beta}}\le a_1$. If $a_z\le j_{u_{\beta}}+n$, then $(a_1,a_z)\in \Delta_{u_{\beta}}$, as the following chain holds
\[
0\le a_1-i_{u_{\beta}}\le a_z-j_{u_{\beta}}\le n.
\]
In this chain, the middle inequality  holds because thanks to Equation \eqref{eq_a2-a1},
$$
a_z - a_1=a_z-a_2+a_2-a_1\ge 1+a_2-a_1 = j_{u_{\beta}}-i_{u_{\beta}}.
$$ 
If $a_z > n+j_{u_\beta}$,  then 
$$
j_q+n\ge a_z>j_{u_{\beta}}+n\ge 2r\ge i_b+j_q-i_q > a_1+j_q-i_q\ge j_q,
$$ 
hence $(a_1,a_z)\in \Delta_q$ because the following chains holds
\[
0\le a_1-i_{q}\le a_z-j_{q}\le n.
\]
In other words, $\{a_1,a_z\}\in E(G_{n+r})$ for all $3\le z\le m-1$ in this case. 

Next consider the case $i_h<i_b$. We show that $\{a_1,a_z\}\in E(G_{n+r})$ for any $3\le z\le m-1$  by going along the same lines by replacing ${u_{\beta}}$ by $u_1=h$. We leave the details to the interested reader. This completes the proof of Step 1.

\noindent \textsf{Step 2}: $\{a_p,a_z\}\in E(G_{n+r})$ for all $2\le p < z\le m$ and $z-p\ge 2$.

Observe that $a_p\ge i_b$. We consider two cases.

\noindent \textsc{Case 1: $a_p < i_h$.} Again let 
\[
t=\min\{\alpha\in\{1,\ldots,\beta\} \mid i_{u_\alpha}\le a_p\},
\]
then $t\ge 2$ as $u_1=h$. We have $i_{u_t}\le a_p<i_{u_{t-1}}$ and $a_{p+1}-a_p=j_{u_t}-i_{u_t}-1$ by Algorithm \ref{alg:anticycle_initialvertices}. If $a_z\le j_{u_t}+n$ then
\[
a_z-a_p =  a_z- a_{p+1} + a_{p+1}-a_p\ge j_{u_t}-i_{u_t},
\]
which gives $0\le a_p-i_{u_t}\le a_z-j_{u_t}\le n$, i.e. $(a_p,a_z)\in \Delta_{u_t}.$ On the other hand, if $a_z>j_{u_t}+n$, then we claim $(a_p,a_z)\in \Delta_{b}$. Indeed, since $a_z>2r\ge i_{u_{t-1}}+j_b-i_b>a_p+j_b-i_b$ and  $a_z\le j_b+n$, the  chain 
\[
0\le a_p-i_b \le a_z -j_b \le n
\]
is valid.

\noindent \textsc{Case 2: $a_p \ge i_h$.}  If $z=m$ then we have that $(a_p,a_m)\in \Delta_b\cup \Delta_B$, as $i_b\le a_p\le i_B+n$ for all $2\le p\le m-2$. More precisely, it can be checked that if $a_p \ge i_B$ then $(a_p,a_m) \in \Delta_B$, while if $a_p < i_B$ then $(a_p,a_m) \in \Delta_b$. If $z\le m-1$, then as
\[
a_{z-1} \le a_{m-2} \le n+i_B=n+i_{v_\gamma} \quad \text{(per \Cref{constr_inducedanticycle} and Equation \eqref{eq_vgammaB})},
\]
we may let 
\[
\mu=\min\{\alpha\in\{1,\ldots,\gamma\}\mid a_{z-1}\le i_{v_\alpha}+n\}.
\]
Then $a_{z-1}\le i_{v_\mu}+n$ and by Algorithm \ref{alg:anticycle_finalvertices}, $a_z-a_{z-1}=j_{v_\mu}-i_{v_\mu}-1$. Therefore
\[
a_z-a_p=a_z-a_{z-1}+a_{z-1}-a_p\ge j_{v_\mu}-i_{v_\mu}
\]
and
\[
a_z=a_{z-1}+j_{v_\mu}-i_{v_\mu}-1\le j_{v_\mu}+n.
\]
It follows that if $a_p\ge i_{v_\mu}$, then $(a_p,a_z)\in\Delta_{v_\mu}$, since the following chain holds
\[
0\le a_p-i_{v_\mu}\le a_z-j_{v_\mu}\le n.
\]
Finally, it remains to consider the case $i_b\le a_p<  i_{v_\mu}$.  If $a_z\ge j_b -i_b +i_{v_\mu}$, then
\[
a_z-a_p >a_z-i_{v_\mu}\ge j_b -i_b,
\]
which yields 
\[
0\le a_p-i_{b}\le a_z-j_{b}\le n.
\]
In other words, $(a_p,a_z)\in \Delta_b$.

Assume that $a_z< j_b -i_b +i_{v_\mu}$. In this case, the definition of $\mu$ implies $\mu=1$, hence
\[
a_z-a_p=a_z-a_{z-1}+a_{z-1}-a_p\ge j_H-i_H=j_h-i_h.
\]
Since $n\ge 2r\ge j_b -i_b +i_{v_\mu}-j_h>a_z-j_h,$  the following chain holds
\[
0\le a_p-i_{h}\le a_z-j_{h}\le n, \; \text{i.e,}\; (a_p,a_z)\in \Delta_{h}.
\]
This also finishes the proof of the lemma.
\end{proof}

\begin{proof}[Proof of \Cref{prop_reg3_specialcase1}]
By Fr\"oberg's theorem, we have to show that $G_{n+r}$ is not cochordal for all $n\ge 2r$. It suffices to construct an induced anticycle $C_m^c$ in $G_{n+r}$, for some $m\ge 4$. Keep using notations as in \Cref{constr_inducedanticycle}. We wish to show that the vertices $a_1,\ldots, a_{m}$ in \Cref{constr_inducedanticycle} form an induced anticycle in the graph $G_{n+r}$. 

This follows from combining Lemmas \ref{lem_length_inducedanticycle}, \ref{lem_consecutivevertices}, and \ref{lem_non_consecutivevert}. Therefore $\reg I_n\ge 3$ for all $n\ge 3r$.
\end{proof}

\section{A sharp regularity bound}
\label{sect_sharpbound}

The next theorem gives our first major result on the regularity of invariant chains of edge ideals.
\begin{thm}
\label{thm_reg<=3}
 Let $\Icc=(I_n)_{n\ge 1}$ be an $\Inc$-invariant chain of eventually nonzero edge ideals. Let $r=\ind(\Icc)\ge 1$. Then for all $n\ge 4r$, there is an inequality $\reg I_n\le 3$.
\end{thm}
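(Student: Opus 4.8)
The plan is to bound $\reg I_n$ by repeated use of the deletion inequality \Cref{lem_reg_edgeideal_deletion}, i.e.\ to exhibit a \emph{cochordal filtration} of the graph $G_n$: an ordering $v_1,v_2,\ldots$ of (part of) the vertices such that, writing $H_0=G_n$ and $H_k=H_{k-1}\setminus v_k$, every \emph{link} $H_{k-1}\setminus N_{H_{k-1}}[v_k]$ is cochordal and the process ends at a cochordal (or edgeless) graph. Granting this, descending induction on $k$ together with \Cref{lem_reg_edgeideal_deletion}, Fr\"oberg's theorem (\Cref{lem_reg_indmatch}(2)) and the trivial bound for edgeless graphs gives $\reg I(H_{k-1})\le\max\{2+1,\,\reg I(H_k)\}\le 3$, hence $\reg I_n\le 3$. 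In practice it is enough — and this is the form I would verify — to write $E(G_n)=E(\Gamma_1)\cup E(\Gamma_2)$ with $\Gamma_1,\Gamma_2$ cochordal graphs on $[n]$: taking $v$ to be a simplicial vertex of $\Gamma_1^c$, the set $[n]\setminus N_{\Gamma_1}[v]$ is an independent set of $\Gamma_1$, so $G_n\setminus N_{G_n}[v]$ is an induced subgraph of $\Gamma_2$, hence cochordal, while $G_n\setminus v$ is again covered by two cochordal graphs; one then inducts on the number of vertices.

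To construct such a cover I would work in the combinatorial picture of \Cref{lem_integralpoints}: for $n=N+r$ the edges of $G_n$ are the pairs $\{u,v\}$ with $(u,v)^\le$ lying in the union of the triangles $\Delta_t=\Delta((i_t,j_t),N)$, $t=1,\ldots,s$. Vertices below $i_1$ and above $\Msupp(I_n)$ are isolated and may be deleted first (their links are edgeless), so I may assume $i_1=1$ and that $n$ is the top vertex. The key structural fact is that $G_n$ is \emph{cochordal in the middle}: with $d=\min_t(j_t-i_t)$, whenever $r<u<v\le N$ one has $\{u,v\}\in E(G_n)\iff v-u\ge d$, and the complement of this ``universal'' graph is the $(d-1)$-th power of a path, hence an interval graph, hence chordal. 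Thus $G_n$ departs from the universal middle graph only through boundary effects near the bottom vertices $1,\ldots,\approx r$ and near the top vertices $\approx n-r,\ldots,n$; as \Cref{ex_not_weaklychordal} illustrates, these boundary effects can genuinely destroy cochordality, and they do so exactly via ``missing'' edges joining a small vertex to a large one.

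The filtration I would therefore use is to peel off the bottom vertices $1,2,\ldots,K$ one by one, where $K$ is essentially the second smallest of $i_1,\ldots,i_s$ (so $K\le r$). After removing $1,\ldots,k-1$, the triangles based on those columns are truncated on the left, and the link $H_{k-1}\setminus N_{H_{k-1}}[k]$ becomes an induced subgraph of $G_n$ concentrated near the top; such a subgraph is cochordal by the argument in the proof of \Cref{lem_reg_quasi-saturated} (an induced anticycle in its complement would, by $\Inc$-invariance, force a forbidden edge upon pushing the minimal vertex). Once the bottom boundary is gone, the residual graph is the universal middle graph together with the top boundary, and a further check — again in the spirit of \Cref{lem_reg_quasi-saturated}, using that the surviving top ``defects'' never join two vertices of comparable size — shows it is cochordal. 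Balancing how many vertices must be peeled (at most $r$ from the bottom and at most $r$ isolated ones from the top) against how much ``middle'' the above arguments require yields the stated threshold $n\ge 4r$.

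The main obstacle is precisely the hidden content of the previous paragraph: proving, uniformly for $n\ge 4r$, that each link along the filtration and the residual graph are cochordal. This demands a careful description of which edges survive inside the truncated triangles — the point where the fine combinatorics of $\Inc$-invariant chains of edge ideals really enters — and it is cleanest to split according to whether $j_q=\max_t j_t$, i.e.\ whether the global maximum among the $j_t$ is attained by an edge incident to $i_1$: in that case one shows, as in \Cref{lem_reg_quasi-saturated}, that $G_n$ itself is cochordal and $\reg I_n=2$; otherwise a nontrivial filtration is unavoidable and the defect edges exemplified by \Cref{ex_not_weaklychordal} must be handled explicitly.
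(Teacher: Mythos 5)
Your opening reduction is fine (iterating \Cref{lem_reg_edgeideal_deletion} along a filtration with cochordal links, or equivalently covering $E(G_n)$ by two cochordal graphs, does give $\reg I_n\le 3$), and this is indeed the paper's overall strategy. But the whole content of the theorem is in exhibiting such a filtration, and the one you commit to — peeling the \emph{smallest} vertices $1,2,\ldots,K$ — does not work. The link of a bottom vertex is not ``concentrated near the top'': the non-neighbours of $i_1$ include every vertex strictly between $i_1$ and $j_1$, and edges nested inside that interval survive alongside edges sitting near the top, which together can form an induced $2K_2$. Concretely, take $r=12$, $I_{12}=(x_1x_{10},\,x_3x_5,\,x_{11}x_{12})$ and $I_n=\langle \Inc_{12,n}(I_{12})\rangle_{R_n}$ for $n\ge 12$. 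For $n\ge 4r$ one has $N_{G_n}(1)=\{10,11,\ldots,n-2\}$ (only the orbit of $x_1x_{10}$ meets column $1$), so $3,5,n-1,n$ all lie in the link of $1$; moreover $\{3,5\}$ and $\{n-1,n\}$ are edges (orbits of $x_3x_5$ and $x_{11}x_{12}$), while no cross pair is an edge, since the orbit of $x_1x_{10}$ only reaches up to $n-2$, that of $x_3x_5$ up to $n-7$, and that of $x_{11}x_{12}$ requires smaller endpoint $\ge 11$. So the very first link of your filtration contains an induced $2K_2$, hence is not cochordal, and \Cref{lem_reg_edgeideal_deletion} at that step only yields $\reg\le 4$. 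The same example defeats the second stage as well: after deleting $1,\ldots,K$ the residual graph on $[K+1,n]$ still contains the induced $2K_2$ $\{4,6\},\{n-1,n\}$, so it is not cochordal either, and no reordering of the bottom vertices helps (the link of $2$ contains the same $2K_2$). Your alternative route via a cover $E(G_n)=E(\Gamma_1)\cup E(\Gamma_2)$ is never instantiated: the ``universal middle graph plus boundary defects'' description does not by itself produce two cochordal pieces, precisely because bottom defects and top defects together are what create induced $2K_2$'s.

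The justification you lean on is also unavailable: the argument of \Cref{lem_reg_quasi-saturated} needs quasi-saturatedness to pull a monomial from $I_{n+c}$ back into $I_n$ after pushing an index, and for general chains this fails — the edge set of $G_n$ is not stable under shifting indices inside $[n]$ (cf.\ \Cref{ex_not_weaklychordal}, where $G_n$ itself carries arbitrarily long induced anticycles). The paper's proof makes the opposite, and crucial, choice of peeling direction: it deletes $v_1$, a largest neighbour of the largest vertex $u_1$ occurring as the smaller endpoint of an edge. Then $N[v_1]$ swallows the whole block $\{i_\ell,i_\ell+1,\ldots,u_1\}$, so every edge surviving in the link has its smaller endpoint below $i_\ell\le r$; this anchoring is what excludes induced $2K_2$'s (via \Cref{lem_twopoints_induced2K2}) and induced $C_m^c$ with $m\ge 5$ (via \Cref{lem_twopoints_corners}), and proving it is the substance of \Cref{lem_cochordal_filtration}. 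That step is missing from your proposal, and in the form you state it (bottom peeling plus the quasi-saturated argument) it is not merely unproved but false.
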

The key to this theorem is the following
\begin{lem}
\label{lem_cochordal_filtration}
Keep using notations of \Cref{thm_reg<=3}. Let $G_n$ be the graph corresponding to $I_n$. Let $H$ be an induced subgraph of $G_n$ with at least one edge. Set
\[
u_1=\max\{u\in V(H)\mid \text{for some $u<v, v\in V(H)$, we have} ~ \{u,v\}\in E(H)\}.
\] 
Let $v_1\in V(H)$ be any vertex such that $u_1 <v_1$ and $\{u_1,v_1\} \in E(H).$ Then for every $n\geq 4r$ and every choice of $H$ and $v_1$ as above, the graph $H\setminus N_H[v_1]$ is cochordal.
\end{lem}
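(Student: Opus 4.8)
The plan is to argue by contradiction: suppose that $H\setminus N_H[v_1]$ is not cochordal, so that its complement contains an induced cycle of length $\ge 4$. By Fröberg's theorem (\Cref{lem_reg_indmatch}(2)) this is the same as saying that $H\setminus N_H[v_1]$ contains an induced anticycle $C_m^c$ with $m\ge 4$, with consecutive vertices $a_1,\ldots,a_m$, which we may relabel so that $a_1=\min\{a_1,\ldots,a_m\}$. Since $a_1v_1$ is \emph{not} an edge of $H$ (because $a_1\notin N_H[v_1]$), and $u_1$ is by definition the largest left-endpoint of any edge of $H$, we have $a_i<v_1$ for every vertex $a_i$ in the anticycle as soon as we know $a_i\le u_1$; more importantly the defining maximality of $u_1$ forces every edge of $H$, hence in particular every edge inside the anticycle, to have its smaller endpoint $\le u_1$. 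The first key step is therefore to record the structural constraints this imposes: all the $a_i$ with $i\ge 2$ that serve as the smaller vertex of some edge of the anticycle are $\le u_1<v_1$, and $v_1\notin\{a_1,\ldots,a_m\}$.

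Next I would import the triangle dictionary. Writing $n'=n-r\ge 3r$ and $E(G_r)=\{\{i_1,j_1\},\ldots,\{i_s,j_s\}\}$ with $1\le i_t<j_t\le r$, \Cref{lem_integralpoints} identifies the edges of $G_n$ with the integral points of $\bigcup_{t=1}^s\Delta_t$ where $\Delta_t=\Delta((i_t,j_t),n')$. Applying \Cref{lem_inducedanticyc} (valid because $n'=n-r\ge 3r\ge 2r$, so we are in the regime $n'\ge 2r$ of that lemma) to the induced subgraph $H$ of $G_n$ — or rather to the induced anticycle inside it — yields that $\max\{a_1,\ldots,a_m\}=\max\{a_2,a_m\}$. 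The second key step is to combine this with the maximality of $u_1$: on one hand $\max\{a_2,a_m\}$ is an \emph{endpoint of an edge of the anticycle} (each of $a_2,a_m$ is non-adjacent only to $a_1$ and one of its two cyclic neighbors, so each lies on at least one anticycle edge once $m\ge 4$; when $m=4$ one checks the two endpoints of $C_4^c=2K_2$ directly), so it is $\le u_1<v_1$; on the other hand, since $a_1v_1, a_2v_1,\ldots,a_mv_1$ are all non-edges of $H$ while $u_1v_1$ is an edge, the vertex $u_1$ itself cannot lie in $N_H[v_1]$'s complement together with being adjacent to $v_1$ — this is automatic — but the point is that $v_1>u_1\ge\max\{a_i\}$ forces $v_1$ to lie strictly to the right of the entire anticycle.

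The main obstacle — and the heart of the argument — is to derive a contradiction from the configuration "the anticycle sits entirely to the left of $v_1$, and $u_1v_1\in E(H)\subseteq E(G_n)$". The idea is that because $u_1\le \max\{a_2,a_m\}$, the vertex $u_1$ can be absorbed into the anticycle: the pair $\{u_1,v_1\}$ corresponds to an integral point of some $\Delta_t$, and an $\Inc$-shift argument (exactly of the type used repeatedly via \Cref{lem_twopoints_corners} and in the proof of \Cref{lem_inducedanticyc}, shifting the right endpoint down) shows that then $\{a_k,v_1\}\in E(G_n)$ for the vertex $a_k$ achieving $\max\{a_2,a_m\}$, because $a_k\le u_1<v_1$ and $v_1-a_k\le v_1-i_t$ fits inside the same triangle after translating. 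Since $a_k\in V(H)$ and $v_1\in V(H)$, this gives $\{a_k,v_1\}\in E(H)$, contradicting $a_k\notin N_H[v_1]$. I expect the delicate bookkeeping to be: (i) justifying that when $m=4$ the two vertices of $C_4^c\cong 2K_2$ are both edge-endpoints so the appeal to $u_1$ is legitimate; (ii) the quantitative $\Inc$-shift showing $\{a_k,v_1\}$ lands in a triangle $\Delta_t$ — here one needs $n\ge 4r$ (equivalently $n'\ge 3r$) so that the vertical slack $v_1-j_t\le n'$ is available even when $v_1$ is as large as $n$; this is precisely where the hypothesis $n\ge 4r$, rather than $n\ge 3r$, is used. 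Once the contradiction is reached, $H\setminus N_H[v_1]$ has no induced $C_m^c$ for any $m\ge 4$, hence is cochordal, completing the proof.
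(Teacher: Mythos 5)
The central step in your plan is broken. You claim that $\max\{a_2,a_m\}$ (which by \Cref{lem_inducedanticyc} equals $\max\{a_1,\ldots,a_m\}$) is $\le u_1$ on the grounds that it is ``an endpoint of an edge of the anticycle''. But $u_1$ is, by definition, the maximum over the \emph{smaller} endpoints of edges of $H$; being an edge-endpoint does not put a vertex below $u_1$. Since $\max\{a_1,\ldots,a_m\}$ is the largest vertex of the anticycle, whenever it sits on an anticycle edge it is the \emph{larger} endpoint, so the maximality of $u_1$ says nothing about it. Indeed, in the situation at hand the paper shows (Case I of its proof) that $a_m>n+j_{13}$, i.e.\ $\max\{a_i\}$ is close to $n+r$, and there is no reason at all for $u_1$ to be that large. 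Thus the conclusion ``$v_1$ lies strictly to the right of the entire anticycle'' does not follow, and the rest of your argument (the $\Inc$-shift absorption) has nothing to stand on. Even if you did know $a_k\le u_1$, the shift argument for $(a_k,v_1)\in\Delta_t$ would additionally need $a_k\ge i_t$, which you do not justify.

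What the paper actually uses is a different, sharper consequence of the definition of $u_1$. From $(u_1,v_1)\in\Delta_\ell$ one sees that \emph{all} of $i_\ell,i_\ell+1,\ldots,u_1$ lie in $N_{G_{n+r}}[v_1]$, so after deleting $N_H[v_1]$ the only surviving vertices below $u_1$ are $\le i_\ell-1$. Combined with the maximality of $u_1$ (no vertex above $u_1$ is a smaller endpoint of an edge of $H$), this yields the key inequality \eqref{eq_ineq_smallu}: every edge of $H\setminus N_H[v_1]$ has its smaller endpoint $\le i_\ell-1<r$. The paper then rules out an induced $2K_2$ via \Cref{lem_twopoints_induced2K2} (because it would force a smaller endpoint to exceed $n$), and for $m\ge 5$ repeats the case analysis of \Cref{prop_noCmc}, using \Cref{lem_twopoints_corners} to show that a specific \emph{smaller} endpoint ($a_{m-2}$ in Case I.1, $a_2$ in Case I.2) would have to exceed roughly $n-2r$, contradicting the bound $\le i_\ell-1$. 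You will also need to treat $m=4$ separately along these lines; your parenthetical remark about checking $C_4^c$ directly is not a substitute for the $2K_2$ argument, which genuinely requires \Cref{lem_twopoints_induced2K2} rather than \Cref{lem_inducedanticyc}.
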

\begin{proof}
Assume the contrary that for some $n\ge 3r$ and some induced subgraph $H$ of $G_{n+r}$ with $E(H)\neq \emptyset$, the graph  $H\setminus N_H[v_1]$ is not cochordal.

Assume that $E(G_r)=\{\{i_1,j_1\},\ldots,\{i_s,j_s\}\}$ where $s\ge 1$, $1\le i_p < j_p\le r$ for all $p=1,\ldots,s$. For each $1\le t\le s$, denote $\Delta_t=\Delta((i_t,j_t),n)$ for simplicity. Therefore, 
$$E(H)\subseteq E(G_{n+r})=\{\{u,v\}: (u,v)^\le \in \cup_{t=1}^s\Delta_t\}.$$
Hence there exists an integer $\ell\in\{1,\ldots,s\}$ such that $(u_1,v_1)\in\Delta_\ell$. Observe that $(i_\ell+j, v_1) \in \Delta_\ell$ for all $0\le j \le u_1-i_\ell$. Hence
\[
\{v_1, i_\ell, i_\ell+1,\ldots,u_1\} \subseteq N_{G_{n+r}}[v_1].
\]
Since $H$ is an induced subgraph of $G_{n+r}$, it holds that 
$$
N_H[v_1]=V(H) \cap N_{G_{n+r}}[v_1].
$$
Therefore $V(H\setminus N_H[v_1])\subseteq \{1,2,\ldots,i_\ell-1\}\cup V_1$, where $i_\ell+j\notin V_1$ for all $0\le j \le u_1-i_\ell$, in particular, if $u\in V_1$ then $u> u_1$. If follows from the maximality of $u_1$ in the hypothesis that:
\begin{equation}
\label{eq_ineq_smallu}
\text{If $u < v$ and $\{u,v\} \in E(H\setminus N_H[v_1])$ then $u\leq i_\ell-1$}. 
\end{equation}

\textsf{Step 1:} We claim that $H\setminus N_H[v_1]$ has no induced $2K_2$. 

Note that $H\setminus N_H[v_1]$ is also an induced subgraph of $G_{n+r}$. By Lemma~\ref{lem_twopoints_induced2K2}, the graph $H\setminus N_H[v_1]$ has no induced $2K_2$. Indeed, assume that $\{u_2,v_2\}, \{u_3,v_3\}$ is an induced $2K_2$ of $H\setminus N_H[v_1]$ where $u_p<v_p$. Then \Cref{lem_twopoints_induced2K2} implies that for some non-negative number $j_{2s}$, $u_3 > n+j_{2s} >n$. This contradicts \eqref{eq_ineq_smallu} and the fact that $n\ge 3r >i_\ell$.

\textsf{Step 2:} Since $H\setminus N_H[v_1]$ is not cochordal, and yet does not contained any induced $2K_2= C_4^c$, it necessarily contains an induced anticycle $C_m^c$, where $m\ge 5$. Label the consecutive vertices of $C_m^c$ as $a_1,\ldots,a_m$ (following the terminology of \Cref{subsect_edgeideals}). By reindexing, we may assume that $a_1=\min\{a_p: 1\le p\le m\}$. Let $a_{m+1}=a_1$.

For each $1\le p <q\le m$ such that $q-p\ge 2, (p,q)\neq (1,m)$, it holds that $\{a_p,a_q\} \in E(H\setminus N_H[v_1])$, so $(a_p,a_q)^\le\in \Delta_{pq}$, where $\Delta_{pq}=\Delta((i_{pq},j_{pq}),n)$ is one of the sets $\Delta_1,\ldots,\Delta_s$. As in the proof of \Cref{lem_inducedanticyc}, we have

\noindent\textbf{Claim 1:}  For all $1\le t\le m$, none of the points $(a_t,a_{t+1}), (a_{t+1},a_t)$ belongs to 
$$
\Gamma= \mathop{\bigcup_{1\le p< q\le s}}_{q-p\ge 2, (p,q)\neq (1,m)} \Delta_{pq}.
$$

Consider two cases according to whether $a_2\le a_m$ or $a_m< a_2$. 

\noindent \textsf{\bf Case I:} $a_2\le a_m$. Thanks to \Cref{lem_inducedanticyc}, we get $a_m=\max\{a_p: 1\le p\le m\}$. 

Apply \Cref{lem_twopoints_corners} for the points $(a_1,a_3)\in \Delta_{13}$ and $(a_2,a_m) \in \Delta_{2m}$. The conclusion is that $a_1< i_{2m}$ and
\begin{equation}
\label{eq_ineq_a{m}large}
a_3 \le j_{13}+n < a_m.
\end{equation}

\noindent\textsf{Case I.1:} $a_2 \le a_{m-1}$. By Claim 1, $(a_{m-1},a_m)\notin \Delta_{2m}$, hence the following chain is false
\[
0\le a_{m-1}-i_{2m} \le a_m-j_{2m} \le n.
\]
The first inequality in the chain holds since $a_{m-1}\ge a_2\ge i_{2m}$, and the last inequality also holds since $a_m\le j_{2m}+n$. Hence the middle inequality is false, namely
\begin{equation}
\label{eq_ineq_a{m_1}large}
a_{m-1}>a_m+i_{2m}-j_{2m}.
\end{equation}
By Claim 1, $(a_{m-2},a_{m-1})\notin \Delta_{1(m-1)}$, hence the following chain is false
\[
0\le a_{m-2}-i_{1(m-1)} \le a_{m-1}-j_{1(m-1)} \le n.
\]
The first inequality in the chain holds since $a_{m-2}\ge a_1\ge i_{1(m-1)}$, and the last inequality also holds since $a_{m-1}\le j_{1(m-1)}+n$. Hence the middle inequality is false, namely
\begin{equation}
\label{eq_ineq_a{m_2}large}
a_{m-2}>a_{m-1}+i_{1(m-1)}-j_{1(m-1)}.
\end{equation}
Combining \eqref{eq_ineq_a{m}large}, \eqref{eq_ineq_a{m_1}large}, and \eqref{eq_ineq_a{m_2}large},  we obtain
\begin{align*}
a_{m-2}>n+j_{13}+i_{2m}-j_{2m}+i_{1(m-1)}-j_{1(m-1)}.
\end{align*}
Since $a_{m-2} \le a_m$, $a_{m-2}\neq a_m$, and $\{a_{m-2},a_m\} \in E(H\setminus N_H[v_1])$, we obtain $a_{m-2}\le i_\ell-1$ by \eqref{eq_ineq_smallu}. Thus
\[
i_\ell-1 \ge a_{m-2}>n+j_{13}+i_{2m}-j_{2m}+i_{1(m-1)}-j_{1(m-1)},
\]
so that 
$$
n< i_\ell-1 -j_{13}-i_{2m}+j_{2m}-i_{1(m-1)}+j_{1(m-1)}\le 3r,
$$
where the last inequality follows from $i_\ell < j_\ell \le \max\{j_1,\ldots,j_s\}\le r$. This contradicts the hypothesis $n\ge 3r.$

\noindent\textsf{Case I.2:} $a_2 > a_{m-1}$. Looking at the points $(a_1,a_{m-2})=(a_1,a_{m-2})^\le \in \Delta_{1(m-2)}$ and $(a_{m-1},a_2)=(a_2,a_{m-1})^\le \in \Delta_{2(m-1)}$. Note that $a_1 \le a_{m-1}$, and $(a_1,a_2)$, $(a_{m-1},a_{m-2})\notin \Delta_{1(m-2)} \cup \Delta_{2(m-1)}$ thanks to Claim 1, so arguing as above using \Cref{lem_twopoints_corners}, we deduce $i_{1(m-2)} < i_{2(m-1)}.$ Moreover, the same result yields 
\begin{equation}
\label{eq_ineq_a_{2}large}
a_2>n+j_{1(m-2)}.
\end{equation}
Since $\{a_2,a_m\} \in E(H\setminus N_H[v_1])$, $a_2\le a_m$ and $a_2\neq a_m$, we get  from \eqref{eq_ineq_smallu} that $a_2 \le i_\ell-1$. Combining with \eqref{eq_ineq_a_{2}large},  we obtain
\[
i_\ell -1 \ge a_2>n+j_{1(m-2)},
\]
hence 
$$
n<i_\ell-1-j_{1(m-2)} < r.
$$
This contradicts the hypothesis $n\ge 3r.$

\noindent \textsf{\bf Case II:} $a_2> a_m$. Relabel the vertices as follows: $b_1=a_1, b_i=a_{m+2-i}$ for $2\le i\le m$. Then $b_1=\min\{b_1,\ldots,b_m\}$ and $b_2 < b_m$. Arguing as for Case I, we finish Case II and conclude that $H\setminus N_H[v_1]$ is cochordal. The proof is completed.
\end{proof}

\begin{proof}[Proof of \Cref{thm_reg<=3}]
For a graph $H$, by abuse of notation, write $\reg H$ for the regularity of the ideal $I(H)$. Let $n\ge 3r$ be an integer. We prove that if $H$ is an induced subgraph of $G_{n+r}$, then $\reg H\le 3$.  Induction on the cardinality of $E(H)$.

If $E(H)=\emptyset$ then $\reg H= \reg (0) =-\infty$. Assume that $E(H)\neq \emptyset$. Let $u_1$ be the maximal value of $u\in V(H)$ such that for some $u<v, v\in V(H)$, $\{u,v\}$ belongs to $E(H)$. Let $v_1\in V(H)$ be a neighbor of $u_1$ in $H$ such that $v_1>u_1$. By \Cref{lem_cochordal_filtration} and Fr\"oberg's theorem, $\reg(H\setminus N_H[v_1]) \le 2$. This together with \Cref{lem_reg_edgeideal_deletion}, yields the chain
\[
\reg(H) \le \max\{\reg (H\setminus N_H[v_1])+1, \reg (H\setminus v_1)\} \le \max\{3, \reg (H\setminus v_1)\}.
\]
Clearly $\{u_1,v_1\} \in E(H) \setminus E(H\setminus v_1)$. Also $H\setminus v_1$ is an induced subgraph of $G_{n+r}$, hence by the induction hypothesis $\reg (H\setminus v_1)\le 3$. Therefore $\reg(H) \le 3$. This finishes the induction and choosing $H=G_{n+r}$, we get the desired conclusion.
\end{proof}

\section{Convergence}
\label{sect_converge}

Now we state our main accomplishment in this paper. In the following statement, we employ the $q$-invariant introduced in \Cref{defn_q-invariant}. 
\begin{thm}
\label{thm_convergence}
Let $\Icc=(I_n)_{n\ge 1}$ be an $\Inc$-invariant chain of eventually nonzero edge ideals. Keep using \Cref{notn_chainofedgeids}. Then for all $n\ge N=\max\{5r, 2r(r-2), 4(r+q(I_r))\}$, there is an equality
\[
\reg I_n = \reg I_{n+1},
\]
and moreover $\lim\limits_{n\to \infty} \reg I_n \in \{2,3\}$.

In addition, the following statements are equivalent
\begin{enumerate}[\quad \rm (1)]
 \item $\reg I_n=2$ for all $n\gg 0$;
 \item $\reg I_N=2$;
 \item  Either $j_q = \max\{j_1,\ldots,j_s\}$, or it holds that $\min \{j_t-i_t: t=1,\ldots,s\}=1$ and $\indmatch(G_{3r})=1$.
\end{enumerate}
\end{thm}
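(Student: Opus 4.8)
The plan is to reduce the whole statement to the single combinatorial equivalence
\[
(\star)\qquad \text{for all }n\geq N:\quad G_n\text{ is cochordal}\iff\text{condition (3) holds},
\]
and then to establish $(\star)$ by a case analysis on the pair $(j_q,\indmatch(G_{3r}))$. To see that $(\star)$ suffices, set $p=\Msupp I_r=\max\{j_1,\dots,j_s\}$ and $\mu=\indmatch(G_{3r})$, which by \Cref{thm_no3K2} equals $\indmatch(G_n)$ for every $n\geq 3r$. By \Cref{thm_reg<=3} and the trivial bound $\reg I_n\geq 2$ (valid since $I_n$ is a nonzero edge ideal for $n\geq r$), we get $\reg I_n\in\{2,3\}$ for all $n\geq 4r$, in particular for all $n\geq N$; and by Fr\"oberg's theorem (\Cref{lem_reg_indmatch}(2)) $\reg I_n=2$ exactly when $G_n$ is cochordal, i.e.\ exactly when $G_n$ contains neither an induced $2K_2=C_4^c$ nor an induced anticycle $C_m^c$ with $m\geq 5$. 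Since (3) does not involve $n$, $(\star)$ forces $(\reg I_n)_{n\geq N}$ to be constantly $2$ when (3) holds and constantly $3$ otherwise; this delivers at once the equality $\reg I_n=\reg I_{n+1}$ for $n\geq N$, the existence of $\lim_{n\to\infty}\reg I_n\in\{2,3\}$, and the equivalences $(1)\Leftrightarrow(2)\Leftrightarrow(3)$.

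For the direction $\neg(3)\Rightarrow G_n$ not cochordal, note that $\neg(3)$ means $j_q<p$ together with ``$\mu=2$ or $\min\{j_t-i_t\}\geq 2$''. If $\mu=2$ then $G_n$ has an induced $2K_2$ for all $n\geq 3r$, so it is not cochordal. So assume $\mu=1$ and $\min\{j_t-i_t\}\geq 2$ (and $j_q<p$); we must produce an induced $C_m^c$ with $m\geq 5$ in $G_n$ for $n\geq N$. When $p=j_q+1$ this is precisely \Cref{prop_reg3_specialcase1}. When $p\geq j_q+2$, I would induct on $p$ over all invariant chains of edge ideals: passing to the intersection chain $\Jcc=(J_n)$ of \Cref{prop_intersection_chain} (stability index $r+1$, whose defining graph has strictly smaller maximal endpoint $p'<p$, which again has $\min\{j_t-i_t\}\geq 2$, and which is eventually nonzero because $j_q<p$), \Cref{lem_reg_mon} applied with the variable $x_{\Msupp I_n}$ together with \Cref{prop_intersection_chain}(iv) gives $\reg I_n\geq\reg(I_n,x_{\Msupp I_n})=\reg J_n$, so it suffices that $\reg J_n\geq 3$, i.e.\ that the graph of $J_n$ is not cochordal for $n$ large; one then verifies that $\Jcc$ lands in one of the already-handled situations. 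Since the $q$-invariant strictly drops at each such step (\Cref{lem_q-invariant}) and $\ind\Jcc=r+1$, the recursion terminates after at most $q(I_r)$ steps, which is why $N$ carries the term $4(r+q(I_r))$.

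For the direction $(3)\Rightarrow G_n$ cochordal for $n\geq N$, first suppose $j_q=p$, so that $G_r$ contains the ``all-embracing'' edge $\{i_1,p\}$, whose $\Inc$-orbit produces every edge $\{u,v\}$ of $G_n$ with $i_1\leq u$, $v\leq n-r+p=\Msupp I_n$ and $v-u\geq p-i_1$; a direct computation with \Cref{lem_integralpoints} and \Cref{lem_inducedanticyc}, in the spirit of the proof of \Cref{lem_reg_quasi-saturated}, shows that for $n\geq 3r$ the graph $G_n$ has no induced $2K_2$ and no induced $C_m^c$ with $m\geq 5$. Otherwise $\min\{j_t-i_t\}=1$ and $\mu=1$: the condition $\mu=1$ already forbids induced $2K_2$'s for $n\geq 3r$, while for anticycles one uses that the $\Inc$-orbit of a ``consecutive'' edge $\{i_\ast,i_\ast+1\}\in E(G_r)$ makes the induced subgraph of $G_n$ on the vertex set $\{i_\ast,i_\ast+1,\dots,n-r+i_\ast+1\}$ a complete graph; hence in any induced $C_m^c$ of $G_n$ no two cycle-consecutive vertices both lie in that range, and since this range omits at most $p-2\leq r-2$ of the non-isolated vertices of $G_n$, a counting argument (the in-range anticycle vertices form an independent set in $C_m$) forces $m\leq 2(r-2)$, so that \Cref{prop_noCmc} excludes such a $C_m^c$ as soon as $n\geq 2r(r-2)$ (together with $n\geq 5r$, which disposes of the easy range $5\leq m\leq n/r$). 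Thus $G_n$ is cochordal for $n\geq N$ whenever (3) holds, completing $(\star)$ and hence the theorem.

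The genuine obstacle is the subcase $p\geq j_q+2$ of $\neg(3)$: the inequality $\reg I_n\geq\reg J_n$ is lossy exactly when the intersection chain $\Jcc$ drifts into the regime where its analogue of $j_q$ equals its analogue of $p$ and $\reg J_n$ drops to $2$. Organising the induction around the $q$-invariant, keeping track of how the data $(i_1,j_q,p,\mu)$ and the disconnected graph $G_n\setminus N_{G_n}[\Msupp I_n]$ evolve under one step of \Cref{prop_intersection_chain}, and verifying that the recursion always bottoms out at \Cref{prop_reg3_specialcase1}, at the case $\mu=2$, or at the case $j_q=p$ with the regularity landing on the correct side, is the crux of the argument; I expect this bookkeeping, rather than any single inequality, to be the main technical hurdle.
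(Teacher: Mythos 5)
Your overall architecture coincides with the paper's (Fr\"oberg plus \Cref{thm_reg<=3} and \Cref{thm_no3K2} reduce everything to deciding cochordality of $G_n$ for large $n$; the three subcases of (3) are handled exactly as in \Cref{prop_reg2_sufficient_consecutivevertices}, \Cref{prop_reg2_all-embracinginterval}, and \Cref{prop_reg3_specialcase1}), but the proposal has a genuine gap precisely where you yourself flag the ``main technical hurdle'': the subcase $p\ge j_q+2$ of $\neg(3)$, which is the content of the paper's \Cref{thm_reg3}. Two things are missing or wrong there. First, your claim that the intersection chain $\Jcc$ of \Cref{prop_intersection_chain} has ``strictly smaller maximal endpoint $p'<p$'' is false: by \Cref{prop_intersection_chain} one has $\Msupp(J_{r+1})=\Msupp(\langle I_r\rangle_{R_{r+1}})=p$, and indeed the edges $\{i_t,p\}$ with $j_t=p$ survive in $H_{r+1}$. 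The quantity that actually decreases is the $q$-invariant, and its \emph{strict} decrease is not given by \Cref{lem_q-invariant} alone: that lemma only yields $q(J_{r+1})\le q(I_r)$ with equality iff $J_{r+1}=\langle I_r\rangle_{R_{r+1}}$, i.e.\ iff $\Icc$ is quasi-saturated. The paper rules this out by a separate argument: quasi-saturatedness together with $x_{i_1}x_{j_q}\in I_r$ and $j_q+1\le p$ forces $x_{i_1}x_{j_q+1}\in I_{r+1}\cap R_p=I_r\cap R_p$, contradicting the maximality of $j_q$. Without this step your recursion has no guaranteed termination.

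Second, and more importantly, the verification that $\Jcc$ ``lands in one of the already-handled situations'' is exactly the point you leave open, and it is where the real work lies. The paper's proof computes $E(H_{r+1})$ explicitly (the Observation in the proof of \Cref{thm_reg3}): every edge $\{i_t,j_t\}$ with $j_t<p$ contributes $\{i_t,j_t\},\{i_t,j_t+1\},\{i_t+1,j_t+1\}$, while edges with $j_t=p$ persist unchanged. From this one reads off that the minimal gap stays $\ge 2$, the new analogue of $j_q$ equals $j_q+1$, and the new maximal endpoint is still $p$; since $p\ge j_q+2$, the new chain again satisfies the hypotheses of \Cref{thm_reg3}, so the induction on $q(\cdot)$ never ``drifts'' into the regime $j_q=p$ where $\reg J_n$ would drop to $2$ (your stated worry), and it bottoms out at \Cref{prop_reg3_specialcase1} when the new analogue of $p-j_q$ reaches $1$. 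This bookkeeping is not optional fine print: without it the inequality $\reg I_n\ge\reg(I_n,x_{n-r+p})=\reg J_n$ proves nothing, since $\reg J_n$ could a priori be $2$. So the proposal is structurally on the paper's track but does not constitute a proof of the remaining implication $\neg(3)\Rightarrow\lim\reg I_n=3$ in the case $p\ge j_q+2$.
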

\begin{rem}
\label{rem_Nr}
We note that the number $N=\max\{ 5r, 2r(r-2), 4(r+q(I_r))\}$ in \Cref{thm_convergence} can be replaced by a constant depending only on $r$. Indeed, let $p=\Msupp I_r \le r$, since $I_r$ is quadratic and nonzero, by \Cref{defn_q-invariant},
\[
q(I_r)= \sum_{i=0}^2 \dim_\kk \left(\frac{R_p}{I_r\cap R_p}\right)_i \le 1+p+\binom{p+1}{2}-1 \le \dfrac{r^2+3r}{2}.
\]
Therefore $4(r+q(I_r))\le 2(r^2+5r)$, and thus $N\le 2(r^2+5r)$. Hence we can replace $N$ by $2(r^2+5r)$ in the statement of \Cref{thm_convergence}. While we do not know the optimal (minimal) value for the number $N$ such that $\reg I_n=\reg I_{n+1}$ for all $n\ge N$,  \Cref{rem_Nr_lowerbound} below shows that this value has to be at least $3r-9$.
\end{rem}

For the proof of \Cref{thm_convergence}, we start in Propositions \ref{prop_reg2_sufficient_consecutivevertices} and \ref{prop_reg2_all-embracinginterval} with the sufficient conditions for the equality $\reg I_n=2$ to hold for all $n\gg 0$.
\begin{prop}
\label{prop_reg2_sufficient_consecutivevertices}
 Keep using \Cref{notn_chainofedgeids}. If $\indmatch(G_{3r})=1$  and $\min \{j_p-i_p: p=1,\ldots,s\}=1$, then $\reg I_n=2$ for all $n\ge \max\{5r, 2r(r-2)\}$.
\end{prop}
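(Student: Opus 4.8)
The plan is to show that the graph $G_n$ is cochordal for all $n \ge N_0 := \max\{5r, 2r(r-2)\}$; since $I_n \neq 0$ for $n \ge r$, Fr\"oberg's theorem (\Cref{lem_reg_indmatch}(2)) then gives $\reg I_n = 2$. Equivalently, I must prove that $G_n^c$ has no induced cycle of length $\ge 4$. The hypothesis $\min\{j_p - i_p : p = 1,\ldots,s\} = 1$ provides an edge $\{i_t, j_t\}$ of $G_r$ with $j_t = i_t + 1$, so $1 \le i_t \le r-1$. The first step is to extract two book-keeping consequences from \Cref{lem_integralpoints}. \emph{(a)} For every $n \ge r$ the set of consecutive integers $C := \{i_t, i_t+1, \ldots, i_t + n - r + 1\} = [i_t, j_t + n - r] \cap \Z$ induces a complete subgraph of $G_n$: for $u < v$ in $C$ one checks at once that $(u,v) \in \Delta((i_t, i_t+1), n-r)$. \emph{(b)} Let $V_{\mathrm{act}}$ denote the set of vertices of $G_n$ lying on some edge and put $P = \Msupp I_r \le r$. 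By \Cref{lem_integralpoints}, $V_{\mathrm{act}} \subseteq \bigcup_{p=1}^s [i_p, j_p + n - r] \subseteq [i_1, P + n - r]$, and since $i_1 \le i_t$ and $j_t \le P$, a direct count of integer points gives
\[
|V_{\mathrm{act}} \setminus C| \le (i_t - i_1) + (P - i_t - 1) = P - i_1 - 1 \le r - 2 .
\]
In words: away from a single interval on which $G_n$ is a clique, there are at most $r-2$ non-isolated vertices of $G_n$.

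Next I would run the following dichotomy. Suppose $G_n^c$ has an induced cycle with consecutively labelled vertices $a_1,\ldots,a_m$, $m \ge 4$, and $n \ge N_0$. If $m = 4$, then $\{a_1,\ldots,a_4\}$ induces $C_4^c = 2K_2$ in $G_n$, which is impossible because $\indmatch(G_n) = \indmatch(G_{3r}) = 1$ by \Cref{thm_no3K2} (applicable as $n \ge 3r$). If $m \ge 5$, observe first that no $a_i$ is isolated in $G_n$: an isolated vertex of $G_n$ is adjacent in $G_n^c$ to every other vertex, hence cannot lie on an induced cycle of length $\ge 4$. So $a_1,\ldots,a_m \in V_{\mathrm{act}}$. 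Since $C$ induces a complete subgraph of $G_n$, it is an independent set of $G_n^c$, hence contains at most $\lfloor m/2 \rfloor$ of the cycle vertices; therefore at least $\lceil m/2 \rceil$ of them lie in $V_{\mathrm{act}} \setminus C$, and \emph{(b)} forces $\lceil m/2 \rceil \le r-2$, i.e.\ $m \le 2r - 4$. On the other hand, \Cref{prop_noCmc} says $G_n^c$ has no induced $C_m$ once $n \ge mr$; as $G_n^c$ does contain one, $n < mr \le (2r-4)r = 2r(r-2) \le N_0 \le n$, a contradiction. Hence $G_n^c$ is chordal for $n \ge N_0$, and by Fr\"oberg's theorem $\reg I_n = 2$ there.

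I expect the only delicate points to be the two claims \emph{(a)} and \emph{(b)} drawn from \Cref{lem_integralpoints}: that a ``diagonal'' edge $\{i_t, i_t+1\}$ of $G_r$ propagates to a complete subgraph of $G_n$ supported on a full interval of about $n$ consecutive vertices, and --- more importantly for the numerics --- that the complement of this interval contains at most $r-2$ non-isolated vertices. It is this sharp count (rather than a naive $2(r-2)$) that produces the threshold $2r(r-2)$ instead of $4r(r-2)$, and it rests on the interval $C$ reaching all the way up to $j_t + n - r$ while $\Msupp I_r \le r$. Everything else is a routine combination of \Cref{thm_no3K2}, \Cref{prop_noCmc} and Fr\"oberg's theorem.
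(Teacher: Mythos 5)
Your proof is correct and follows essentially the same route as the paper: the diagonal edge $\{i_t,i_t+1\}$ yields a clique on an interval of $n-r+2$ consecutive vertices, an induced anticycle can meet this clique in at most half of its vertices, and combining this count with \Cref{thm_no3K2} (for $m=4$) and \Cref{prop_noCmc} (for $m\ge 5$) forces the contradiction at the stated threshold. Your extra restriction to non-isolated vertices is harmless but unnecessary: since the clique already has $n-r+2$ of the $n$ vertices, the paper simply counts $|[n]\setminus A|=r-2$, so the ``sharp count'' you single out as delicate is in fact automatic.
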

\begin{proof}
By Fr\"oberg's theorem, we have to show that $G_n$ is cochordal for all $n \ge \max\{5r, 2r(r-2)\}$. 

By the hypothesis, $I_n$ is a nonzero edge ideal for all $n\ge r$. Thanks to \Cref{thm_no3K2}, $\indmatch(G_n)=1$ for all $n\ge 3r$. And hence $G_n$ does not contain any induced $2K_2$ for all $n\ge 3r$. Take $n\ge \max\{5r, 2r(r-2)\}$. We show that $G_n$ is cochordal.

It suffices to prove that $G_n$ does not contain any induced $C_m^c$ for $m\ge 5$. Assume the contrary, that for some $m\ge 5$ and some $n\ge \max\{5r, 2r(r-2)\}$, $G_n$ contain an induced $C_m^c$. By \Cref{prop_noCmc}, 
\begin{equation}
 \label{eq_ineq_mlarge}
 m> \dfrac{n}{r} \ge \max\{5, 2(r-2)\}.
\end{equation}
By the hypothesis, for some $1\le p\le s$, we have $j_p=i_p+1$. Since
$$
\Delta((i_p,i_p+1),n-r)=\{(u,v): i_p\le u<v \le i_p+1+n-r\},
$$
we deduce that $A=\{i_p,i_p+1,\ldots,i_p+n-r+1\}$ is a clique in $G_n$. Note that $|V(G_n\setminus A)| \le n-(n-r+2)=r-2$.

Denote the consecutive vertices of the induced $C_m^c$ as a subgraph of $G_n$ by $a_1,a_2,\ldots,a_m$. Denote $a_{m+1}=a_1$. So $a_i, a_{i+1}$ are not adjacent for all $1\le i\le m$. Since $A$ is a clique and $C_m^c$ is an induced subgraph of $G_n$, this implies $\{a_1,\ldots,a_m\} \cap A$ has at most $m/2$ elements. Hence
\[
m = |\{a_1,\ldots,a_m\} \cap A| + |\{a_1,\ldots,a_m\} \setminus A| \le \dfrac{m}{2} + |V(G_n\setminus A)| \le \dfrac{m}{2}+(r-2).
\]
In particular, $m\le 2(r-2)$, contradicting \eqref{eq_ineq_mlarge}. Therefore the above assumption is wrong, and $G_n$ is cochordal for all $n\ge \max\{5r, 2r(r-2)\}$. This concludes the proof.
\end{proof}

\begin{prop}
\label{prop_reg2_all-embracinginterval}
Keep using \Cref{notn_chainofedgeids}. If $j_q=\max\{j_1,\ldots,j_s\}$ then $\reg I_n=2$ for all $n\ge 3r$.
\end{prop}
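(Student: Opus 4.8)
The plan is to invoke Fr\"oberg's theorem (\Cref{lem_reg_indmatch}): since $I_n$ is a nonzero edge ideal for every $n\ge r$, it suffices to prove that $G_n$ is cochordal for all $n\ge 3r$. Writing $n=N+r$ with $N\ge 2r$, I would argue that $G_{N+r}$ contains no induced anticycle $C_m^c$ with $m\ge 4$. The decisive object is the ``all-embracing'' edge $\{i_1,j_q\}$ of $G_r$ (it is genuinely an edge, since $i_q=i_1$ by \Cref{notn_chainofedgeids}) and its triangle $T:=\Delta((i_1,j_q),N)$. The two hypotheses of the proposition translate, via \Cref{lem_integralpoints}, into the statement that among the triangles $\Delta((i_p,j_p),N)$ cutting out $E(G_{N+r})$, the triangle $T$ starts furthest to the left ($i_1\le i_p$) and reaches highest ($j_p\le j_q$).

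First I would record a localization observation: every point of $\Delta((i_p,j_p),N)$ has smaller coordinate $\ge i_p\ge i_1$ and larger coordinate $\le j_p+N\le j_q+N$, so any vertex of $G_{N+r}$ outside the interval $\{i_1,\dots,j_q+N\}$ is isolated; hence an induced $C_m^c$ with $m\ge 4$, whose vertices all have positive degree, can only use vertices of that interval. Now suppose $a_1,\dots,a_m$ ($m\ge 4$) are the consecutive vertices of such an anticycle with $a_1=\min\{a_1,\dots,a_m\}$ and, reversing the cyclic order if needed, with $a_2\le a_m$; then $i_1\le a_1$ and $a_m\le j_q+N$. Since $\{a_1,a_3\}$ and $\{a_2,a_m\}$ are non-consecutive (hence edges of $G_{N+r}$) while $\{a_1,a_m\}$ and $\{a_2,a_3\}$ are consecutive (hence non-edges), I can apply \Cref{lem_twopoints_corners} to the triangles $\Delta((i',j'),N)\ni(a_1,a_3)$ and $\Delta((i'',j''),N)\ni(a_2,a_m)$ coming from $G_r$ — the hypothesis $N\ge 2r$ covers the bound required there, which is at most $2(j_\alpha-i_\alpha+i_\beta-j_\beta)\le 2(r-2)$ for edges of $G_r$. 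If $i'\ge i''$ the lemma forces $a_2<i'\le a_1\le a_2$, impossible; so $i'<i''$, and the lemma then gives $a_1<i''\le r$ together with $a_m>j'+N\ge N+1$.

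To close the argument I would use that $\{a_1,a_m\}$ is a non-edge, so $(a_1,a_m)\notin T$; since $i_1\le a_1<a_m\le j_q+N$, the only way membership in $T$ can fail is $a_1-i_1>a_m-j_q$, that is, $a_m<a_1+(j_q-i_1)<r+r=2r$, contradicting $a_m>N+1\ge 2r+1$. This completes the contradiction, so $G_{N+r}$ is cochordal, and hence $\reg I_n=2$ for all $n\ge 3r$. I expect the only real difficulty to be the conceptual step in the middle: recognizing that the single triangle $T$ attached to $\{i_1,j_q\}$ governs both the leftmost and the topmost parts of the configuration at once, which is precisely what squeezes the extremal vertices $a_1$ and $a_m$ into incompatible ranges; once \Cref{lem_twopoints_corners} is in hand, the rest is routine bookkeeping with coordinates of the right isosceles triangles.
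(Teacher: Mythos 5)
Your proposal is correct and follows essentially the same route as the paper: reduce via Fröberg's theorem, use the localization $i_1\le a_i\le j_q+n$ coming from the hypothesis $j_q=\max\{j_1,\ldots,j_s\}$, apply \Cref{lem_twopoints_corners} to the points $(a_1,a_3)$ and $(a_2,a_m)$ to force $a_1<r$ and $a_m>n+j'$, and then contradict these bounds using the non-edge $(a_1,a_m)\notin\Delta((i_1,j_q),n)$. The only differences are cosmetic (you bound $a_m$ from both sides rather than extracting an upper bound on $n$), so the argument matches the paper's proof.
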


\begin{proof}
By Fr\"oberg's theorem, we have to show that $G_{n+r}$ is cochordal for all $n\ge 2r$. Assume the contrary, that for some $m\ge 4$ and some $n\ge 2r$, $G_{n+r}$ contain an induced $C_m^c$.
	
Label the consecutive vertices of the induced $C_m^c$ by $a_1,a_2,\ldots,a_m$ in such a way that $a_1=\min\{a_1,\ldots,a_m\}.$

We may assume that $a_2\le a_m$ (when $a_2< a_m$, relabel the vertices as $b_1=a_1, b_i=a_{m+2-i}$ for $2\le i\le m$). 

By the hypothesis $j_q=\max\{j_1,\ldots,j_s\}$ and $i_1=\cdots=i_q=\min\{i_1,\ldots,i_s\}$, one has $i_q\le a_i\le j_q+n$ for  $i=1,\ldots,m$. Since $(a_1,a_m)\notin \Delta_q=\Delta((i_q,j_q),n)$,  following chain is false
\begin{align*}
0\le a_1-i_q\le a_m-j_q\le n.
\end{align*}
It follows that $a_1> a_m+i_q-j_q.$ Assume that $(a_1,a_3)\in \Delta_b, (a_2,a_m)=(a_2,a_m)^\le \in \Delta_c$ and $(a_1,a_{m-1})\in \Delta_d$, with $b,c,d \in \{1,2,\ldots,s\}$ (note that $b=d$ if $m=4$).

 Consider the points $(a_1,a_3)\in \Delta_b$ and $(a_2,a_m)  \in \Delta_c$. Since $\{a_1,a_m\}, \{a_2,a_3\}\notin E(G_{n+r})$, we have $(a_1,a_m), (a_2,a_3)\notin \Delta_b \cup \Delta_c$. Arguing as in \Cref{lem_cochordal_filtration}, using Lemma~\ref{lem_twopoints_corners} for the two points $(a_1,a_3), (a_2,a_m)$ and the fact that $a_1 <a_2$, we get $i_b<i_c$, moreover $a_1<i_c\le r$ and $a_m>n+j_b$. Thus
\[
r>a_1>a_m+i_q-j_q>n+j_b+i_q-j_q,
\]
which implies $n<r+j_q-j_b-i_q<2r$. This contradicts the hypothesis $n\ge 2r.$ Therefore the above assumption is wrong, and $G_{n+r}$ is cochordal for all $n\ge 2r$. This concludes the proof.
\end{proof}
\begin{rem}
\label{rem_Nr_lowerbound}
The lower bound $3r$ in the statement of \Cref{prop_reg2_all-embracinginterval} is nearly sharp. Let $r\ge 5$ be an integer, $(I_n)_{n\ge 1}$ be the invariant chain with stability index $r$ and $I_r=(x_1x_r,x_{r-3}x_{r-1})$. The hypothesis of \Cref{prop_reg2_all-embracinginterval} is clearly satisfied. We claim that $\reg I_{3r-10} \ge 3$. Indeed, it is not hard to check that $\{2r-8,2r-6\}, \{r-4,3r-10\}$ form a $2K_2$ in $G_{3r-10}$, so \Cref{lem_reg_indmatch} implies the desired inequality. Thus the lower bound $3r$ in the statement ``$\reg I_n=2$ for all $n\ge 3r$'' cannot be improved to $3r-10$. 
\end{rem}

Using the results in \Cref{sect_longanticycles}, we complement Propositions \ref{prop_reg2_sufficient_consecutivevertices} and \ref{prop_reg2_all-embracinginterval} by giving a condition guaranteeing that $\reg I_n=3$ for $n\gg 0$. 
\begin{thm}
\label{thm_reg3}
Keep using \Cref{notn_chainofedgeids}. Assume $\max\{j_1,\ldots,j_s\}\ge j_q+1$ and $\min\{j_p-i_p: p=1,\ldots,s\}\ge 2$.  Then for all  $n\ge 4(r+q(I_r))$, there is an equality $\reg(I_n)=3$.
\end{thm}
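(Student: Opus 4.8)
The plan is to prove the two inequalities $\reg I_n\le 3$ and $\reg I_n\ge 3$ separately for $n\ge 4(r+q(I_r))$. The upper bound is immediate: since $4(r+q(I_r))\ge 4r$, \Cref{thm_reg<=3} already gives $\reg I_n\le 3$ in this range, and nothing new is needed. The substance is the lower bound $\reg I_n\ge 3$, which I would prove by induction on the $q$-invariant $q(I_r)$ (a positive integer).

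First, the easy branch of the dichotomy. If $\max\{j_1,\ldots,j_s\}=j_q+1$, then $\Icc$ satisfies the hypotheses of \Cref{prop_reg3_specialcase1}, which gives $\reg I_n\ge 3$ already for all $n\ge 3r$, hence for all $n\ge 4(r+q(I_r))$; no induction is needed here. So assume from now on that $\max\{j_1,\ldots,j_s\}\ge j_q+2$. Put $p=\Msupp(I_r)$, and note that $p=\max\{j_1,\ldots,j_s\}$ because $I_r$ is quadratic and each $x_{i_t}x_{j_t}$ has $\Msupp=j_t$. Let $\Jcc=(J_n)_{n\ge 1}$ be the chain attached to $\Icc$ by \Cref{prop_intersection_chain}; thus $\Jcc$ is an $\Inc$-invariant chain of eventually nonzero edge ideals with $\ind\Jcc=r+1$, and $\reg J_n=\reg(I_n,x_{n-r+p})$ for all $n\ge r+1$. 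By \Cref{lem_reg_mon} we have $\reg I_n\ge\reg(I_n,x_{n-r+p})=\reg J_n$ for all $n\ge r+1$, so it suffices to bound $\reg J_n$ from below.

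The heart of the argument is to check that $\Jcc$ again satisfies the hypotheses of the theorem, but with strictly smaller $q$-invariant. By \Cref{lem_integralpoints} applied with $n=1$, the edge set of the graph of $J_{r+1}=\langle I_{r+1}\cap R_p\rangle_{R_{r+1}}$ consists of the edges $\{i_t,j_t\}$ for all $t$, together with $\{i_t,j_t+1\}$ and $\{i_t+1,j_t+1\}$ for those $t$ with $j_t<p$. Reading off the ``sorted data'' of \Cref{notn_chainofedgeids} for this graph: the smallest first index is still $i_1$; among edges whose first index equals $i_1$ the largest second index is $j_q+1$ (attained by $\{i_1,j_q+1\}$, which is present since $j_q<p$); the largest second index overall is still $p$; and every edge $\{a,b\}$ satisfies $b-a\ge 2$. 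Hence $\Jcc$ satisfies $\min\{b-a\}\ge 2$, and the analogue of the first hypothesis holds for $\Jcc$ precisely because $p=\max\{j_1,\ldots,j_s\}\ge j_q+2=(j_q+1)+1$. For the $q$-invariant, \Cref{prop_intersection_chain}(i) gives $\langle I_r\rangle_{R_{r+1}}\subseteq J_{r+1}$, and this inclusion is strict: $x_{i_1}x_{j_q+1}\in I_{r+1}\cap R_p\subseteq J_{r+1}$ while $\{i_1,j_q+1\}\notin E(G_r)$ by maximality of $j_q$, so $x_{i_1}x_{j_q+1}\notin\langle I_r\rangle_{R_{r+1}}$. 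Both ideals are quadratic and $\Msupp(\langle I_r\rangle_{R_{r+1}})=p\ge\Msupp(J_{r+1})$, so \Cref{lem_q-invariant}(2) yields
\[
q(I_r)=q(\langle I_r\rangle_{R_{r+1}})>q(J_{r+1})=q(\Jcc).
\]

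Now the induction closes: applying the inductive hypothesis to $\Jcc$ gives $\reg J_n=3$ for all $n\ge 4\bigl((r+1)+q(J_{r+1})\bigr)$, and since $q(J_{r+1})\le q(I_r)-1$ this bound is $\le 4(r+q(I_r))$. Therefore, for $n\ge 4(r+q(I_r))$ we obtain $\reg I_n\ge\reg J_n=3$, which together with the upper bound gives $\reg I_n=3$. The induction is well-founded because $q(I_r)\ge 1$ for every chain under consideration. I expect the main technical effort to be concentrated in the penultimate paragraph: determining the minimal generators of $J_{r+1}$ explicitly and re-deriving the combinatorial data of \Cref{notn_chainofedgeids} for $\Jcc$ — in particular keeping track of the few boundary situations, such as when some $j_t$ equals $p$ or $p-1$ — whereas the remaining steps are routine manipulations with \Cref{lem_reg_mon}, \Cref{lem_q-invariant}, and \Cref{prop_intersection_chain}.
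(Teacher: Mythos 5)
Your proposal is correct and follows essentially the same route as the paper's proof: the dichotomy $\Msupp(I_r)=j_q+1$ (handled by \Cref{prop_reg3_specialcase1} plus \Cref{thm_reg<=3}) versus $\Msupp(I_r)\ge j_q+2$, reduction to the chain $\Jcc$ of \Cref{prop_intersection_chain}, verification that $\Jcc$ inherits the hypotheses, and induction on the strictly decreasing $q$-invariant via \Cref{lem_reg_mon} and \Cref{lem_q-invariant}. The only cosmetic difference is that you prove $q(J_{r+1})<q(I_r)$ directly by exhibiting $x_{i_1}x_{j_q+1}\in J_{r+1}\setminus\langle I_r\rangle_{R_{r+1}}$, whereas the paper argues by contradiction through quasi-saturatedness using \Cref{prop_intersection_chain}(iii).
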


\begin{proof}
The hypothesis implies that $r\ge 4$. We proceed by induction on $q(I_r)\ge 0$. The case $q(I_r)=0$ is vacuous: in that case $I_r=(1)$ and the hypothesis is not satisfied.

Assume that $q(I_r)>0$. Let $p=\Msupp(I_r)=\max\{j_1,\ldots,j_s\}$. If $p=j_q+1$, then by \Cref{prop_reg3_specialcase1}, we deduce that $\reg I_n\ge 3$ for all $n\ge 3r$. Together with \Cref{thm_reg<=3}, we get $\reg I_n= 3$ for all $n\ge 4(r+q(I_r))$.

Now assume that $p\ge j_q+2$.

Let $\Jcc=(J_n)_{n\ge 1}$ be the chain defined in \Cref{prop_intersection_chain}. By the last result, $\ind(\Jcc)=r+1$ and since $\langle I_r \rangle_{R_{r+1}}\subseteq J_{r+1}$ are quadratic monomial ideals with 
$$
\Msupp(J_{r+1})=\Msupp(\langle I_r \rangle_{R_{r+1}})=p,
$$
\Cref{lem_q-invariant} implies that $q(J_{r+1})\le q(\langle I_r \rangle_{R_{r+1}})=q(I_r)$.

We claim that $q(J_{r+1})< q(I_r)$. Assume the contrary, then \Cref{lem_q-invariant} implies that $J_{r+1}=\langle I_r \rangle_{R_{r+1}}$. By \Cref{prop_intersection_chain}, we get $\Icc$ is a quasi-saturated chain. But now $x_{i_1}x_{j_q} \in I_r$, so $x_{i_1}x_{j_q+1} \in I_{r+1}$ by $\Inc$-invariance. Since $\Msupp(I_r)=\max\{j_1,\ldots,j_s\} \ge j_q+1$ and $\Icc$ is quasi-saturated, we deduce
\[
x_{i_1}x_{j_q+1} \in I_{r+1}\cap \kk[x_1,\ldots,x_p]=I_r \cap \kk[x_1,\ldots,x_p].
\]
Thus $x_{i_1}x_{j_q+1} \in I_r$, namely $\{i_1,j_q+1\} \in E(G_r)$. This contradicts the fact that $j_q=\max\{j_t \mid i_t=i_1\}$. Therefore $q(J_{r+1})< q(I_r)$.

Let $H_n$ be the graph corresponding to $J_n$ for each $n$.

\textbf{Observation:} There is an equality
\begin{gather*}
E(H_{r+1})=\left\{\{i_t,j_t\},\{i_t,j_t+1\},\{i_t+1,j_t+1\} \mid 1\le t\le s, j_t <p \right \} \quad \\
 \bigcup \quad \left\{\{i_t, j_t\} \mid 1\le t\le s, j_t=p\right\}.
\end{gather*}
This follows from inspecting the definition
\[
J_{r+1}=\langle I_{r+1} \cap R_p \rangle_{R_{r+1}}
\]
and the fact that $I_{r+1}=\langle \Inc_{r,r+1}(I_r) \rangle_{R_{r+1}}$.

From the Observation and the assumption that $p\ge j_q+2$, we see that the chain $\Jcc$ also satisfies the hypothesis of \Cref{thm_reg3}. In details,
\begin{align*}
 &\min\{v-u \mid u<v, \{u,v\} \in E(H_{r+1})\}  =  \min\{v-u \mid u<v, \{u,v\} \in E(G_r)\} \ge 2,\\
 &\max\{v \mid \{i_1,v\} \in E(H_{r+1})\} = j_q+1,\\
 &\max\{v \mid \text{for some $u<v$ we have $\{u,v\}\in E(H_{r+1})$} \} =p.
\end{align*}
By the induction hypothesis for the chain $\Jcc$, we get for all $n\ge 4(r+1+q(J_{r+1}))$ that
\[
\reg J_n=3.
\]
Since $q(J_{r+1})<q(I_r)$, we have $4(r+q(I_r)) \ge 4(r+1+q(J_{r+1}))$. For each $n\ge 4(r+q(I_r)) \ge \max \{4(r+1+q(J_{r+1})),r+1\}$, we get
\[
\reg I_n \ge \reg(I_n+(x_{n-r+p})) =\reg J_n =3.
\]
The inequality in the chain follows from \Cref{lem_reg_mon}. The first equality in the last chain is due to \Cref{prop_intersection_chain}. Together with \Cref{thm_reg<=3}, we get
\[
\reg I_n=3 \quad \text{for each $n\ge 4(r+q(I_r))$}.
\]
This finishes the induction and the proof.
\end{proof}
We are ready for the 
\begin{proof}[Proof of \Cref{thm_convergence}]
First assume that $\indmatch(G_{3r})=2$. Then by \Cref{thm_no3K2}, we deduce $\indmatch(G_n)=2$ for all $n\ge 3r$. Hence using \Cref{lem_reg_indmatch}, for all such $n$,
\[
\reg I_n \ge 1+\indmatch(G_n)=3.
\]
Together with \Cref{thm_reg<=3}, we get $\reg I_n=3$ for all $n\ge 4r$.

Next assume that $\indmatch(G_{3r})=1$. Then by \Cref{thm_no3K2}, $\indmatch(G_n)=1$ for all $n\ge 3r$.

If  $\min \{j_t-i_t: t=1,\ldots,s\}=1$ then by \Cref{prop_reg2_sufficient_consecutivevertices}, $\reg I_n=2$ for all $n\ge \max\{5r, 2r(r-2)\}$.

If  $j_q = \max\{j_1,\ldots,j_s\}$ then $\reg I_n=2$ for all $n\ge 3r$ by \Cref{prop_reg2_all-embracinginterval}.

It remains to consider the case  $\max\{j_1,\ldots,j_s\}\ge j_q+1$ and $\min \{j_t-i_t: t=1,\ldots,s\}\ge 2$. In this case, by \Cref{thm_reg3}, we deduce that $\reg I_n=3$ for all $n\ge 4(r+q(I_r))$. All the desired assertions follow from the above arguments.
\end{proof}

\section*{Acknowledgments}
This work is partially supported by the Vietnam Academy of Science and Technology (grants CSCL01.01/22-23 and NCXS02.01/22-23). The third author is  supported by the Vietnam Ministry of Education and Training under grant number B2022-DHH-01. Parts of this work were carried out during a stay
of the authors at the Vietnam Institute for Advanced Study in Mathematics (VIASM). They would like to thank VIASM for its hospitality and generous support.

\end{document}